\newtheorem{theorem}{Theorem}[section]
\newtheorem{corollary}[theorem]{Corollary}
\newtheorem{lemma}[theorem]{Lemma}
\newtheorem{proposition}[theorem]{Proposition}
\theoremstyle{definition}
\newtheorem{definition}[theorem]{Definition}
\theoremstyle{remark}
\newtheorem{remark}[theorem]{Remark}
\newtheorem{example}[theorem]{Example}
\newtheorem{notation}[theorem]{Notation}
\numberwithin{equation}{section}
\newcommand{\C}{\mathbb{C}}
\newcommand{\N}{\mathbb{N}}
\newcommand{\R}{\mathbb{R}}
\newcommand{\Z}{\mathbb{Z}}
\newcommand{\ve}{\varepsilon}
\newcommand{\lan}{\langle}
\newcommand{\ran}{\rangle}
\newcommand{\bA}{A}
\newcommand{\bB}{B}
\newcommand{\bC}{C}
\newcommand{\bL}{L}
\newcommand{\bX}{X}
\newcommand{\bY}{Y}
\newcommand{\bM}{M}
\newcommand{\bI}{\mathbf{I}}
\newcommand{\hA}{\hat{\bA}}
\newcommand{\hB}{\hat{\bB}}
\newcommand{\hL}{\hat{\bL}}
\newcommand{\mydet}[1]{\det\left[#1\right]}
\newcommand{\D}[1]{ D\left( #1 \right)}
\newcommand{\myD}[1]{ \tilde{D}\left( #1 \right)}
\newcommand{\col}[2]{\mathrm{col}_{#1}\left[ #2 \right]}
\newcommand{\E}[2]{\mathbb{E}_{#1}\left[ #2 \right] }
\renewcommand{\P}[2]{\mathbb{P}_{#1}\left( #2 \right)}
\newcommand{\AND}{\quad \text{and} \quad}
\newcommand{\CH}{\mathbbm{H}}
\newcommand{\Tr}{\operatorname{Tr}}
\newcommand{\1}{\vec{1}}
\begin{document}

\title
{Improved bounds in Weaver and Feichtinger Conjectures}

\author{Marcin Bownik}

\address{Department of Mathematics, University of Oregon, Eugene,
OR 97403--1222, USA}

\email{mbownik@uoregon.edu}

\author{Pete Casazza}

\address{Department of Mathematics, University of Missouri-Columbia, Columbia, MO 65211, USA}

\email{casazzap@missouri.edu}

\author{Adam W. Marcus}

\address{Department of Mathematics, Princeton University,
Princeton, NJ 08544--1000, USA}

\email{amarcus@princeton.edu}

\author{Darrin Speegle}

\address{Department of Mathematics and Computer Science, Saint Louis 
University, 221 N. Grand Blvd.,
St. Louis, MO 63103, USA}

\email{speegled@slu.edu}

\date{\today}

\thanks{The first author was partially supported by NSF grant DMS-1265711. The second author was supported by
NSF DMS 1609760; NSF ATD 1321779; and AFOSR:  FA9550-11-1-0245.
The fourth author was partially supported by a grant from the
Simons Foundation \#244953. The work on this paper was initiated during the AIM workshop ``Beyond Kadison--Singer: paving and consequences'' on December 1-5, 2014.}


\begin{abstract} 
We sharpen the constant in the $KS_2$ conjecture of Weaver \cite{We} that was given by Marcus, Spielman, and Srivastava \cite{MSS} in their solution of the Kadison--Singer problem. 
We then apply this result to prove optimal asymptotic bounds on the size of partitions in the Feichtinger conjecture.
\end{abstract}
\maketitle


\section{Introduction} 
The goal of this paper is to explore some consequences of the recent resolution \cite{MSS} of the Kadison--Singer problem \cite{KS}. 
The Kadison--Singer problem was known to be equivalent to a large number of problems in analysis such as the Anderson Paving Conjecture \cite{An1, An2, An3}, Bourgain--Tzafriri Restricted Invertibility Conjecture \cite{BT1, BT2, BT3}, Akemann--Anderson Projection Paving Conjecture \cite{AA}, Feichtinger Conjecture \cite{BS, CCLV, Gr}, $R_\epsilon$ Conjecture \cite{CT}, and Weaver Conjecture \cite{We}. 
For an extensive study of problems equivalent to the Kadison--Singer problem we refer to \cite{CE, CFTW, CT}.
Consequently, the breakthrough resolution of the Weaver Conjecture \cite{We} by Marcus, Spielman, and Srivastava \cite{MSS} automatically validates all of these conjectures. 
At the same time, it raises the question of finding optimal quantitive bounds in these problems.

In this paper we shall concentrate on showing quantitative bounds in Weaver and Feichtinger Conjectures. 
The first part of the paper focuses on improving bounds to the conjecture of Weaver known as $KS_2$. The proof of the $KS_2$ conjecture relies on the following result \cite[Theorem 1.4]{MSS}.

\begin{theorem}[Marcus--Spielman--Srivastava]\label{thm:old}
If $\epsilon > 0$ and $v_1, \dots, v_m$ are independent random vectors in $\C^d$ with finite support such that 
\[
\sum_{i=1}^m \E{}{v_{i} v_{i}^{*}} = \bI
\AND
\E{}{ \|v_{i}\|^{2}} \leq \epsilon
\]
for all $i$, then 
\[
\P{}{\left\| \sum_{i=1}^m v_i v_i^{*} \right\| \leq (1 + \sqrt{\epsilon})^2} > 0.
\]
\end{theorem}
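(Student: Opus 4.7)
The plan is to follow the method of interlacing polynomials developed by Marcus, Spielman, and Srivastava. First, since each $v_i$ has finite support, we may enumerate the possible outcomes: for each $i$, let $w_{i,1},\dots,w_{i,N_i}$ be the support of $v_i$, with probabilities $p_{i,1},\dots,p_{i,N_i}$. For each tuple $s=(s_1,\dots,s_m)$ with $1\le s_i\le N_i$, form the characteristic polynomial
\[
q_s(x) = \det\Bigl(xI - \sum_{i=1}^m w_{i,s_i} w_{i,s_i}^*\Bigr),
\]
so that the characteristic polynomial of $\sum_i v_iv_i^*$ equals $q_s$ with probability $\prod_i p_{i,s_i}$. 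The first step is to show that the collection $\{q_s\}$ is an \emph{interlacing family} indexed by the tree of prefixes of $s$. The reduction then follows from the general principle that in an interlacing family, at least one leaf polynomial has largest root bounded by the largest root of the root polynomial $\E{}{q_s}$; hence with positive probability $\bigl\|\sum_i v_iv_i^*\bigr\|$ is at most $\lambda_{\max}\bigl(\E{}{q_s}\bigr)$.

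The second step is to identify this expected polynomial as a mixed characteristic polynomial. Using the rank-one update identity $\det(M - uu^*)=(1-\partial_t)\det(M+tuu^*)\big|_{t=0}$ and independence of the $v_i$'s, one obtains
\[
\E{}{q_s(x)} = \prod_{i=1}^m (1-\partial_{z_i})\, \det\Bigl(xI + \sum_{i=1}^m z_i A_i\Bigr)\bigg|_{z_1=\cdots=z_m=0},
\]
where $A_i = \E{}{v_i v_i^*}$. By hypothesis $\sum_i A_i = \bI$ and $\tr(A_i)=\E{}{\|v_i\|^2}\le \epsilon$.

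The third and most technical step is to bound the largest root of this mixed characteristic polynomial by $(1+\sqrt{\epsilon})^2$. The base polynomial $\det(xI + \sum_i z_i A_i)$ is real stable in $(x,z_1,\dots,z_m)$; each operator $(1-\partial_{z_i})$ preserves real stability, so the result has only real roots in $x$. To track the largest root one uses the barrier function method: for a multivariate real stable $p(x,z)$ with positive leading $x$-coefficient, control the upper barrier functions $\Phi^x_p(x,z) = \partial_x\log p / p$ and $\Phi^{z_i}_p$ on a region of simultaneous ``above the roots'' points. The key quantitative lemma is that if one starts at the point $(x,z_1,\dots,z_m) = (1+\sqrt{\epsilon})^2 + \sqrt{\epsilon},\; 1+1/\sqrt{\epsilon},\dots,1+1/\sqrt{\epsilon})$, then applying one operator $(1-\partial_{z_i})$ while shifting $x$ by an appropriate amount preserves the barrier inequalities, provided $\tr(A_i)\le \epsilon$ and $\sum_i A_i=\bI$. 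Iterating this through all $m$ operators and finally setting $z=0$ yields that the largest $x$-root of $\E{}{q_s}$ is at most $(1+\sqrt{\epsilon})^2$, completing the argument.

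The main obstacle is this third step, namely verifying the barrier-shift inequalities with the sharp constants that produce the bound $(1+\sqrt{\epsilon})^2$ rather than a weaker expression. The interlacing family construction and the mixed-characteristic-polynomial identity are essentially formal once the right framework is in place; by contrast, the barrier-method calculation is delicate because the shifts in $x$ and $z_i$ must be chosen optimally and the inequality $\Phi^{z_i} \le 1 - 1/(\text{shift})$ has to survive every application. This is exactly where the normalization $\sum_i A_i = \bI$ and the trace bound $\tr(A_i)\le\epsilon$ are both used in a sharp way.
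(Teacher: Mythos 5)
The theorem you are asked about is quoted by the paper from \cite{MSS} (it is Theorem 1.4 there); the paper itself only reviews the proof strategy in Sections 1.1 and 4. Your proposal reproduces that same strategy --- interlacing families, the mixed characteristic polynomial identity, and the barrier-function bound --- so the architecture is right. But what you have written is an outline, not a proof: each of the three steps is asserted rather than established. Step 1 requires showing that the polynomials $\{q_s\}$ actually form an interlacing family, which in \cite{MSS} rests on proving that every convex combination of the polynomials at each node of the tree is real-rooted; this in turn requires the real-stability machinery (closure of real stability under $1-\partial_{z_i}$ and under restriction to real values), none of which you supply. Step 2 states the mixed characteristic polynomial identity but does not derive it from the rank-one update formula together with independence.

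The decisive gap is step 3, which you yourself flag as ``the main obstacle'': the entire quantitative content of the theorem lives there, and you give neither the precise barrier lemmas (the analogues of Lemmas 5.9--5.11 of \cite{MSS}, controlling $\Phi$ under the operator $1-\partial_{z_i}$ and under a shift by $\delta$) nor their proofs. Moreover, the reference point you propose does not match the one that makes the computation close: in \cite{MSS} one works with the homogeneous polynomial $\mydet{\sum_i y_i A_i}$, starts at $y=(\epsilon+\sqrt{\epsilon})\1$ where the barrier satisfies $\Phi^j\le \epsilon/t=\sqrt{\epsilon}/(1+\sqrt{\epsilon})=1-1/\delta$ with $\delta=1+\sqrt{\epsilon}$, and the final bound is $t+\delta=(1+\sqrt{\epsilon})^2$; your point $\bigl((1+\sqrt{\epsilon})^2+\sqrt{\epsilon},\,1+1/\sqrt{\epsilon},\dots\bigr)$ for the polynomial $\det(xI+\sum_i z_iA_i)$ does not fit this scheme (at $z=z_0\1$ that polynomial is just $(x+z_0)^d$, so the geometry of ``above the roots'' is not the one you need). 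As it stands the proposal identifies the correct route but proves none of its nontrivial legs.
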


We show the following sharpening of Theorem \ref{thm:old}.

\begin{theorem}\label{thm:new}
If $0 < \epsilon < 1/2$ and $v_1, \dots, v_m$ are independent random vectors in $\C^d$ with support of size $2$ such that 
\begin{equation}\label{new1}
\sum_{i=1}^m \E{}{v_{i} v_{i}^{*}} = \bI
\AND
\E{}{\| v_{i}\|^{2}} \leq \epsilon 
\end{equation}
for all $i$, then 
\begin{equation}\label{new2}
\P{}{ \left\| \sum_{i=1}^m v_i v_i^{*} \right\| \leq 1 + 2\sqrt{\epsilon}\sqrt{1 - \epsilon} } > 0.
\end{equation}
\end{theorem}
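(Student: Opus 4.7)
The approach follows the Marcus--Spielman--Srivastava method of interlacing polynomials, with a root bound tailored to the two-valued structure of each $v_i$. The first step is the standard interlacing reduction: because each $v_i$ takes exactly two values, the random polynomial $\det(xI - \sum_i v_i v_i^*)$ is the leaf polynomial of a balanced binary interlacing family of depth $m$, and real stability of the expected characteristic polynomial forces the existence of some realization $(v_1,\ldots,v_m)$ with
\[
\Bigl\|\sum_i v_i v_i^*\Bigr\| \;\le\; \operatorname{maxroot} p(x), \qquad p(x) := \E{}{\det\bigl(xI - \textstyle\sum_i v_i v_i^*\bigr)}.
\]
The theorem thus reduces to the deterministic claim that the largest root of $p(x)$ does not exceed $1 + 2\sqrt{\epsilon(1-\epsilon)}$.

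Next I would identify $p(x)$ with a mixed characteristic polynomial. Setting $A_i := \E{}{v_i v_i^*}$, the MSS identity for sums of independent rank-one random matrices yields
\[
p(x) \;=\; \mu[A_1,\ldots,A_m](x) \;:=\; \prod_{i=1}^m (1 - \partial_{z_i}) \det\!\Bigl(xI - \textstyle\sum_i z_i A_i\Bigr)\bigg|_{z=0}.
\]
The hypotheses translate to $\sum A_i = I$ and $\tr(A_i) \le \epsilon$; and because $v_i$ takes only two values, $A_i$ is a convex combination of two rank-one matrices, hence has rank at most $2$, with its two nonzero eigenvalues $a_i, b_i \ge 0$ satisfying $a_i + b_i \le \epsilon$. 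This rank restriction, absent from the general MSS setting, is the extra structural feature to be exploited.

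The central step is a refined multivariate barrier argument applied to the real-stable polynomial $Q(x,z) := \det(xI - \sum_i z_i A_i)$. Starting from $(x,z) = (1+\delta, 1,\ldots,1)$, where $Q$ has no $x$-roots above $1$ (since $\sum A_i = I$), I would slide each $z_i$ sequentially from $1$ down to $0$, applying $(1 - \partial_{z_i})$ at each step and tracking the motion of the upper barrier on $x$. In MSS, the per-step shift is controlled by the quantity $\tr(A_i(xI - \sum z_j A_j)^{-1})$ together with the bound $\tr(A_i) \le \epsilon$, yielding the root bound $(1+\sqrt{\epsilon})^2$. Here the plan is to incorporate the rank-$\le 2$ constraint, restricting the second-order contribution to the barrier and yielding a sharper aggregate shift proportional to $\sqrt{\epsilon(1-\epsilon)}$. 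Optimizing the initial offset $\delta$ should then deliver the desired bound $1 + 2\sqrt{\epsilon(1-\epsilon)}$.

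The main obstacle is precisely this refined slide estimate. The original MSS inequality is essentially tight for arbitrary positive $A_i$ with $\tr(A_i) \le \epsilon$, so the improvement cannot come from a better application of the same bound---it requires a genuinely new inequality that uses rank $\le 2$ inside the barrier calculus. The appearance of $\sqrt{\epsilon(1-\epsilon)}$, reminiscent of the variance $\sqrt{p(1-p)}$ of a two-point distribution, strongly suggests the improvement is specifically two-point in nature. Identifying the right quadratic invariant of $A_i$ (plausibly $\tr(A_i) - \tr(A_i^2) = a_i(1-a_i) + b_i(1-b_i)$, which sees both nonzero eigenvalues simultaneously) and threading it cleanly through the barrier dynamics so that the final algebraic answer is exactly $1+2\sqrt{\epsilon(1-\epsilon)}$ is the delicate technical core of the proof.
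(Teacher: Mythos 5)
Your reduction to bounding the largest root of the expected characteristic polynomial, the identification of that polynomial as the mixed characteristic polynomial of the rank-$\le 2$ matrices $A_i = \E{}{v_iv_i^*}$, and the plan to run a multivariate barrier argument all match the paper. But the proposal stops exactly where the new mathematics begins: you correctly observe that the MSS bound is essentially tight for general $A_i$ and that "a genuinely new inequality that uses rank $\le 2$ inside the barrier calculus" is required, and then you do not supply that inequality. Since that inequality is the entire content of the improvement from $(1+\sqrt{\epsilon})^2$ to $1+2\sqrt{\epsilon(1-\epsilon)}$, what you have is an accurate outline of the known strategy together with an acknowledged hole at its core, not a proof.

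For the record, the mechanism is not the invariant $\tr(A_i)-\tr(A_i^2)$ you conjecture. The paper exploits quadraticity in two distinct places. First, when the polynomial $p$ is quadratic in the active variable, the update $p \mapsto (1-\partial_x)\,p(x+\delta,y)$ is \emph{exactly} the second-order operator $1+(\delta-1)\partial_x+(\tfrac{\delta^2}{2}-\delta)\partial_x^2$ because the Taylor expansion of the shift terminates; for $\delta\in(1,2)$ the first-order coefficient is positive and the second-order one negative, and a submultiplicativity inequality for mixed discriminants, $\tilde{D}(\hat{A})\tilde{D}(\hat{B})\ge \tilde{D}(\hat{A},\hat{B})$ for positive semidefinite arguments (equivalent to $\Tr(\hat{A}\hat{B})\ge 0$), then yields monotonicity of the barrier under the weakened hypothesis $\Phi_p^x\le(1-\tfrac1\delta)\tfrac{1}{2-\delta}$. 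Second, because the resulting optimal $\delta=1+\sqrt{\epsilon/(1-\epsilon)}$ is smaller than the MSS shift, barrier contraction no longer automatically guarantees that the shifted reference point stays above the roots; the paper needs a separate elementary lemma (for a real-rooted quadratic $s$, the map $x\mapsto x-2s(x)/s'(x)$ is nonincreasing above the roots) to certify this at each step. Neither ingredient appears in your proposal, so the central step remains unproved.
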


Theorem \ref{thm:new} leads to improved bounds in the conjecture of Weaver known as $KS_2$. Corollary \ref{KS2} improves the original methods of \cite{MSS} that yield the same result albeit for constants $\eta>(2+\sqrt{2})^2 \approx 11.6569$.

\begin{corollary}\label{KS2}
For every $\eta>4$, there exist $\theta > 0$ such that the following holds.
Let $u_1, \dots, u_m\in \C^d$ be such that $\| u_i \| \leq 1$ for all $i$ and
\begin{equation}\label{KS3}
\sum_{i = 1}^m | \langle u, u_i \rangle |^2 = \eta
\qquad\text{for all }||u||=1.
\end{equation}
Then there exists a partition of $[m]:=\{1,\ldots,m\}$ into sets $I_1$ and $I_2$ so that for $k =1, 2$,
\begin{equation}\label{KS4}
\sum_{i \in I_k} | \langle u, u_i \rangle |^2 \leq \eta - \theta
\qquad\text{for all }||u||=1.
\end{equation}
\end{corollary}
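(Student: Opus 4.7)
The plan is to reduce Corollary \ref{KS2} to Theorem \ref{thm:new} via the standard $\C^{2d}$ lifting that encodes a random 2-partition as a rank-one update.

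First I would normalize the frame. Since $\sum_{i=1}^m u_i u_i^* = \eta \bI_d$ by \eqref{KS3}, the rescaled vectors $w_i := u_i/\sqrt{\eta}$ satisfy $\sum_{i=1}^m w_i w_i^* = \bI_d$ and $\|w_i\|^2 \leq 1/\eta$. Next, to convert a partition of $[m]$ into two blocks into a sum of rank-one operators, I would work in $\C^{2d} = \C^d \oplus \C^d$ and for each $i$ define an independent random vector $v_i$ taking the two values $\sqrt{2}(w_i, 0)$ and $\sqrt{2}(0, w_i)$ with equal probability $1/2$. Each $v_i$ is supported on exactly $2$ points, and a direct computation gives
\[
\sum_{i=1}^m \E{}{v_i v_i^*} = \sum_{i=1}^m \begin{pmatrix} w_i w_i^* & 0 \\ 0 & w_i w_i^* \end{pmatrix} = \bI_{2d} \AND \E{}{\|v_i\|^2} = 2\|w_i\|^2 \leq \frac{2}{\eta}.
\]
Setting $\epsilon := 2/\eta$, the hypothesis $\eta > 4$ gives $\epsilon < 1/2$, so Theorem \ref{thm:new} applies.

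By Theorem \ref{thm:new}, with positive probability there is a realization of the $v_i$ with $\|\sum_i v_i v_i^*\| \leq 1 + 2\sqrt{\epsilon}\sqrt{1-\epsilon}$. Such a realization is encoded by a partition $I_1 \sqcup I_2 = [m]$ (according to whether $v_i$ landed in the first or second copy of $\C^d$), and the matrix $\sum_i v_i v_i^*$ is block-diagonal:
\[
\sum_{i=1}^m v_i v_i^* = \frac{2}{\eta}\begin{pmatrix} \sum_{i \in I_1} u_i u_i^* & 0 \\ 0 & \sum_{i \in I_2} u_i u_i^* \end{pmatrix}.
\]
Taking the operator norm of each block, I obtain, for $k = 1, 2$,
\[
\left\|\sum_{i \in I_k} u_i u_i^*\right\| \leq \frac{\eta}{2}\bigl(1 + 2\sqrt{\epsilon(1-\epsilon)}\bigr) = \frac{\eta}{2} + \sqrt{2\eta - 4},
\]
after substituting $\epsilon = 2/\eta$. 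Writing the desired upper bound as $\eta - \theta$, this yields $\theta = \eta/2 - \sqrt{2\eta - 4}$, which is positive precisely when $(\eta - 4)^2 > 0$, i.e.\ for all $\eta > 4$.

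The proof is conceptually straightforward once the lifting is in place; the only delicate point is recognizing that replacing each vector $w_i$ by a random choice of two orthogonal embeddings into $\C^{2d}$ keeps the support size at exactly $2$, which is required by Theorem \ref{thm:new} (as opposed to the finite-support hypothesis of Theorem \ref{thm:old}). That alignment of hypotheses, together with the arithmetic identity $(\eta - 4)^2 \geq 0$ which pins down the threshold $\eta > 4$, is what makes Theorem \ref{thm:new} yield exactly the optimal constant in $KS_2$.
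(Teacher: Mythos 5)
Your proposal is correct and follows essentially the same route as the paper: the lift to $\C^{2d}$ via random vectors $\sqrt{2/\eta}\,(u_i,0)$ or $\sqrt{2/\eta}\,(0,u_i)$, the application of Theorem~\ref{thm:new} with $\epsilon=2/\eta<1/2$, and the resulting $\theta=\eta/2-\sqrt{2\eta-4}>0$ all match the paper's argument. The only nitpick is the phrase ``positive precisely when $(\eta-4)^2>0$'': that inequality also holds for $\eta<4$, so the real role of $\eta>4$ is to ensure $\epsilon<1/2$ (as you correctly note earlier), with positivity of $\theta$ then following.
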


In the second part of the paper we shall deduce quantitative bounds for the Feichtinger conjecture. 
As a consequence of Corollary \ref{KS2} we show that any Parseval frame $\{v_i\}_{i\in I} \subset \mathcal H$ (or more generally a Bessel sequence with bound $1$)  with norms $\|v_i\| \geq \ve$, where $\ve>\sqrt{3}/2$, can be decomposed into two Riesz sequences. We also show the following asymptotic estimate on the size of the partition as $\ve$ approaches to $0$.  

\begin{theorem}\label{main2} 
Suppose $\{v_i\}_{i\in I}$ is a Bessel sequence for a separable Hilbert space $\mathcal{H}$ with bound $1$ that consists of vectors of norms $\|v_i\| \geq \ve$, where $\ve>0$. 
Then there exists a universal constant $C>0$, such that $I$ can be partitioned into $r \leq C/\ve^2$ subsets $I_1, \ldots, I_r$, such that each subfamily $\{v_i\}_{i\in I_j}$, $j=1,\ldots, r$, is a Riesz sequence.
\end{theorem}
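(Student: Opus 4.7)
The plan is to reduce to a unit-norm tight-frame setting, apply the $r$-partition consequence of Theorem \ref{thm:old} with $r \approx 1/\ve^2$ to obtain pieces of bounded Bessel norm, and then pave each piece via Corollary \ref{KS2} into a bounded number of Riesz sequences.

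\textit{Reductions.} By a standard exhaustion argument, it suffices to treat the case of finite $I$. Rescaling $u_i := v_i/\|v_i\|$, the Bessel hypothesis together with $\|v_i\| \geq \ve$ gives that $\{u_i\}$ is unit-norm and Bessel with bound $\leq 1/\ve^2$; since $\|v_i\| \in [\ve,1]$, any Riesz partition of $\{u_i\}$ transfers to one of $\{v_i\}$ with the lower Riesz bound degraded by at most a factor of $\ve^2$. I then dilate by adjoining unit vectors $\{w_j\}$ (chosen so that $\eta I - \sum u_i u_i^* = \sum w_j w_j^*$ with each $\|w_j\| = 1$) to form a unit-norm tight frame $\{u_i\} \cup \{w_j\}$ with tight constant $\eta := \lceil 1/\ve^2 \rceil$.

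\textit{Coarse partition into pieces of bounded Bessel bound.} I would apply Theorem \ref{thm:old} to the random vectors $\sqrt{r/\eta}\,u_i \otimes e_{k_i} \in \C^{dr}$ with $k_i$ uniform on $[r]$; the standard calculation gives $\sum \mathbb{E}[v_i v_i^*] = I_{dr}$ and $\mathbb{E}[\|v_i\|^2] = r/\eta$. Taking $r = \eta$, the MSS bound yields a partition of the tight frame $\{u_i\}\cup\{w_j\}$ into $r = O(1/\ve^2)$ subfamilies each of Bessel bound at most $(1 + \sqrt{r/\eta})^2 = 4$. Restriction to $\{u_i\}$ preserves this bound on each block.

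\textit{Bounded paving and conclusion.} Each resulting piece $\{u_i\}_{i\in J}$ is unit-norm Bessel with bound $\leq 4$, so its Gram matrix $G_J$ satisfies $\|G_J - I\| \leq 3$ and has zero diagonal. The Akemann--Anderson paving conjecture (equivalent to Kadison--Singer, derived quantitatively from Corollary \ref{KS2}) then produces a partition of $J$ into $r_0$ blocks with $\|P_\ell (G_J-I) P_\ell\| < 1$, where $r_0$ depends only on the bound $3$ and is therefore an absolute constant. Each block satisfies $G_J|_\ell \succeq (1 - \|P_\ell(G_J-I)P_\ell\|) I \succ 0$, so it is a Riesz sequence. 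Combining, $\{v_i\}$ is partitioned into $r \cdot r_0 = O(1/\ve^2)$ Riesz sequences, giving $C = r_0$ (up to the $1/\ve^2$ factor absorbed into the normalization). The main obstacle is the bounded paving step: one must establish a quantitative paving of a zero-diagonal self-adjoint matrix of bounded norm into an absolute number of sub-blocks of norm $< 1$, and Corollary \ref{KS2} (through Theorem \ref{thm:new}) is precisely the quantitative input needed to make $r_0$ independent of $\ve$, which is what keeps the final partition size linear in $1/\ve^2$.
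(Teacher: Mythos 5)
Your outline is essentially sound and reaches the right bound, but it diverges from the paper in the second, crucial step. Both arguments begin the same way: normalize to unit vectors, complete to a tight frame, and use the $r$-set Weaver partition (Theorem \ref{MSS}, i.e.\ the tensoring trick you describe) to split into $O(1/\ve^2)$ pieces of controlled Bessel bound. The difference is how each piece is turned into Riesz sequences. You stop the coarse partition at Bessel bound $4$ and then invoke quantitative Anderson paving of the zero-diagonal part of the Gram matrix to split each piece into an absolute number $r_0$ of blocks with $\|P_\ell(G_J-\mathbf I)P_\ell\|<1$. The paper instead takes the coarse partition slightly finer (costing only a constant factor), so that on each piece the vector norms nearly saturate the Bessel bound; then a single application of the two-set Weaver result to the \emph{Naimark complement} (Proposition \ref{ce} and Corollary \ref{cek}) converts the Bessel estimate on the complement into a Riesz lower bound on the piece, splitting it into just $2$ Riesz sequences. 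The paper's route uses only what is proved in the paper and yields explicit, small constants (e.g.\ $r\le 162/\ve^2$); your route gives the same asymptotics but with a much larger absolute constant ($r_0$ from paving is on the order of $(6/\epsilon)^4$ with $\epsilon$ near $1/3$).

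Two points you should tighten. First, the paving step is a genuine dependency, not a formality: Corollary \ref{KS2} by itself (a two-set partition for $\eta>4$) does not directly pave a zero-diagonal Hermitian matrix of norm $3$ down to norm $<1$; you need the multi-set version together with the reduction from arbitrary Hermitian matrices to positive semidefinite ones (this is \cite[Theorem 6.1]{MSS}, which the paper cites but does not prove). As written, your "main obstacle" remark correctly identifies the issue but does not resolve it. Second, your completion to a unit-norm tight frame requires a Schur--Horn majorization check ($\eta\mathbf I-\sum u_iu_i^*$ must majorize a constant sequence of ones), which can fail without enlarging the ambient space; the paper's proof of Corollary \ref{PM} adds extra dimensions precisely to handle this.
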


It is easy to see that Theorem \ref{main2} gives the optimal asymptotic behavior on the size $r$ of the partition. Indeed, it suffices to consider the union of $r= \lfloor 1/\ve^2 \rfloor$ copies of an orthogonal basis of $\mathcal H$ scaled by a factor $\ve$. This yields a Bessel sequence with bound $1$ that can not be partitioned into fewer than $r$ Riesz sequences.


\subsection{Review of Marcus, Spielman, Srivastava}\label{review}

We give a brief review of the proof in \cite{MSS} to provide some context for the statement of our main technical theorem.
See Section~\ref{sec:proof} for a more detailed discussion.

The results in \cite{MSS} use a construction introduced by the same authors in \cite{MSS0} that they called an {\em interlacing family} of polynomials. 
In \cite{MSS0}, they showed that each interlacing family constructed from a collection of polynomials $\{ p_i \}$ provides a polynomial $p^*$ with the following properties:
\begin{enumerate}
\item $p^*$ has all real roots,
\item $\mathrm{maxroot}(p^*) > \mathrm{maxroot}(p_j)$ for some $j$.
\end{enumerate}
As a result, if one can bound the largest root of the associated $p^*$, then one can assert that some polynomial in the collection has a largest root which satisfies the same bound.

In order to apply this to matrices, \cite{MSS} uses characteristic polynomials.
They consider certain convex combinations of these polynomials, which they call {\em mixed characteristic polynomials}. 
To bound the largest root of the mixed characteristic polynomial, they define a process on multivariate polynomials which starts at a determinantal polynomial and ends at a polynomial which is a multivariate version of the mixed characteristic polynomial.
They use what they call a {\em barrier function} to maintain an upper bound on the size of the largest root as the process evolves.

The bound that was proved in \cite{MSS} holds for mixed characteristic polynomials in general. 
In the first part of this paper, we consider the special case of when they are (at most) quadratic in each of its variables (corresponding to matrices of rank at most $2$).
Our main technical theorem is the following:
\begin{theorem}\label{thm:mixed}
Suppose $A_1, \ldots, A_m$ are $d \times d$ Hermitian positive semidefinite matrices of rank at most $2$ satisfying
\[
\sum_{i=1}^m A_i = \bI 
\AND 
\operatorname{Tr}(A_i) \le \epsilon<1/2 \quad\text{for all i}.
\]
Then the largest root of the polynomial
\begin{equation}\label{eqn:mixed1}
\left(\prod_{i=1}^{m} 1 - \partial_{y_i} \right) 
\mydet{\sum_{i=1}^{m} y_{i} A_{i}}
\bigg|_{y_{1} = \dots = y_{m} = x}
\end{equation}
is at most $1 + 2 \sqrt{\epsilon}\sqrt{1 - \epsilon}$.
\end{theorem}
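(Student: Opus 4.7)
The plan is to adapt the multivariate barrier method of Marcus--Spielman--Srivastava \cite{MSS}, refining it to exploit the rank-$\le 2$ hypothesis. Set
$$P(y_1, \ldots, y_m) := \mydet{\sum_{i=1}^m y_i A_i},$$
which is real-stable (each $A_i$ is positive semidefinite) and has degree at most $2$ in each $y_i$. Since the operators $1 - \partial_{y_i}$ preserve real stability, the polynomial in (\ref{eqn:mixed1}) is real-rooted in $x$. We bound its largest root by iteratively tracking a multivariate barrier.

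For a real-stable polynomial $Q$ in $y_1, \ldots, y_m$ whose roots (in each coordinate, others held at $x$) lie below $x$, define
$$\Phi^i_Q(x) := \frac{\partial_{y_i} Q(y)}{Q(y)}\bigg|_{y_1 = \cdots = y_m = x}.$$
For the starting polynomial we compute $P(x\1) = x^d$ and $\partial_{y_i} P(y) \big|_{y=x\1} = \Tr(A_i) x^{d-1}$, whence $\Phi^i_P(x) = \Tr(A_i)/x \le \epsilon/x$. Fix a parameter $\phi \in (0,1)$ to be chosen, and set $x_0 := \epsilon/\phi$, so that $\Phi^i_P(x_0) \le \phi$ for every $i$.

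The main technical ingredient is a sharpened single-step update. Suppose $Q$ is real-stable, quadratic in $y_j$, and satisfies $\Phi^i_Q(x) \le \phi$ for all $i$. Then there exists an increment $\delta_j > 0$ such that
$$\Phi^i_{(1-\partial_{y_j}) Q}(x + \delta_j) \le \phi \quad \text{for all } i,$$
and $\delta_j$ can be chosen so that $\sum_{j=1}^m \delta_j \le (1-\epsilon)/(1-\phi)$. The crucial improvement over the general-rank argument comes from the Newton-type inequality available for real-rooted quadratics: if $Q$ factors in $y_j$ as $c(y_j - r_1)(y_j - r_2)$, then
$$- \partial_{y_j}\Phi^j_Q = \frac{1}{(x-r_1)^2} + \frac{1}{(x-r_2)^2} \ge \frac{1}{2} \left(\frac{1}{x-r_1} + \frac{1}{x-r_2}\right)^{2} = \frac{(\Phi^j_Q)^2}{2},$$
together with analogous inequalities for the mixed derivatives $\partial_{y_i} \Phi^j_Q$. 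These sharpen the per-step shift governed by the MSS barrier calculus precisely because each $A_j$ has rank $\le 2$.

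Iterating the update through all $m$ applications of $(1-\partial_{y_i})$ yields that the largest root of the polynomial in (\ref{eqn:mixed1}) is at most
$$x_0 + \sum_{j=1}^m \delta_j \le \frac{\epsilon}{\phi} + \frac{1-\epsilon}{1-\phi}.$$
Minimizing the right-hand side over $\phi \in (0,1)$ by setting the derivative to zero gives $\phi^* = \sqrt{\epsilon}/(\sqrt{\epsilon} + \sqrt{1-\epsilon})$, at which
$$\frac{\epsilon}{\phi^*} + \frac{1-\epsilon}{1-\phi^*} = \left(\sqrt{\epsilon} + \sqrt{1-\epsilon}\right)^{2} = 1 + 2\sqrt{\epsilon(1-\epsilon)},$$
as claimed. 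The hypothesis $\epsilon < 1/2$ ensures $\phi^* < 1/2$, so the barrier constraint $\Phi^j_Q < 1$ is preserved throughout the iteration.

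The main obstacle is establishing the sharpened single-step update in a form strong enough to produce the clean cumulative shift $(1-\epsilon)/(1-\phi)$, rather than the weaker analogue coming from the general-rank MSS argument. One must carefully quantify both the change in $\Phi^j_Q$ under $(1-\partial_{y_j})$ and the cross-effects on $\Phi^i_Q$ for $i \ne j$, and then show that the factor $\tfrac12$ in the Newton inequality above aggregates, via the constraint $\sum_j \Tr(A_j) = d$, into the crucial factor $(1 - \epsilon)$ in the numerator. Coupling the MSS interlacing/real-stability machinery with the additional quadratic structure of $Q$ in each $y_j$ is the delicate step where the improvement over \cite{MSS} must be extracted.
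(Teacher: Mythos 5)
Your skeleton is the right one---the multivariate barrier method with starting point $x_0=\epsilon/\phi$ so that $\Phi^i_P(x_0)=\Tr(A_i)/x_0\le\phi$, the Newton inequality $s'(x)^2\ge 2s(x)s''(x)$ for real-rooted quadratics (this is exactly Lemma~\ref{lem:decreasing}), and a final one-parameter optimization that lands on $1+2\sqrt{\epsilon(1-\epsilon)}$. But the central claim, the ``sharpened single-step update'' with $\sum_{j=1}^m\delta_j\le(1-\epsilon)/(1-\phi)$, is both unproved and structurally at odds with the method. Each application of $(1-\partial_{y_j})$ raises every barrier value $\Phi^i$ by roughly $(-\partial_{y_j}\Phi^i)/(1-\Phi^j)$, and the compensating shift must be bounded below by an absolute constant independent of $m$ (at least about $1/(2(1-\phi))$ even after the factor-of-two quadratic improvement); so a sum of genuinely per-step shifts grows linearly in $m$ and cannot be $O(1)$. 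Conversely, the quantity controlling the largest root of the diagonal restriction is the final reference point $(t+\delta)\1$, i.e.\ a \emph{single} per-coordinate shift, not a sum. You have imported the ``aggregate the potential drops'' bookkeeping from the univariate barrier arguments (sparsification, restricted invertibility), where all shifts live in one variable; here they live in $m$ different coordinates. The heuristic that the Newton factor $\tfrac12$ ``aggregates via $\sum_j\Tr(A_j)=d$ into $(1-\epsilon)$'' is not a derivation, and I do not see a correct one.

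The paper keeps one height $t$ and one per-coordinate shift $\delta$, so the bound is $t+\delta$, and extracts two separate quadratic refinements. Your Newton inequality enters only in Lemma~\ref{lem:b2}, to show the shifted point stays above the roots (it relaxes the requirement to $\Phi_s(t)<2/(2-\delta)$, permitting $\delta<2$). The step your sketch waves at with ``analogous inequalities for the mixed derivatives'' is Lemma~\ref{lem:phi}: the barrier in coordinates $j>i+1$ does not increase provided $\Phi^x_p\le(1-1/\delta)\frac{1}{2-\delta}$, and its proof is where all the mixed-discriminant machinery lives (Corollary~\ref{cor:derivatives} to express derivatives as $\myD{\hA[i],\hB[j]}$, and Lemma~\ref{lem:submult}, i.e.\ $\myD{\hA}\myD{\hL}-\myD{\hA,\hL}=\Tr(\hA\hL)\ge0$, exploiting that only $p_x$ and $p_{xx}$ survive when $p$ is quadratic). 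The bound then comes from minimizing $t+\delta$ under the constraint $\epsilon/t=(1-1/\delta)/(2-\delta)$, giving $\delta=1+\sqrt{\epsilon/(1-\epsilon)}$ and $t=(1-2\epsilon)\sqrt{\epsilon/(1-\epsilon)}$. Note these differ from your pair: at $\epsilon=0.1$ your $\phi^*=0.25$ and $\delta=1.2$ violate the constraint of Lemma~\ref{lem:phi} (which requires $\phi\le(1-1/\delta)/(2-\delta)\approx0.208$), so even reinterpreted as a per-coordinate shift your parameters are not supported by any available lemma. To close the gap you must state and prove the cross-coordinate contraction step; that is the actual content of the improvement.
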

This is an improvement over the value of $(1+\sqrt{\epsilon})^2$ in \cite[Theorem~5.1]{MSS}, but only in the case of rank $2$ matrices.
The proof follows the general outline in \cite{MSS}, but employs tighter inequalities that exploit the bounded rank of the matrices in (\ref{eqn:mixed1}).
Our main analytic tool will be the {\em mixed discriminant}, a multilinear generalization of the determinant function.
In Section \ref{sec:md} we will review the properties of the mixed discriminant that we will need in later sections.
Some of these properties are well known (see, for example \cite{Bapat, Gur}), but for the benefit of the reader we will try to make the presentation self-contained.


\subsection{Organization}
The paper is organized as follows. 
In Section \ref{sec:md} we present some elementary properties of mixed discriminants and then in Section~\ref{sec:apply} we show how these properties can be used to establish bounds on the barrier function discussed in the previous section.
Section~\ref{sec:proof}, in particular, contains the proofs of Theorem~\ref{thm:new}, Corollary \ref{KS2}, and Theorem~\ref{thm:mixed}.

In Sections \ref{sec:frame_partitions} and \ref{sec:asymptotics}, we apply Theorems \ref{thm:old} and \ref{thm:new} to get quantitative bounds in frame theory.
Section \ref{sec:frame_partitions} contains the results from frame theory that show the interlinking properties of complementary subsets of a Parseval frame.  
In Section \ref{sec:asymptotics} we use the results in the previous sections to explore implications in frame theory. 
In particular, we prove Theorem \ref{main2} and some of its variants such as the $R_\epsilon$ conjecture and Bourgain--Tzafriri conjecture.
Our focus here will be in optimizing the bounds that follow from the results of the previous sections.


\section{Mixed discriminant and properties}\label{sec:md}

Let $S_d$ denote the symmetric group on $d$ elements.
Given $d \times d$ matrices $\bX_1, \dots, \bX_d$ and a permutation $\sigma \in S_d$, let $Y_\sigma(\bX_1, \dots, \bX_d)$ be the matrix with
\[
 \col{j}{Y_\sigma(\bX_1, \dots, \bX_n)} = \col{j}{\bX_{\sigma(j)}}
\]
where $\mathrm{col}_j$ denotes the ``$j$th column'' function.
$Y_\sigma$ can be seen as a ``mixture'' of its input matrices since each of its columns comes from a different input matrix.
\begin{definition}
\label{def:md}
The {\em mixed discriminant} of $\bX_1, \dots, \bX_d$ is the quantity
\[
\D{\bX_1, \dots, \bX_d} 
= 
\sum_{\sigma \in S_d} \mydet{Y_\sigma(\bX_1, \dots, \bX_d)}.
\]
\end{definition}
\begin{remark}\label{re:md}
Note that our definition of the mixed discriminant differs by a factor of $d!$ from many other treatments (including \cite{Bapat}), corresponding to an average over $S_d$ rather than a sum.
The literature is far from standard in this respect, and our reason for taking this normalization is that it will simplify a number of the formulas we will use.
\end{remark} 

To ease notation slightly, given a matrix $\bX$, we will write $\bX[k]$ to denote a vector that repeats $\bX$ $k$ times.
The following two examples follow directly from Definition \ref{def:md}.

\begin{example}
\label{ex:diagonal}
For $\bX$ a $d \times d$ matrix, 
\[
\D{ \bX[d] } = \D{ \bX, \bX, \dots, \bX } = d! \mydet{\bX}.
\]
\end{example}

\begin{example}
\label{ex:mytr}
For any $d \times d$ matrix $X$, 
\[
\D{\bX, \bI[d-1]} = (d-1)!\Tr(\bX).
\]
\end{example}

It should be clear that $\D{}$ is symmetric in its arguments (we will refer to this property as {\em permutation invariance}).
One property of the mixed discriminant that gives it much of its versatility as an analytic tool is its multilinearity (linearity in each input matrix).
Due to permutation invariance, it suffices to state this as linearity in the first coordinate.
\begin{lemma}[Multilinearity]
\label{lem:multilinearity}
\[
\D{a\bA + b\bB, \bX_2, \dots, \bX_d} = a\D{\bA, \bX_2, \dots, \bX_d} + b \D{\bB, \bX_2, \dots, \bX_d}
\]
\end{lemma}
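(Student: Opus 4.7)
The plan is to reduce this to the standard fact that the determinant is multilinear in its columns. Fix a permutation $\sigma\in S_d$. By construction, the matrix $Y_\sigma(\bX_1, \bX_2,\dots,\bX_d)$ draws its $j$-th column from $\bX_{\sigma(j)}$, so exactly one column of $Y_\sigma$ comes from the first slot, namely the column indexed by $j_0 := \sigma^{-1}(1)$. Every other column of $Y_\sigma(a\bA+b\bB,\bX_2,\dots,\bX_d)$ is identical to the corresponding column of $Y_\sigma(\bA,\bX_2,\dots,\bX_d)$ and of $Y_\sigma(\bB,\bX_2,\dots,\bX_d)$, since those columns depend only on $\bX_2,\dots,\bX_d$.

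First I would write out the single distinguished column explicitly:
\[
\col{j_0}{Y_\sigma(a\bA+b\bB,\bX_2,\dots,\bX_d)} = a\col{j_0}{\bA}+b\col{j_0}{\bB}.
\]
Then, applying column-multilinearity of the usual determinant at column $j_0$, I obtain
\[
\mydet{Y_\sigma(a\bA+b\bB,\bX_2,\dots,\bX_d)} = a\mydet{Y_\sigma(\bA,\bX_2,\dots,\bX_d)} + b\mydet{Y_\sigma(\bB,\bX_2,\dots,\bX_d)}.
\]
Finally, summing this identity over $\sigma\in S_d$ and grouping the $a$ and $b$ terms separately recovers
\[
\D{a\bA+b\bB,\bX_2,\dots,\bX_d} = a\D{\bA,\bX_2,\dots,\bX_d}+b\D{\bB,\bX_2,\dots,\bX_d},
\]
as desired.

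There is really no obstacle here; the only thing to be careful about is the bookkeeping observation that, for every $\sigma$, precisely one column of $Y_\sigma$ is read off from the first input matrix, so the substitution $\bX_1\mapsto a\bA+b\bB$ acts on that single column and leaves the other $d-1$ columns unchanged. Once that is pointed out, multilinearity of the determinant in its columns does all the work, and the result follows by linearity of the sum over $S_d$.
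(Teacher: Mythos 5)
Your proof is correct and follows essentially the same route as the paper's: fix a permutation $\sigma$, note that exactly one column of $Y_\sigma$ comes from the first input slot, apply column-multilinearity of the determinant to that column, and sum over $S_d$. The identification of the distinguished column as $j_0=\sigma^{-1}(1)$ is a slightly more explicit piece of bookkeeping than the paper bothers with, but the argument is the same.
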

\begin{proof}
It suffices to show that 
\[
\mydet{Y_\sigma(a\bA + b\bB, \bX_2, \dots, \bX_d)} = a\mydet{Y_\sigma(\bA, \bX_2,  \dots, \bX_d)} + b\mydet{Y_\sigma(\bB, \bX_2, \dots, \bX_d)}
\]
for each permutation $\sigma$ as then the same holds for the sum.
However, this follows easily from the definition and the linearity of the determinant with respect to columns:
\[
 \mydet{\begin{array}{c|c|c|c}
  a \vec{u} + b\vec{v} & \vec{x}_2 & \dots & \vec{x}_d
\end{array}
}
=
a \mydet{\begin{array}{c|c|c|c}
  \vec{u} & \vec{x}_2 & \dots & \vec{x}_d
\end{array}
}
+ 
b \mydet{\begin{array}{c|c|c|c}
  \vec{v} & \vec{x}_2 & \dots & \vec{x}_d
\end{array}
}
\]
since exactly one column of $Y_\sigma$ comes from any one of its inputs.
\end{proof}

One useful corollary of multilinearity is that the mixed discriminant has an expansion similar to the binary expansion 
\[
(a + b)^d = \sum_i \binom{d}{i} a^i b^{d-i}.
\]
Starting with Example~\ref{ex:diagonal} and iterating Lemma~\ref{lem:multilinearity} gives an analogous formula:
\begin{example}
\[
d!\mydet{xA + yB} = \D{(xA + yB)[d]} = \sum_i \binom{d}{i}x^i y^{d-i} \D{A[i], B[d-i]}.
\]
\end{example}

Mixed discriminants also have useful multiplicative properties, which are not as easily inferred from Definition~\ref{def:md}.
For this reason, we find it worthwhile to derive an equivalent characterization (which one often sees given as the primary definition).

\begin{lemma}\label{lem:def2}
For $d \times d$ matrices $\bX_1, \dots, \bX_d$, we have
\[
 \D{\bX_1, \dots, \bX_d} = \frac{\partial^d}{\partial t_1 \dots \partial t_d}  \mydet{ \sum_{i=1}^d t_i \bX_i }.
\]
\end{lemma}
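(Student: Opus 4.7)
The plan is to expand both sides and match term by term, exploiting multilinearity of the determinant in its columns.

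First I would observe that the $j$th column of the matrix $\sum_{i=1}^d t_i X_i$ is $\sum_{i=1}^d t_i \col_j[X_i]$. Applying the standard column-multilinearity of $\det$ separately in each of the $d$ columns expands the determinant as a sum over all ordered tuples $(i_1,\dots,i_d) \in [d]^d$:
\[
\mydet{\sum_{i=1}^d t_i X_i} = \sum_{(i_1,\dots,i_d) \in [d]^d} t_{i_1} t_{i_2} \cdots t_{i_d} \mydet{\col_1[X_{i_1}] \,|\, \col_2[X_{i_2}] \,|\, \cdots \,|\, \col_d[X_{i_d}]}.
\]

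Next I would apply the operator $\partial^d/(\partial t_1 \cdots \partial t_d)$ term by term. A monomial $t_{i_1} \cdots t_{i_d}$ survives this operator (with value $1$) precisely when it contains each variable $t_1,\dots,t_d$ exactly once, which forces $(i_1,\dots,i_d)$ to be a permutation of $(1,\dots,d)$; every other monomial is annihilated either by repeated differentiation of a variable that was not present, or by differentiation past the degree in some variable. So only the permutation-indexed terms remain, each with coefficient $1$, giving
\[
\frac{\partial^d}{\partial t_1 \cdots \partial t_d} \mydet{\sum_{i=1}^d t_i X_i} = \sum_{\sigma \in S_d} \mydet{\col_1[X_{\sigma(1)}] \,|\, \cdots \,|\, \col_d[X_{\sigma(d)}]}.
\]

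Finally, I would note that by the definition of $Y_\sigma$, the matrix on the right above is exactly $Y_\sigma(X_1,\dots,X_d)$ (since $\col_j[Y_\sigma] = \col_j[X_{\sigma(j)}]$), so the sum is $D(X_1,\dots,X_d)$ by Definition \ref{def:md}. There is no real obstacle here; the only step requiring care is the bookkeeping that identifies surviving monomials with permutations and confirms the coefficient is $1$ rather than some multinomial factor, which is why the normalization chosen in Remark \ref{re:md} (sum rather than average over $S_d$) makes the identity come out cleanly.
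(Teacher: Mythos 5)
Your proposal is correct and follows essentially the same route as the paper: expand the determinant by column-multilinearity into a sum over tuples $(i_1,\dots,i_d)\in[d]^d$, observe that the mixed partial $\partial^d/(\partial t_1\cdots\partial t_d)$ kills every monomial except those indexed by permutations (each surviving with coefficient $1$), and identify the surviving determinants with $\mydet{Y_\sigma(\bX_1,\dots,\bX_d)}$. No gaps.
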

\begin{proof}
Note that $\mydet{ \sum_{i=1}^d t_i \bX_i }$ is a homogeneous degree $d$ polynomial. By the linearity of the determinant with respect to columns we have
\[
\mydet{ \sum_{i=1}^d t_i \bX_i } = \sum_{i_1=1}^d \ldots \sum_{i_d=1}^d t_{i_1} \cdots t_{i_d} \mydet{\col{1}{X_{i_1}} | \ldots | \col{d}{X_{i_d}}}.
\]
Since
\[
\frac{\partial^d(t_{i_1} \cdots t_{i_d})}{\partial t_1 \dots \partial t_d} =
\begin{cases} 1 & \text{if $(i_1,\ldots,i_d)$ is a permutation of $[d]$},\\
0 & \text{otherwise,}
\end{cases}
\]
the partial derivative $\frac{\partial^d}{\partial t_1 \dots \partial t_d}$ will pick up only terms corresponding to a permutation of $[d]$. Hence,
\[
\frac{\partial^d}{\partial t_1 \dots \partial t_d}  \mydet{ \sum_{i=1}^d t_i \bX_i } = \sum_{\sigma \in S_d} \mydet{\col{1}{X_{\sigma(1)}} | \ldots | \col{d}{X_{\sigma(d)}}}
=  \D{\bX_1, \dots, \bX_d}
\]
as required.
\end{proof}

The characterization in Lemma~\ref{lem:def2} is often easier to work with than Definition \ref{def:md}.
This is evident in the following example:

\begin{example}
\label{ex:zero}
Let $\bX_1, \dots, \bX_d$ be $d \times d$ matrices and $\vec{v}$ a vector such that $\bX_k \vec{v} = \vec{0}$ for all $k$.
Then 
\[
\D{\bX_1, \dots, \bX_d} = 0
\]
\end{example}

Expressing higher rank matrices as sums of rank 1 matrices can often simplify proofs considerably.
In such cases, the following lemma is quite useful:
\begin{lemma}
\label{lem:rank1}
If $d\times d$ matrices $\bX_1, \dots, \bX_d$ have rank $1$, then
\[
\D{\bX_1, \dots, \bX_d} = \mydet{ \sum_{i=1}^d \bX_i }.
\]
\end{lemma}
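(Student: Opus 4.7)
The plan is to reduce everything to an explicit factorization afforded by the rank-one hypothesis, and then recover the mixed discriminant via the partial derivative characterization of Lemma~\ref{lem:def2}.

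First, I would write each rank-one matrix as $\bX_i = u_i v_i^\ast$ for vectors $u_i,v_i \in \C^d$, and assemble them into $d\times d$ matrices $U = [\,u_1\mid \cdots \mid u_d\,]$ and $V = [\,v_1\mid \cdots \mid v_d\,]$. A direct column-by-column computation yields the factorization
\[
\sum_{i=1}^d t_i \bX_i \;=\; \sum_{i=1}^d t_i\, u_i v_i^\ast \;=\; U\,\mathrm{diag}(t_1,\dots,t_d)\, V^\ast.
\]
Taking determinants and using multiplicativity therefore gives
\[
\mydet{\sum_{i=1}^d t_i \bX_i} \;=\; c\cdot t_1 t_2 \cdots t_d, \qquad c := \det(U)\,\overline{\det(V)},
\]
where $c$ is a scalar independent of the $t_i$'s. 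This is the key structural fact: in the rank-one case the polynomial $\det(\sum t_i \bX_i)$ collapses to a single monomial.

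Next, applying Lemma~\ref{lem:def2} and differentiating term by term,
\[
\D{\bX_1, \dots, \bX_d} \;=\; \frac{\partial^d}{\partial t_1 \cdots \partial t_d} \Bigl( c\, t_1 \cdots t_d \Bigr) \;=\; c.
\]
On the other hand, setting $t_1 = \cdots = t_d = 1$ in the factored form gives
\[
\mydet{\sum_{i=1}^d \bX_i} \;=\; \det(U)\det(V^\ast) \;=\; c.
\]
Comparing the two expressions yields the desired identity.

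There isn't really a hard step here; the only conceptual move is recognizing that rank-one inputs produce the factorization $\sum t_i u_i v_i^\ast = U\,\mathrm{diag}(t)\,V^\ast$, after which the identity follows from multiplicativity of the determinant and the fact that $\partial^d(t_1\cdots t_d)/\partial t_1\cdots\partial t_d = 1$. The case in which some $u_i$ or $v_i$ vanishes (degenerate ``rank one'') is automatically handled, since then $c = 0$ and both sides of the claimed identity vanish.
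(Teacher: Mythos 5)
Your proof is correct, and it takes a genuinely different route from the paper's. The paper argues combinatorially: it expands $\mydet{\sum_i u_i v_i^*}$ as $\frac{1}{d!}\D{(\sum_i u_iv_i^*)[d]}$ via Example~\ref{ex:diagonal}, uses multilinearity to get a sum over all index tuples $(i_1,\dots,i_d)$, invokes Example~\ref{ex:zero} to kill every term with a repeated index (after first disposing of the case where two of the $u_i$ or $v_i$ coincide), and then counts the $d!$ surviving permutation terms, which agree by symmetry. You instead exploit the global factorization $\sum_i t_i u_i v_i^* = U\,\mathrm{diag}(t_1,\dots,t_d)\,V^*$, so that multiplicativity of the determinant collapses $\mydet{\sum_i t_i \bX_i}$ to the single monomial $c\,t_1\cdots t_d$, and Lemma~\ref{lem:def2} then reads off the mixed discriminant as the coefficient $c$, which visibly equals $\mydet{\sum_i \bX_i}$. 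Your version is shorter and requires no case analysis: the degenerate situations the paper must treat separately (repeated $u_i$ or $v_i$, more generally $\det U=0$ or $\det V=0$) are absorbed automatically into $c=0$. The paper's argument, on the other hand, stays entirely within the mixed-discriminant calculus it is building (multilinearity, permutation invariance, the vanishing criterion), which is presumably why it is presented that way; but as a standalone proof yours is at least as good.
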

\begin{proof}
Let $X_i = u_i v_i^*$.
We first note that if $v_i = v_j$ or $u_i = u_j$ for any $i \neq j$, then 
$\sum_{i=1}^d t_i u_i v_i^*$ would have rank less than $d$.
Hence by Example~\ref{ex:zero}, we have
\[
\D{\bX_1, \dots, \bX_d} = \mydet{ \sum_{i=1}^d t_i \bX_i } = 0.
\]
So assume that vectors $u_i$ (and, separately, vectors $v_i$) are distinct.
By Example~\ref{ex:diagonal}, we have
\[
\mydet{ \sum_{i=1}^d u_i v_i^* } 
=
\frac{1}{d!} \D{ \left(\sum_{i=1}^d u_i v_i^*\right)[d] } 
= 
\sum_{i_1=1}^d\dots \sum_{i_d=1}^d \D{ u_{i_1}v_{i_1}^*, \dots, u_{i_d}v_{i_d}^*}
\]
where any term with two indices $i_{j} = i_{k}$ contributes $0$ (for the reason mentioned above).
Hence the only contributing terms happen if $(i_1,\ldots,i_d)$ is a permutation of $[d]$ and there are $d!$ of these.
Furthermore, each of these gives the same contribution, since $\D{}$ is symmetric in its arguments.
Hence, we have
\[
\mydet{ \sum_i u_i v_i^* } = \frac{1}{d!} \D{ \left( \sum_i u_i v_i^* \right) [d] } = \D{ u_1v_1^*, \dots, u_d v_d^*}.
\]
\end{proof}

The next lemma is an extension of the familiar multiplication identity of the determinant $\mydet{AB} = \mydet{A}\mydet{B}$.

\begin{lemma}[Multiplication] 
\label{lem:mult}
For $d \times d$ matrices $\bX_1, \dots, \bX_d, \bY_1, \dots, \bY_d$, we have
\[
\D{\bX_1, \dots, \bX_d}\D{\bY_1, \dots, \bY_d} = \sum_{\pi \in S_d} \D{\bX_1\bY_{\pi(1)}, \dots, \bX_d\bY_{\pi(d)}}
\]
\end{lemma}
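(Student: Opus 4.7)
My plan is to compute a single polynomial in $2d$ auxiliary scalar variables in two different ways, one reproducing the left-hand side of the identity and one reproducing the right-hand side. Specifically, introduce new variables $t_1,\dots,t_d$ and $s_1,\dots,s_d$, set $\bA(\mathbf t) = \sum_i t_i \bX_i$ and $\bB(\mathbf s) = \sum_j s_j \bY_j$, and consider
\[
f(\mathbf t, \mathbf s) \;=\; \mydet{\bA(\mathbf t)\,\bB(\mathbf s)}.
\]
I will then evaluate $\partial_{t_1}\cdots\partial_{t_d}\,\partial_{s_1}\cdots\partial_{s_d}\, f$ in two ways.

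For the left-hand side, I use multiplicativity of the determinant to write $f = \mydet{\bA(\mathbf t)}\,\mydet{\bB(\mathbf s)}$. Since the two factors involve disjoint sets of variables, the $2d$ partial derivatives separate, and Lemma \ref{lem:def2} applied to each factor gives exactly $\D{\bX_1,\dots,\bX_d}\,\D{\bY_1,\dots,\bY_d}$.

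For the right-hand side, I expand $\bA(\mathbf t)\bB(\mathbf s) = \sum_{i,j} t_i s_j \bX_i\bY_j$, invoke Example \ref{ex:diagonal} to write $f = \tfrac{1}{d!}\D{(\bA\bB)[d]}$, and apply Lemma \ref{lem:multilinearity} iteratively in each of the $d$ arguments to pull out the coefficients:
\[
f \;=\; \frac{1}{d!}\sum_{i_1,j_1,\dots,i_d,j_d} t_{i_1}s_{j_1}\cdots t_{i_d}s_{j_d}\; \D{\bX_{i_1}\bY_{j_1},\dots,\bX_{i_d}\bY_{j_d}}.
\]
Applying all $2d$ partial derivatives annihilates every monomial except those in which each $t_i$ and each $s_j$ occurs exactly once; the surviving terms are indexed by pairs of permutations $(\tau,\sigma)\in S_d\times S_d$ via $(i_1,\dots,i_d)=(\tau(1),\dots,\tau(d))$ and analogously for $\sigma$. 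This yields
\[
\partial_{t_1}\!\cdots\!\partial_{t_d}\,\partial_{s_1}\!\cdots\!\partial_{s_d} f \;=\; \frac{1}{d!}\sum_{\tau,\sigma\in S_d}\D{\bX_{\tau(1)}\bY_{\sigma(1)},\dots,\bX_{\tau(d)}\bY_{\sigma(d)}}.
\]
Using permutation invariance of $\D{}$ with the reindexing $\rho=\tau^{-1}$ rewrites each summand as $\D{\bX_1\bY_{\sigma\tau^{-1}(1)},\dots,\bX_d\bY_{\sigma\tau^{-1}(d)}}$; setting $\pi := \sigma\tau^{-1}$ and observing that for fixed $\tau$ the map $\sigma\mapsto\pi$ is a bijection on $S_d$, the outer sum over $\tau$ contributes a factor of $d!$ that cancels the $1/d!$, leaving exactly $\sum_{\pi\in S_d}\D{\bX_1\bY_{\pi(1)},\dots,\bX_d\bY_{\pi(d)}}$.

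The main obstacle is the reindexing step: one has to confirm that permutation invariance lets us reduce the double sum over $S_d\times S_d$ to a single sum over $S_d$ scaled by $d!$, and that this factor precisely cancels the $1/d!$ coming from Example \ref{ex:diagonal}. Apart from this bookkeeping, the argument is just multilinear expansion, partial differentiation, and the familiar multiplicativity of $\det$. I chose this route over the alternative of reducing to rank-one matrices via Lemma \ref{lem:multilinearity} and Lemma \ref{lem:rank1}, because the polynomial identity makes the multiplicative structure explicit without requiring a rank reduction or explicit factorization $\bX_i=u_iv_i^*$.
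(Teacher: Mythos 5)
Your proof is correct and takes essentially the same route as the paper's: both express $\D{\bX_1,\dots,\bX_d}\D{\bY_1,\dots,\bY_d}$ as the $2d$-fold mixed partial derivative of $\mydet{\sum_{i,j} t_i s_j \bX_i\bY_j}$ via Lemma \ref{lem:def2} and multiplicativity of the determinant, then identify the coefficients that survive differentiation. You simply carry out explicitly the combinatorial bookkeeping (pairs of permutations, reindexing by $\pi=\sigma\tau^{-1}$, cancellation of the $d!$) that the paper leaves implicit.
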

\begin{proof}
Using the characterization in Lemma~\ref{lem:def2}, we can write
\begin{align*}
\D{\bX_1, \dots, \bX_d}\D{\bY_1, \dots, \bY_d}
&= 
\frac{\partial^d}{(\partial t_1) \dots (\partial t_d)} \frac{\partial^d}{(\partial s_1) \dots (\partial s_d)} \mydet{ \sum_{i=1}^d t_i \bX_i } \mydet{ \sum_{i=1}^d s_i \bY_i } \\
&= 
\frac{\partial^d}{(\partial t_1) \dots (\partial t_d)} \frac{\partial^d}{(\partial s_1) \dots (\partial s_d)} \mydet{ \sum_{i=1}^d \sum_{j=1}^d t_i s_j \bX_i\bY_j}.
\end{align*}
Expanding the determinant using multilinearity will result in a homogeneous polynomial of degree $2d$ in the variables $s_1, \dots, s_d, t_1, \dots, t_d$, where each term will be of the form 
\[
\D{\bX_{i_1}\bY_{j_1}, \dots, \bX_{i_d}\bY_{j_d}} s_{i_1}\dots s_{i_d}t_{j_1} \dots t_{j_d} .
\]
The lemma then follows by noticing that the coefficients $\D{\bX_1\bY_{\pi(1)}, \dots, \bX_d\bY_{\pi(d)}}$ for some permutation $\pi$
are exactly the ones that will remain after the differentiations.
\end{proof}
The following examples are basic applications of Lemma~\ref{lem:mult}:
\begin{example}
\label{ex:det}
\[
\begin{aligned}
d! \mydet{\bA} \D{\bX_1, \dots, \bX_d}
&= \D{\bA[d]}\D{\bX_1, \dots, \bX_d}
\\ 
&= d! \D{\bA\bX_1, \dots, \bA\bX_d} 
= d! \D{\bX_1\bA, \dots, \bX_d\bA}.
\end{aligned}
\]
\end{example}
\begin{example}
\label{ex:tr}
\[
\begin{aligned}
\D{\bA, \bI[d-1]}\D{\bX_1, \bX_2, \dots, \bX_d} 
&= (d-1)! \bigg( \D{\bA\bX_1, \bX_2, \dots, \bX_d} \\
& + \D{\bX_1, \bA\bX_2, \dots, \bX_d} + \dots + \D{\bX_1, \bX_2, \dots, \bA\bX_d} \bigg)
\end{aligned}
\]
\end{example}

We now extend Definition~\ref{def:md} slightly so as to ease notation even further.
\begin{definition}
For $d \times d$ matrices $\bX_1, \dots, \bX_k$ where $k \leq d$, we will write
\[
\myD{\bX_1, \dots, \bX_k} = \frac{\D{\bX_1, \dots, \bX_k, \bI[d-k]}}{(d-k)!}
\]
where $\bI$ is the $d \times d$ identity matrix.
\end{definition}
In particular, note that $\D{}$ and $\myD{}$ are equivalent when there are $d$ matrices and that $\myD{\varnothing} = 1$.
Using Example~\ref{ex:det} and Example~\ref{ex:tr} above, we get the following two corollaries:

\begin{corollary}
\label{shift}
Let $\bA, \bX_1, \dots, \bX_k$ be $d \times d$ matrices with $k \leq d$ and with $\bA$ positive semidefinite.
Then
\[
\myD{\bA\bX_1, \dots, \bA\bX_k} = \myD{\bA^{1/2}\bX_1\bA^{1/2}, \dots, \bA^{1/2}\bX_k\bA^{1/2}}.
\]
\end{corollary}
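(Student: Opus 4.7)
My plan is to prove the identity first for $\bA$ positive definite, where a direct matrix factorization suffices, and then extend to all positive semidefinite $\bA$ by continuity.

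By Lemma~\ref{lem:def2},
\[
\myD{\bA\bX_1, \ldots, \bA\bX_k} = \frac{1}{(d-k)!}\frac{\partial^d}{\partial t_1 \cdots \partial t_d}\mydet{\sum_{i=1}^k t_i \bA\bX_i + \sum_{i=k+1}^d t_i \bI},
\]
with the analogous expression for the right-hand side (replacing $\bA\bX_i$ by $\bA^{1/2}\bX_i\bA^{1/2}$). When $\bA$ is invertible, the commutativity of $\bA^{1/2}$ with $\bA^{-1}$ gives $\bA^{1/2}\bA^{-1}\bA^{1/2} = \bI$, and I may factor $\bA$ out of the left-hand side and $\bA^{1/2}$ out of both sides of the right-hand side:
\[
\sum_{i=1}^k t_i \bA\bX_i + \sum_{i=k+1}^d t_i \bI = \bA\left(\sum_{i=1}^k t_i \bX_i + \sum_{i=k+1}^d t_i \bA^{-1}\right),
\]
\[
\sum_{i=1}^k t_i \bA^{1/2}\bX_i\bA^{1/2} + \sum_{i=k+1}^d t_i \bI = \bA^{1/2}\left(\sum_{i=1}^k t_i \bX_i + \sum_{i=k+1}^d t_i \bA^{-1}\right)\bA^{1/2}.
\]
Taking determinants, both expressions equal $\mydet{\bA} \cdot \mydet{\sum_{i=1}^k t_i \bX_i + \sum_{i=k+1}^d t_i \bA^{-1}}$, so the corresponding partial derivatives in $t_1, \ldots, t_d$ agree. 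This establishes the identity whenever $\bA$ is invertible.

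For arbitrary positive semidefinite $\bA$, I extend by continuity. Both sides of the identity are continuous functions of $\bA$ on the PSD cone, since the mixed discriminant is polynomial in the entries of its matrix arguments and the square-root map $\bA \mapsto \bA^{1/2}$ is continuous there. Approximating $\bA$ by the positive definite matrices $\bA + \ve\bI$ and letting $\ve \to 0^+$ transfers the identity to the singular case. The only subtlety is this continuity step in the singular regime; the invertible case itself amounts to a one-line matrix factorization, which is where the analytic content lies.
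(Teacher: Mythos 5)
Your proof is correct, and it follows the same underlying idea as the paper's --- factoring $\bA^{1/2}$ out of a determinant on both sides and using multiplicativity of $\det$ --- but it routes the computation differently and is more careful at one point. The paper works at the level of mixed discriminants: it invokes Example~\ref{ex:det} (a consequence of the Multiplication Lemma~\ref{lem:mult}) to write $\D{\bA\bX_1,\dots,\bA\bX_k,\bI[d-k]} = \mydet{\bA^{1/2}}\,\D{\bA^{1/2}\bX_1,\dots,\bA^{1/2}\bX_k,\bA^{-1/2}[d-k]}$ and then reabsorbs the determinant factor by multiplying every argument on the right by $\bA^{1/2}$. You instead go back to the differential characterization of Lemma~\ref{lem:def2} and match the two determinantal polynomials in $t_1,\dots,t_d$ directly, which is slightly more elementary since it bypasses Lemma~\ref{lem:mult} entirely. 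More importantly, both your invertible-case argument and the paper's proof use $\bA^{-1}$ (respectively $\bA^{-1/2}$), which does not exist for singular $\bA$ even though the corollary is stated for positive semidefinite $\bA$; you supply the missing continuity argument (approximating by $\bA+\ve\bI$ and using that both sides are polynomial in the matrix entries and that $\bA\mapsto\bA^{1/2}$ is continuous on the PSD cone), whereas the paper leaves this step implicit. That is a genuine, if small, improvement in rigor.
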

\begin{proof}
\begin{align*}
\myD{\bA\bX_1, \dots, \bA\bX_k} 
&= \frac{1}{(d-k)!} \D{\bA\bX_1, \dots, \bA\bX_k, \bI[d-k]} \\
&= \frac{1}{(d-k)!} \mydet{\bA^{1/2}}\D{\bA^{1/2}\bX_1, \dots, \bA^{1/2}\bX_k, \bA^{-1/2}\bI[d-k]} \\
&= \frac{1}{(d-k)!} \D{\bA^{1/2}\bX_1\bA^{1/2} , \dots, \bA^{1/2}\bX_k\bA^{1/2}, \bA^{-1/2}\bI\bA^{1/2}[d-k]} \\
&= \myD{\bA^{1/2}\bX_1\bA^{1/2}, \dots, \bA^{1/2}\bX_k\bA^{1/2}}
\end{align*}
\end{proof}

\begin{corollary}
\label{expand}
If $\bA, \bX_1, \dots, \bX_k$ are $d \times d$ matrices with $k \leq d$, then 
\[
\myD{\bA} \myD{\bX[k]} = k \myD{\bA\bX, \bX[k-1]} + \myD{\bA, \bX[k]}.
\]
\end{corollary}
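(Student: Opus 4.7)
The plan is to derive this identity directly from Example \ref{ex:tr} by specializing its inputs. Specifically, I would set $\bX_1 = \cdots = \bX_k = \bX$ and $\bX_{k+1} = \cdots = \bX_d = \bI$, so that the product $\D{\bA,\bI[d-1]} \D{\bX_1, \dots, \bX_d}$ on the left of Example \ref{ex:tr} becomes (after applying Example \ref{ex:mytr} to one factor and the definition of $\myD{}$ to the other)
\[
(d-1)!\,\myD{\bA} \cdot (d-k)!\,\myD{\bX[k]}.
\]

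Next I would analyze the $d$ terms on the right of Example \ref{ex:tr} according to where $\bA$ is inserted. In the first $k$ positions (which originally held $\bX$), each term equals $\D{\bA\bX, \bX[k-1], \bI[d-k]}$ by permutation invariance, and so contributes $(d-k)!\,\myD{\bA\bX, \bX[k-1]}$; there are $k$ such terms. In the last $d-k$ positions (which originally held $\bI$), each term equals $\D{\bA, \bX[k], \bI[d-k-1]}$ since $\bA\bI = \bA$, which contributes $(d-k-1)!\,\myD{\bA, \bX[k]}$; there are $d-k$ such terms. Factoring, the right side of Example \ref{ex:tr} becomes
\[
(d-1)!(d-k)! \bigl[ k\,\myD{\bA\bX, \bX[k-1]} + \myD{\bA, \bX[k]} \bigr],
\]
where the factor $d-k$ from the second group cancels against the $(d-k-1)!$ to produce $(d-k)!$.

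Dividing both sides by $(d-1)!(d-k)!$ gives the claimed identity. The boundary case $k=d$ is handled by the convention that the coefficient $d-k=0$ annihilates the (otherwise ill-defined) term $\myD{\bA, \bX[d]}$, so the formula holds as stated for $k \leq d$.

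The computation is essentially bookkeeping; the only mild obstacle is keeping track of the factorials that arise from converting between $\D{}$ (which carries $(d-k)!$ factors from the identity block) and $\myD{}$ (which absorbs them), and ensuring the grouping of the $d$ summands into the two symmetry classes produces exactly the two terms on the right.
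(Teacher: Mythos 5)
Your proof is correct and follows essentially the same route as the paper: both expand $\D{\bA,\bI[d-1]}\,\D{\bX[k],\bI[d-k]}$ via the multiplication lemma and group the resulting terms according to whether $\bA$ multiplies one of the $k$ copies of $\bX$ or one of the $d-k$ copies of $\bI$ (you route this through Example \ref{ex:tr}, the paper invokes Lemma \ref{lem:mult} directly and counts the $k(d-1)!$ versus $(d-k)(d-1)!$ permutations, but the computation is identical). Your explicit remark on the $k=d$ boundary case is a small point the paper leaves implicit.
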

\begin{proof}
By definition, 
\[
\myD{\bA} \myD{\bX[k]} 
= \frac{1}{(d-k)!}\frac{1}{(d-1)!} \D{\bA, \bI[d-1]} \D{\bX[k], \bI[d-k]}
\]
and note that there are $k(d-1)!$ permutations $\pi$ where $\pi(1) \leq k$ and $(d-k)(d-1)!$ others.
So by Lemma~\ref{lem:mult}, we have
\begin{align*}
\D{\bA, \bI[d-1]} \D{\bX[k], \bI[d-k]} 
&= k(d-1)! \D{\bA\bX, \bX[k-1], \bI[d-k]} \\
&\qquad+ (d-k)(d-1)! \D{\bA, \bX[k], \bI[d-k-1]}
\end{align*}
and combining the two gives
\begin{align*}
\myD{\bA} \myD{\bX[k]}  
&= \frac{k}{(d-k)!}\D{\bA\bX, \bX[k-1], \bI[d-k]} \\
&\qquad+ \frac{1}{(d-k-1)!}\D{\bA, \bX[k], \bI[d-k-1]} \\
&= k \myD{\bA\bX, \bX[k-1]} + \myD{\bA, \bX[k]}
\end{align*}
as claimed.
\end{proof}
Lastly, we will require two straightforward inequalities:

\begin{lemma}[Positivity]
If $\bX_1, \dots, \bX_k$ are $d \times d$ positive semidefinite matrices with $k \leq d$, then 
\[
\myD{\bX_1, \dots, \bX_k} \geq 0.
\]
\end{lemma}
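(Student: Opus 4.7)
The plan is to reduce to the rank-one case, where Lemma \ref{lem:rank1} identifies the mixed discriminant with an honest determinant of a positive semidefinite matrix, which is nonnegative.

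First I would write each positive semidefinite $\bX_i$ as a finite sum of rank-one PSD matrices, say $\bX_i = \sum_{j} u_{i,j} u_{i,j}^{*}$ (via the spectral decomposition). I would also decompose the identity as $\bI = \sum_{\ell=1}^{d} e_{\ell} e_{\ell}^{*}$, so that the extra $(d-k)$ copies of $\bI$ appearing in the definition of $\myD{\cdot}$ are also expressed as sums of rank-one PSD matrices. Then, by the multilinearity of the mixed discriminant (Lemma \ref{lem:multilinearity}, applied in each slot), the quantity $\D{\bX_1,\dots,\bX_k,\bI[d-k]}$ expands as a (finite) sum of terms of the form
\[
\D{u_{1,j_1}u_{1,j_1}^{*},\,\dots,\,u_{k,j_k}u_{k,j_k}^{*},\,e_{\ell_1}e_{\ell_1}^{*},\,\dots,\,e_{\ell_{d-k}}e_{\ell_{d-k}}^{*}},
\]
each of whose entries is a rank-one PSD matrix.

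By Lemma \ref{lem:rank1}, each such term equals $\mydet{M}$ where $M$ is the sum of the rank-one PSD matrices appearing as its arguments. Since a sum of PSD matrices is PSD, and the determinant of a PSD matrix is nonnegative, every term in the expansion is $\ge 0$. Therefore $\D{\bX_1,\dots,\bX_k,\bI[d-k]} \geq 0$, and dividing by $(d-k)! > 0$ yields $\myD{\bX_1,\dots,\bX_k} \geq 0$.

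There is no real obstacle here: the only small point to check is that Lemma \ref{lem:rank1} is truly applicable, i.e.\ that we always have exactly $d$ rank-one matrices as arguments; this is precisely why including the $(d-k)$ copies of $\bI$ (each written as a sum of the $d$ rank-one projections $e_\ell e_\ell^*$) rather than trying to apply rank-one reasoning directly to $\myD{\bX_1,\dots,\bX_k}$ is the right move.
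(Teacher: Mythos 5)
Your proof is correct and is essentially the paper's own argument: reduce to $d$ arguments by treating the padding copies of $\bI$ as sums of the rank-one projections $e_\ell e_\ell^*$, expand by multilinearity into mixed discriminants of rank-one PSD matrices, and apply Lemma~\ref{lem:rank1} to identify each term with the (nonnegative) determinant of a PSD matrix. You have merely written out explicitly the spectral decompositions that the paper leaves implicit in the phrase ``the general case follows by multilinearity.''
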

\begin{proof}
Note that it suffices to prove this when $k = d$ since 
\[
\myD{\bX_1, \dots, \bX_k} = \frac{1}{(d-k)!} \D{\bX_1, \dots, \bX_k, \bI[d-k]}
\]
and so the left hand side is nonnegative exactly when the right hand side is.
However the case when $X_1, \dots, X_d$ have rank 1 follows from Lemma~\ref{lem:rank1}, and then the general case follows by multilinearity.
\end{proof}
 
The following fact is a special case of a result due to Artstein-Avidan, Florentin, and Ostrover \cite[Theorem 1.1]{afo}.

\begin{lemma}
\label{lem:submult}
If $\bA$ and $\bB$ are $d \times d$ positive semidefinite matrices with $d \geq 2$, then
\[
\myD{\bA}\myD{\bB} \geq \myD{\bA, \bB}.
\]
\end{lemma}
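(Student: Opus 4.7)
The plan is to reduce the claimed inequality to a completely standard trace inequality by exploiting the expansion identity in Corollary~\ref{expand}. The key observation is that when we take $\bX = \bB$ and $k = 1$ in that corollary, the terms collapse in a particularly clean way. Specifically, Corollary~\ref{expand} gives
\[
\myD{\bA}\myD{\bB} = 1 \cdot \myD{\bA\bB} + \myD{\bA, \bB},
\]
so that proving $\myD{\bA}\myD{\bB} \geq \myD{\bA, \bB}$ is equivalent to proving $\myD{\bA\bB} \geq 0$.

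Next I would invoke Example~\ref{ex:mytr} to identify $\myD{\bA\bB}$ concretely. Since $\myD{\bA\bB}$ is, by definition, $\D{\bA\bB, \bI[d-1]}/(d-1)!$, Example~\ref{ex:mytr} tells us that $\myD{\bA\bB} = \Tr(\bA\bB)$. It only remains to show that this trace is nonnegative when $\bA$ and $\bB$ are positive semidefinite. This is the familiar fact that the trace of a product of two positive semidefinite matrices is nonnegative: using the cyclicity of the trace, $\Tr(\bA\bB) = \Tr(\bA^{1/2}\bB\bA^{1/2})$, and $\bA^{1/2}\bB\bA^{1/2}$ is positive semidefinite (hence has nonnegative eigenvalues and therefore nonnegative trace).

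Combining these two steps finishes the proof. There is no real obstacle here; the whole content is recognizing that Corollary~\ref{expand} specializes to a two-term identity for $k=1$, after which the lemma is equivalent to the most elementary positivity fact for traces of products of PSD matrices. The hypothesis $d \geq 2$ is exactly what is needed so that the expression $\myD{\bA, \bB}$ makes sense (we need at least two slots before padding with identities).
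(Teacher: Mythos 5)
Your proof is correct and rests on exactly the same identity as the paper's, namely $\myD{\bA}\myD{\bB} - \myD{\bA,\bB} = \Tr(\bA\bB) \geq 0$; the only cosmetic difference is that you obtain it as the $k=1$ case of Corollary~\ref{expand}, whereas the paper re-derives it directly from Lemma~\ref{lem:mult}.
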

\begin{proof}
By Lemma~\ref{lem:mult}, we have 
\begin{align*}
((d-1)!)^2 \myD{\bA}\myD{\bB}
&= \D{\bA, \bI[d-1]}\D{\bB, \bI[d-1]} 
\\&= (d-1)!\D{\bA\bB, \bI[d-1]} + (d! - (d-1)!)\D{\bA, \bB, \bI[d-2]}
\\&= ((d-1)!)^2\Tr(\bA\bB) + (d-1)(d-1)!\D{\bA, \bB, \bI[d-2]}
\\&= ((d-1)!)^2\Tr(\bA\bB) + ((d-1)!)^2\myD{\bA, \bB}.
\end{align*}
Rearranging gives
\[
\myD{\bA}\myD{\bB} - \myD{\bA, \bB} = \Tr(\bA\bB)
\]
which is nonnegative whenever $\bA$ and $\bB$ are positive semidefinite.
\end{proof}

Finally, we would like to mention a recent characterization of mixed discriminants by  Florentin, Milman, and Schneider \cite[Theorem 2]{fms}. Up to a multiplicative constant, the mixed discriminant is the unique function on $d$-tuples of positive semidefinite matrices that is multilinear, non-negative, and which is zero if two of its arguments are proportional matrices of rank one. 


\section{Application to polynomials}\label{sec:apply}

Let $\CH \subset \C = \{ z : \Im(z) > 0 \}$ (where $\Im$ denotes the ``imaginary part'').
A polynomial $p \in \C[x_1, \dots, x_m]$ is called {\em stable} if $\vec{y} \in \CH^{m}$ implies $p(\vec{y}) \neq 0$.
A polynomial is called {\em real stable} if it is stable and all of its coefficients are real.

The connection between mixed discriminants and real stable polynomials can be derived from an incredibly useful result of Helton and Vinnikov \cite{HV}.  
Here, we will use an extension that specializes to our case of interest \cite[Corollary 6.7]{BB}:
\begin{theorem}\label{thm:HV}
Let $p(x, y)$ be a degree $d$ real stable polynomial.
Then there exist $d \times d$ real symmetric matrices $\bA, \bB, \bC$ such that
\[
p(x, y) = \pm \mydet{x\bA + y\bB + \bC}.
\]
Furthermore, $\bA$ and $\bB$ can be taken to be positive semidefinite.
\end{theorem}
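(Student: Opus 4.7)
The plan is to derive this statement from the celebrated theorem of Helton and Vinnikov on determinantal representations of hyperbolic (``real zero'') plane curves, together with the observation that bivariate real stability is equivalent to simultaneous hyperbolicity of the homogenization in several directions. First I would homogenize: given a degree-$d$ real stable polynomial $p(x, y)$, form the homogeneous polynomial
\[
P(x, y, z) := z^d \, p(x/z,\, y/z)
\]
of degree $d$ in three variables. A direct check using real stability of $p$ in each variable separately (which follows from bivariate real stability by letting the imaginary part of one variable tend to zero and applying Hurwitz's theorem) shows that $P$ is hyperbolic with respect to every direction of the form $(a, b, 0)$ with $a, b \geq 0$ not both zero; in particular it is hyperbolic with respect to each of $(1, 0, 0)$, $(0, 1, 0)$, and $(1, 1, 0)$.

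Next I would invoke the projective form of the Helton--Vinnikov theorem: every real hyperbolic homogeneous polynomial of degree $d$ in three variables admits a determinantal representation
\[
P(x, y, z) = \pm \det(xA + yB + zC),
\]
with $A, B, C$ real symmetric $d \times d$ matrices, such that for each direction of hyperbolicity $e = (e_1, e_2, e_3)$, the matrix $e_1 A + e_2 B + e_3 C$ is positive semidefinite (after fixing an overall sign). Setting $z = 1$ yields the desired identity $p(x, y) = \pm \det(xA + yB + C)$, and applying the positivity conclusion along $(1, 0, 0)$ and $(0, 1, 0)$ gives that $A$ and $B$ are each positive semidefinite.

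The main obstacle is the compatibility of the positivity conditions within a \emph{single} determinantal representation. Helton--Vinnikov representations are not unique: the set of admissible triples $(A, B, C)$ is parametrized by a moduli space of line bundles on the associated real algebraic curve, and a priori different representatives could yield positivity along different directions. Showing that one representation can be chosen so that $A$ and $B$ are simultaneously positive semidefinite is precisely the substance of \cite[Corollary 6.7]{BB}; it leverages the full two-dimensional cone of hyperbolicity directions guaranteed by bivariate real stability to pin down a representation compatible with both positivity constraints at once. Once such a representation is selected, specializing to $z = 1$ completes the argument.
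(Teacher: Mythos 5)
The paper offers no proof of this statement: it is quoted verbatim as \cite[Corollary 6.7]{BB}, an extension of the Helton--Vinnikov theorem \cite{HV}. Your outline (homogenize, apply the projective Helton--Vinnikov representation, dehomogenize at $z=1$) is the standard route to that corollary, and you correctly identify that the only nontrivial content beyond \cite{HV} is arranging \emph{simultaneous} positive semidefiniteness of $A$ and $B$ within a single representation --- a step you then defer back to \cite[Corollary 6.7]{BB} itself. Since the paper also simply cites that result, your proposal is consistent with the paper, though as a self-contained argument it is circular at exactly the point that matters.

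One mathematical correction: your claim that $P(x,y,z)=z^d p(x/z,y/z)$ is hyperbolic with respect to \emph{every} direction $(a,b,0)$ with $a,b\ge 0$ not both zero is false. Hyperbolicity with respect to $e$ requires $P(e)\neq 0$, and for $p(x,y)=x$ (real stable of degree $1$) one has $P(0,1,0)=0$; more generally $P(1,0,0)$ or $P(0,1,0)$ vanishes whenever the individual degree of $p$ in $x$ or in $y$ is less than $d$. What real stability actually gives is hyperbolicity with respect to $(a,b,0)$ for all $a,b>0$ (the leading form of a real stable polynomial is real stable, hence a product of linear forms $\alpha x+\beta y$ with $\alpha\beta\ge 0$, hence nonvanishing on the open positive orthant). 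The directions $(1,0,0)$ and $(0,1,0)$ lie only in the \emph{closure} of the hyperbolicity cone, so positive semidefiniteness of $A$ and $B$ is obtained by normalizing $aA+bB\succ 0$ for interior directions and passing to the limit $b\to 0$ (respectively $a\to 0$) using closedness of the PSD cone --- not by applying the representation theorem directly at those boundary directions. Since the theorem asserts only semidefiniteness of $A$ and $B$, this limiting argument is exactly what is needed, but your phrasing skips it.
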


\begin{remark}
We should note that the representation via real symmetric matrices in Theorem~\ref{thm:HV} is actually quite a bit stronger than is needed for the results in this paper.
For our purposes, it would suffice to have a representation using Hermitian matrices (a far weaker constraint, both theoretically and computationally, see \cite{HVcompute}).
\end{remark}

We would like to understand the behavior of a given real stable polynomial at a selected reference point $\vec{z}$.
Recall the following definition from \cite{MSS}:
\begin{definition} 
Let $Q(x_1,\ldots,x_m)$ be a multivariate polynomial. 
We say that a reference point $\vec{z}\in\R^m$ is {\em above the roots} of $Q$ if 
\[
Q(\vec{z}+\vec{s})>0\qquad\textrm{for all}\qquad \vec{s}=(s_1,\ldots,s_m)\in\R^m, s_i\ge 0,
\]
i.e., if $Q$ is positive on the nonnegative orthant with origin at $\vec{z}$.
\end{definition}

In the case that a reference point is above the roots of a polynomial, a more specific version of Theorem~\ref{thm:HV} can be obtained.

\begin{corollary}
\label{cor:PSD}
Let $p(x, y)$ be a degree $d$ real stable polynomial with $(x_0, y_0)$ above the roots of $p$.
Then there exist $d \times d$ real symmetric matrices $\bA, \bB, \bC$ such that 
\[
p(x, y) = \mydet{x\bA + y\bB + \bC}
\]
and the following hold:
\begin{itemize}
 \item $\bA$ and $\bB$ are positive semidefinite
 \item $\bA + \bB$ is positive definite
 \item $\bM = x_0\bA + y_0\bB + \bC$ is positive definite
\end{itemize}
\end{corollary}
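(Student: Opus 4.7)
The plan is to start from Theorem~\ref{thm:HV} and extract both the positive definiteness of $A+B$ and of $M$ from the ``above the roots'' hypothesis combined with the fact that $\deg p = d$. First I would apply Theorem~\ref{thm:HV} to produce real symmetric matrices $A, B, C$, with $A, B \succeq 0$, such that $p(x,y) = \pm \det(xA + yB + C)$.

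For positive definiteness of $A+B$, I would argue by contradiction. Suppose some nonzero $v$ satisfies $(A+B)v = 0$; since $A$ and $B$ are positive semidefinite, this forces $Av = Bv = 0$. After an orthogonal change of basis placing $v$ as the last standard basis vector (which preserves the form of the representation), the matrices $A$ and $B$ have vanishing last row and column, so the last row of $xA + yB + C$ consists of entries of $C$ alone and is independent of $(x,y)$. Cofactor expansion of $\det(xA + yB + C)$ along that last row yields a sum in which each minor is the determinant of a $(d-1)\times(d-1)$ matrix whose columns are polynomials of total degree at most one in $(x,y)$, with one of them constant whenever the deleted column index is less than $d$. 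Column-multilinearity of the determinant then shows each such minor has total degree at most $d-1$, so $p$ itself has total degree at most $d-1$, contradicting $\deg p = d$. Hence $A + B \succ 0$.

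To pin down the sign and show $M \succ 0$, I would restrict to the diagonal ray $s = t$. The ``above the roots'' hypothesis yields
\[
p(x_0 + t, y_0 + t) = \pm \det\bigl(t(A+B) + M\bigr) > 0 \quad \text{for all } t \geq 0.
\]
Since $A+B \succ 0$, the right-hand side is a polynomial in $t$ of degree exactly $d$ with leading coefficient $\pm \det(A+B)$; as $\det(A+B) > 0$, letting $t \to \infty$ forces the sign to be $+$. Now set $P = (A+B)^{1/2}$, which is invertible, and $N = P^{-1} M P^{-1}$, a real symmetric matrix. Factoring gives $\det(t(A+B) + M) = \det(P)^2 \det(tI + N)$, so $\det(tI + N) > 0$ for every $t \geq 0$. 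Writing $\det(tI + N) = \prod_i (t + \lambda_i)$ over the real eigenvalues $\lambda_i$ of $N$, strict positivity at each $t \geq 0$ rules out any $\lambda_i \leq 0$. Therefore $N \succ 0$, and hence $M = PNP \succ 0$.

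The main obstacle is the step showing $A + B \succ 0$: Helton--Vinnikov guarantees the representation and the semidefiniteness of $A, B$ but not the strict positive definiteness of $A + B$ directly, so this has to be wrung out from the degree-matching between $p$ and the $d \times d$ determinantal representation. Once that is in place, the sign determination and the positive definiteness of $M$ follow routinely from restricting real stability to the diagonal ray emanating from $(x_0, y_0)$.
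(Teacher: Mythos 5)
Your proof is correct and follows the paper's overall route (Helton--Vinnikov plus restriction to the diagonal ray from $(x_0,y_0)$), but two of your sub-arguments differ from the paper's in a way worth noting. For $\bA+\bB\succ 0$, the paper simply observes that $q(t)=p(t,t)=\pm\mydet{t(\bA+\bB)+\bC}$ must have degree $d$ in $t$ because $p$ has total degree $d$, whence $\mydet{\bA+\bB}\neq 0$; this quietly uses the fact that the top-degree homogeneous part of $p$ does not vanish on the diagonal (true for real stable polynomials, but asserted without comment). Your kernel-vector argument --- $(\bA+\bB)v=0$ forces $\bA v=\bB v=0$ by positive semidefiniteness, and then cofactor expansion along the constant row shows $\mydet{x\bA+y\bB+\bC}$ has total degree at most $d-1$ --- reaches the same conclusion directly from $\deg p=d$ without needing that fact, so it is somewhat more self-contained. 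For $\bM\succ 0$, the paper tracks the minimum eigenvalue of $\bM_t=(x_0+t)\bA+(y_0+t)\bB+\bC$ by continuity from large $t$ down to $t=0$, using $\mydet{\bM_t}=p(x_0+t,y_0+t)>0$; your congruence $\det\bigl(t(\bA+\bB)+\bM\bigr)=\det(P)^2\det(tI+N)$ with $N=P^{-1}\bM P^{-1}$ and the factorization over the real eigenvalues of $N$ is an equally valid, purely algebraic alternative. The sign determination is identical in both. No gaps.
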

\begin{proof}
 Let 
 \[
  p(x, y) = \pm \mydet{x\bA + y\bB + \bC}
 \]
be the representation provided by Theorem~\ref{thm:HV}.
Now let 
\[
 q(t) = p(t, t) = \pm \mydet{t(\bA + \bB) + \bC}.
\]
Since $p$ has (total) degree $d$, $q$ must have degree $d$ (in $t$) and so we must have $\mydet{\bA + \bB} \neq 0$.
Given that $\bA$ and $\bB$ are each positive semidefinite, this ensures $\bA + \bB$ is positive definite.
Furthermore, since $(x_0, y_0)$ is above the roots of $p$, we have $q(t) > 0$ for all for $t \geq \max \{ x_0, y_0 \} $.
Hence $q$ must have a positive first coefficient, which means
\[
 p(x, y) = \mydet{x\bA + y\bB + \bC}.
\]
Thus it remains to show that $\bM = x_0\bA + y_0\bB + \bC$ is positive definite.
To see this, consider the matrices
\[
 \bM_t = (x_0 + t)\bA + (y_0 + t)\bB + \bC.
\]
Since $\bA + \bB$ is positive definite, $\bM_t$ is positive definite for large enough $t$.
Now note that for $t \geq 0$, we have 
\[
 \mydet{\bM_t} = p(x_0 + t, y_0 + t) > 0
\]
since $(x_0, y_0)$ is above the roots of $p$.
This implies that the minimum eigenvalue of $\bM_t$ (which is a continuous function in $t$) remains above $0$ for all $t \geq 0$, and so (in particular) $\bM=\bM_0$ is positive definite.
\end{proof}

For the remainder of the section, we will fix a degree $d$ real stable polynomial $p(x, y)$ and a reference point $(x_0, y_0)$ above the roots of $p$.
We also fix the matrices $\bA, \bB, \bC$ provided by Corollary~\ref{cor:PSD} and set $\bM = x_0\bA + y_0\bB + \bC$.
Since $\bM$ is positive definite, it has a well defined square root, and so we can define the matrices
\[
\hA = \bM^{-1/2} \bA \bM^{-1/2}
\AND
\hB = \bM^{-1/2} \bB \bM^{-1/2}.
\]
Note that $\hA$ and $\hB$ are both positive semidefinite, since $\bA, \bB$ and $\bM$ are.

\begin{corollary}\label{cor:derivatives}
The matrices $\hA$ and $\hB$ defined above satisfy
\[
\frac{\partial^i}{(\partial x)^i}\frac{\partial^j}{(\partial y)^j} p(x_0, y_0) =  \myD{ \hA[i], \hB[j] } p(x_0, y_0)
\]
for all $i + j \leq d$. 
\end{corollary}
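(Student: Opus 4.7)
The plan is to compute a Taylor expansion of $p(x,y)$ around $(x_0,y_0)$ and observe that its coefficients are precisely the mixed discriminants on the right-hand side. The bridge between the determinant and the mixed discriminant is Example~\ref{ex:diagonal} together with multilinearity (Lemma~\ref{lem:multilinearity}).

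First I would write
\[
x\bA + y\bB + \bC = \bM + (x-x_0)\bA + (y-y_0)\bB,
\]
and use that $\bM$ is positive definite (so $\bM^{1/2}$ exists and is invertible) to factor
\[
p(x,y) = \mydet{\bM}\mydet{\bI + (x-x_0)\hA + (y-y_0)\hB}.
\]
Here I pull $\bM^{1/2}$ out on each side of the matrix inside the determinant, using $\mydet{\bM^{-1/2}\bA\bM^{-1/2}} \cdot \mydet{\bM} = \mydet{\bA}$-style manipulations that only require multiplicativity of $\det$.

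Next, for $u,v\in\R$ set $\bX = \bI + u\hA + v\hB$ and use Example~\ref{ex:diagonal} to write $d!\,\mydet{\bX} = \D{\bX[d]}$. Expanding each of the $d$ slots by multilinearity (Lemma~\ref{lem:multilinearity}) and grouping by how many slots contain $\hA$, how many contain $\hB$, and how many contain $\bI$, I obtain
\[
d!\,\mydet{\bI + u\hA + v\hB} = \sum_{i+j+k=d} \binom{d}{i,j,k} u^{i}v^{j}\,\D{\hA[i],\hB[j],\bI[k]},
\]
where the multinomial coefficient counts the $\binom{d}{i,j,k}$ ways of assigning the three types of arguments to the $d$ slots (using permutation invariance of $\D{}$). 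Recognizing $\D{\hA[i],\hB[j],\bI[d-i-j]} = (d-i-j)!\,\myD{\hA[i],\hB[j]}$ by definition of $\myD{}$ and simplifying the multinomial coefficients yields the clean identity
\[
\mydet{\bI + u\hA + v\hB} = \sum_{i+j\le d} \frac{u^{i}v^{j}}{i!\,j!}\,\myD{\hA[i],\hB[j]}.
\]

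Substituting $u=x-x_0$, $v=y-y_0$ and multiplying by $\mydet{\bM} = p(x_0,y_0)$ gives the Taylor expansion
\[
p(x,y) = p(x_0,y_0)\sum_{i+j\le d} \frac{(x-x_0)^{i}(y-y_0)^{j}}{i!\,j!}\,\myD{\hA[i],\hB[j]}.
\]
Reading off the coefficient of $(x-x_0)^i(y-y_0)^j$ and comparing with the Taylor formula for $p$ at $(x_0,y_0)$ then delivers the claimed identity for each $i+j\le d$. The only step that requires any care is the combinatorial bookkeeping when expanding $\D{\bX[d]}$, but this is routine given Lemma~\ref{lem:multilinearity} and the symmetry of $\D{}$; there is no genuine analytic obstacle.
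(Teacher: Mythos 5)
Your proposal is correct and follows essentially the same route as the paper: Taylor-expand $p$ at $(x_0,y_0)$, convert the determinant to $\D{(\bM+\epsilon\bA+\delta\bB)[d]}$ via Example~\ref{ex:diagonal}, expand by multilinearity, and match coefficients. The only (cosmetic) difference is that you factor $\bM^{1/2}$ out of the determinant before expanding, using only multiplicativity of $\det$, whereas the paper expands first and then applies Corollary~\ref{shift} to replace $\bA,\bB,\bM$ by $\hA,\hB,\bI$; both orderings are valid and the bookkeeping with the multinomial coefficients checks out.
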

\begin{proof}
Let 
\[
p(x_0 + \epsilon, y_0 + \delta) = \sum_{i=0}^d \sum_{j=0}^{d-i} \frac{\epsilon^i}{i!} \frac{\delta^j}{j!} c_{i,j}
\]
be the Taylor expansion of a polynomial $p$ of degree $d$.
Our goal is to show that $c_{i,j} = \myD{ \hA[i], \hB[j] } p(x_0, y_0)$.
By Example~\ref{ex:diagonal}, we can write
\[
p(x_0 + \epsilon, y_0 + \delta) = \mydet{(x_0 + \epsilon)\bA + (y_0 + \delta)\bB + \bC} = \frac{1}{d!}\D{ \left(\bM + \epsilon \bA + \delta \bB \right) [d]}
\]
and using multilinearity, we have
\[
p(x_0 + \epsilon, y_0 + \delta) = \frac{1}{d!} \sum_{i = 0}^d \sum_{j=0}^{d-i} \epsilon^i \delta^j \binom{d}{i, j, d-i-j}\D{\bA[i], \bB[j], \bM[d-i-j]},
\]
where $\binom{d}{i,j, d-i-j}$ is the multinomial coefficient $\frac{d!}{i!j!(d-i-j)!}$.
Equating coefficients then gives
\begin{align*}
c_{i,j} 
&= \frac{i! j!}{d!} \binom{d}{i, j, d-i-j} \D{\bA[i], \bB[j], \bM[d-i-j]} 
\\&= \frac{1}{(d - i - j)!}\D{\bA[i], \bB[j], \bM[d-i-j]}.
\end{align*}
By Corollary~\ref{shift}, we can factor out the $\bM$ term to get
\[
c_{i,j} 
= 
\frac{1}{(d - i - j)!}\mydet{\bM}\D{\hA[i], \hB[j], \bI[d-i-j]}
= 
\mydet{M} \myD{\hA[i], \hB[j]}
\]
which, since $\mydet{\bM} = p(x_0, y_0)$, is exactly the claimed result.
\end{proof}

Corollary~\ref{cor:derivatives} provides a way to associate the partial derivatives of a bivariate real stable polynomial at a point to a mixed discriminant involving the matrices in its determinantal representation.
This, coupled with Corollary~\ref{cor:PSD}, will allow us to use properties of positive semidefinite matrices to derive inequalities for points above the roots of $p$.
With this in mind, we will attempt to quantify the concept of being ``above the roots''.
For a polynomial $q$, we will consider the barrier function
\[
\Phi_q^x = 
\begin{cases}
\frac{\partial}{\partial x} \ln q(x_0, y_0) & \text{when $(x_0, y_0)$ is above the roots of $q$} \\
\infty & \text{otherwise}
\end{cases}
\]
and $\Phi_q^y$ defined similarly (except with the derivative in the $y$ coordinate).
In particular, we will be interested in the behavior of these functions under transformations of $q$.
The next lemma is a general result in that direction:

\begin{lemma}
\label{ifandonlyif}
Let $R = \sum_i a_i \partial_x^i$ be a differential operator with real coefficients $\{ a_i \}$ such that $q = R(p)$ and such that $(x_0, y_0)$ is above the roots of both $p$ and $q$.
Then the following two statements are equivalent:
\begin{enumerate}
\item $\Phi_{q}^y \leq \Phi_p^y$
\item $\sum_i i a_i  \myD{\hA[i-1], \hL} \geq 0$, where $\hL = \hA^{1/2}\hB\hA^{1/2}$ is positive semidefinite.
\end{enumerate}
\end{lemma}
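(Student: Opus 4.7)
The plan is to convert the barrier inequality $\Phi_q^y \le \Phi_p^y$ into a mixed discriminant identity via the derivative formula of Corollary \ref{cor:derivatives}, and then reduce that identity to condition (2) using Corollaries \ref{expand} and \ref{shift}. Since $(x_0,y_0)$ lies above the roots of both $p$ and $q$, we have $p(x_0,y_0)>0$ and $q(x_0,y_0)>0$, so clearing denominators shows that (1) is equivalent to
\[
\partial_y p(x_0,y_0)\,q(x_0,y_0) \;-\; \partial_y q(x_0,y_0)\,p(x_0,y_0) \;\ge\; 0.
\]

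I would begin by applying Corollary \ref{cor:derivatives} to obtain $\partial_x^i p(x_0,y_0) = \myD{\hA[i]}\,p(x_0,y_0)$ and $\partial_x^i\partial_y p(x_0,y_0) = \myD{\hA[i],\hB}\,p(x_0,y_0)$. Since $R=\sum_i a_i\partial_x^i$ involves only $\partial_x$, it commutes with $\partial_y$, so
\[
q(x_0,y_0) = p(x_0,y_0)\sum_i a_i\myD{\hA[i]},\qquad \partial_y q(x_0,y_0) = p(x_0,y_0)\sum_i a_i\myD{\hA[i],\hB},
\]
while $\partial_y p(x_0,y_0) = p(x_0,y_0)\myD{\hB}$. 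Cancelling the positive factor $p(x_0,y_0)^2$ reduces (1) to
\[
\sum_i a_i\bigl(\myD{\hB}\myD{\hA[i]} - \myD{\hA[i],\hB}\bigr) \;\ge\; 0.
\]

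Next I would apply Corollary \ref{expand} with $\bA=\hB$, $\bX=\hA$, and $k=i$ to conclude (using permutation invariance) that each bracketed term equals $i\,\myD{\hB\hA,\hA[i-1]}$. The final task is to identify $\myD{\hB\hA,\hA[i-1]}$ with $\myD{\hA[i-1],\hL}$, which I would do in two moves: first, from Lemma \ref{lem:def2} together with $\det(M)=\det(M^T)$ the mixed discriminant is invariant under simultaneous transposition of its arguments, so using that $\hA,\hB$ are symmetric and $(\hB\hA)^T=\hA\hB$ we get $\myD{\hB\hA,\hA[i-1]}=\myD{\hA\hB,\hA[i-1]}$; second, Corollary \ref{shift} applied with $\bA=\hA$, $\bX_1=\hB$, and $\bX_2=\cdots=\bX_i=\bI$ yields $\myD{\hA\hB,\hA[i-1]}=\myD{\hL,\hA[i-1]}$.

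Chaining these identities gives $\sum_i a_i\bigl(\myD{\hB}\myD{\hA[i]}-\myD{\hA[i],\hB}\bigr)=\sum_i i\,a_i\,\myD{\hA[i-1],\hL}$, which establishes the equivalence. Positive semidefiniteness of $\hL$ is automatic since it is the conjugation of the positive semidefinite $\hB$ by $\hA^{1/2}$. The only genuinely nontrivial maneuver is the cyclic rearrangement $\myD{\hB\hA,\hA[i-1]}=\myD{\hA\hB,\hA[i-1]}$; the remainder is bookkeeping with the tools of Section \ref{sec:md}.
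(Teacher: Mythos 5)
Your proposal is correct and follows essentially the same route as the paper: convert (1) to the sign of $p\,q_y - q\,p_y$ via Corollary \ref{cor:derivatives}, then collapse each term with Corollary \ref{expand} and Corollary \ref{shift}. You are in fact slightly more careful than the paper in justifying the passage from $\myD{\hB\hA,\hA[i-1]}$ to $\myD{\hA\hB,\hA[i-1]}$ via simultaneous transposition, a step the paper's proof passes over silently.
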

\begin{proof}
Using Corollary~\ref{cor:derivatives}, we can write
\[
\frac{q}{p} = \sum_i a_i \myD{\hA[i]}
\]
so that 
\[
\frac{q_y}{p} = \sum_i a_i \myD{\hA[i], \hB}
\AND
\frac{q p_y}{p^2} = \sum_i a_i \myD{\hA[i]}\myD{\hB}.
\]
Now using Corollary~\ref{expand}, we have
\[
\myD{\hA[i]}\D{\hB} = \myD{\hA[i], \hB} + i \myD{\hA[i-1], \hA \hB}.
\]
By Corollary~\ref{cor:PSD}, both $\hA$ and $\hB$ are positive semidefinite, and so we can apply Corollary~\ref{shift} to get 
\[
\myD{\hA[i], \hA \hB} = \myD{\hA[i], \hL}
\]
where $\hL = \hA^{1/2}\hB\hA^{1/2}$ is positive semidefinite.
Combining these gives
\[
\frac{p q_y - q p_y}{p^2} = \sum_i a_i \left( \myD{\hA[i], \hB} - \myD{\hA[i]}\myD{\hB} \right) = - \sum_i i a_i  \myD{\hA[i-1], \hL}.
\]
Therefore we can write 
\[
\Phi_{q}^y - \Phi_p^y 
=
\frac{p q_y - q p_y}{pq} 
= 
\left(\frac{p}{q} \right) \frac{p q_y - q p_y}{p^2}
= 
-\left(\frac{p}{q} \right) \sum_i i a_i  \myD{\hA[i-1], \hL}
\]
where $p/q$ is positive when $(x_0, y_0)$ is above the roots of both $p$ and $q$.
Hence
\[
\Phi_{q}^y - \Phi_p^y 
\AND
- \sum_i i a_i  \myD{\hA[i-1], \hL}
\]
have the same sign, as required.
\end{proof}

Using this machinery, we now prove two lemmas that will help us exploit the quadratic nature of the polynomials.
The first of these lemmas is a strengthening of \cite[Lemma~5.11]{MSS} in the case that the polynomial is quadratic.

\begin{lemma}\label{lem:phi}
Assume $p(x, y)$ is quadratic in $x$ and let 
\[
\Phi_p^x \leq \left(1 - \frac{1}{\delta} \right) \frac{1}{2 - \delta}
\]
for some $\delta \in (1, 2)$.
Now let $q(x, y) = (1 - \partial_x) p(x + \delta, y)$ and assume that $(x_0, y_0)$ is above the roots of both $p$ and $q$.
Then
\[
\Phi_{q}^y \leq \Phi_{p}^y.
\]
\end{lemma}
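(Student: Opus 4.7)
The plan is to apply Lemma~\ref{ifandonlyif}, which requires expressing $q$ as a differential operator $R$ in $\partial_x$ applied to $p$. First I would handle the shift: since $p$ has degree at most $2$ in $x$, a finite Taylor expansion gives
\[
p(x+\delta,y) = \left(1 + \delta \partial_x + \frac{\delta^2}{2}\partial_x^2\right) p(x,y).
\]
Composing with $(1-\partial_x)$ and using $\partial_x^3 p = 0$ yields
\[
q(x,y) = R(p)(x,y), \qquad R = 1 + (\delta-1)\partial_x + \frac{\delta(\delta-2)}{2}\partial_x^2,
\]
so in the notation of Lemma~\ref{ifandonlyif}, $a_0 = 1$, $a_1 = \delta-1$, and $a_2 = \tfrac{\delta(\delta-2)}{2}$.

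Next I would invoke Lemma~\ref{ifandonlyif} to reduce the desired inequality $\Phi_q^y \le \Phi_p^y$ to showing
\[
(\delta-1)\myD{\hL} + \delta(\delta-2)\myD{\hA,\hL} \geq 0,
\]
equivalently, since $\delta\in(1,2)$,
\[
\delta(2-\delta)\myD{\hA,\hL} \leq (\delta-1)\myD{\hL}.
\]
The key observation is then that $\hA$ and $\hL$ are both positive semidefinite (Corollary~\ref{cor:PSD} and the definition of $\hL$ in Lemma~\ref{ifandonlyif}), so Lemma~\ref{lem:submult} applies to give the submultiplicativity estimate
\[
\myD{\hA,\hL} \leq \myD{\hA}\,\myD{\hL}.
\]

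The final step links the hypothesis on the barrier function to the factor $\myD{\hA}$. By Corollary~\ref{cor:derivatives}, $\myD{\hA} = p_x(x_0,y_0)/p(x_0,y_0) = \Phi_p^x$. The assumed bound
\[
\Phi_p^x \leq \left(1-\frac{1}{\delta}\right)\frac{1}{2-\delta} = \frac{\delta-1}{\delta(2-\delta)}
\]
is precisely calibrated so that, when multiplied through, it gives
\[
\delta(2-\delta)\myD{\hA,\hL} \leq \delta(2-\delta)\,\Phi_p^x\,\myD{\hL} \leq (\delta-1)\myD{\hL},
\]
which is the required inequality. The only minor edge case is when $\myD{\hL}=0$, in which case submultiplicativity forces $\myD{\hA,\hL}=0$ as well (using positivity) and the conclusion is immediate. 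No step looks genuinely difficult once the expansion is in place — the main point to verify carefully is that the coefficient $\tfrac{\delta(\delta-2)}{2}$ produced by the $(1-\partial_x)$ precomposition, together with the factor of $2$ arising from $i=2$ in Lemma~\ref{ifandonlyif}, combines cleanly with the hypothesis on $\Phi_p^x$.
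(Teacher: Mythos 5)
Your proof is correct and follows essentially the same route as the paper's: the same Taylor-expansion decomposition $q = p + (\delta-1)p_x + \tfrac{\delta(\delta-2)}{2}p_{xx}$, the same reduction via Lemma~\ref{ifandonlyif}, and the same combination of Lemma~\ref{lem:submult} with $\myD{\hA} = \Phi_p^x$ from Corollary~\ref{cor:derivatives}, including the sign analysis for $\delta\in(1,2)$ and the $\myD{\hL}=0$ edge case.
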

\begin{proof}
We first write $q$ as $R(p)$ for a differential operator $R$.
The shift by $\delta$ can be translated into a differential operator using Taylor's formula:
\begin{equation}\label{taylor}
f(x + t) = \sum_i f^{(i)}(x) \frac{t^i}{i!} = \sum_i \frac{t^i}{i!} \partial_x^i f.
\end{equation}
Hence we can write
\begin{align*}
q(x, y) &=
p(x + \delta, y) - p_x(x + \delta, y) \\
&= \left(1 + \delta \partial_x + \frac{\delta^2}{2} \partial_x^2\right) p(x, y) - \left(\partial_x + \delta \partial_x^2 \right) p(x, y) \\ 
&= \left(1 + (\delta - 1) \partial_x + \left(\frac{\delta^2}{2} - \delta \right) \partial_x^2 \right) p
\end{align*}
where all higher level derivatives can be discarded since $p$ is quadratic.
Hence we can write
\[
q(x, y) = a_0 p + a_1 p_x + a_2 p_{xx}
\]
where $a_0 = 1$ and 
\begin{equation}\label{eq:a1a2}
a_1 =  \delta - 1
\AND
a_2 = \frac{\delta^2}{2} - \delta.
\end{equation}
Then Lemma~\ref{ifandonlyif} implies
\[
\Phi_{q}^y \leq \Phi_p^y
\]
if and only if 
\begin{equation}
\label{eq:blah}
a_1 \myD{\hL} + 2 a_2 \myD{\hA, \hL} \geq 0.
\end{equation}
Since $\hL$ is positive semidefinite, $\myD{\hL} = 0$ if and only if $\hL$ is the $0$ matrix.
In this case, we would also have $\myD{\hA, \hL} = 0$ and so (\ref{eq:blah}) would hold trivially (thus finishing the proof).
On the other hand, if $\myD{\hL} \neq 0$ then (\ref{eq:blah}) holds if and only if 
\[
a_1 \geq -2 a_2 \frac{\myD{\hA, \hL}}{\myD{\hL}} \geq 0.
\]
By Corollary~\ref{cor:PSD}, $\hA$ and $\hL$ are positive semidefinite, so by Lemma~\ref{lem:submult} we have the inequality 
\[
\myD{\hA, \hL} \leq \myD{\hA} \myD{\hL} =  \myD{\hL} \Phi_p^x.
\]
since $\myD{A} = \Phi_p^x$ by Corollary~\ref{cor:derivatives}.
Now since $\delta \in (1, 2)$, we have $a_1 > 0$ and $a_2 < 0$, and so it suffices to show
\[
a_1  \geq -2 a_2 \Phi_p^x.
\]
Plugging back in the values for $a_1$ and $a_2$ from (\ref{eq:a1a2}) gives
\[
\delta - 1 \geq \delta (2-\delta)\Phi_p^x 
\]
which is precisely our initial hypothesis.
\end{proof}

In Section~\ref{sec:actualproof}, we will use Lemma~\ref{lem:phi} to understand how the transformation from $q$ to $p$ changes the value of the barrier function $\Phi$.
To do so, however, we will need to ensure that the point $(x_0, y_0)$ is above the roots of the resulting $q$ (something that is not true in general).
In \cite{MSS}, this was addressed (for an appropriately chosen point $(x_0, y_0)$) using \cite[Lemma~5.10]{MSS}.
Again we will need a strengthened version that takes advantage of the quadratic nature of our polynomials.

\begin{lemma}\label{lem:decreasing} 
Let $s$ be a quadratic, univariate polynomial with positive first coefficient and real roots $a \leq b$.
Then
\[
f(x) = x - \frac{2s(x)}{s'(x)}
\]
is a nonincreasing function for any $x > b$.
\end{lemma}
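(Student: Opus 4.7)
The plan is to reduce the monotonicity of $f$ to a single pointwise inequality in $s, s', s''$, and then verify that inequality by writing $s$ in factored form and applying AM--GM.

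First, I would differentiate $f$ directly. Using the quotient rule one finds
\[
f'(x) = 1 - \frac{2(s'(x))^{2} - 2 s(x) s''(x)}{(s'(x))^{2}} = -1 + \frac{2 s(x) s''(x)}{(s'(x))^{2}},
\]
which is valid whenever $s'(x) \neq 0$. Since the roots of $s'$ lie in the interval $[a,b]$ (the derivative of a quadratic with roots $a \leq b$ vanishes at $(a+b)/2$), the denominator is nonzero on $(b,\infty)$. Thus showing $f'(x) \leq 0$ on $(b,\infty)$ reduces to verifying the inequality
\[
2 s(x) s''(x) \leq (s'(x))^{2} \qquad \text{for } x > b.
\]

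Second, I would exploit that $s$ is quadratic with positive leading coefficient and real roots $a \leq b$: write $s(x) = c(x-a)(x-b)$ with $c > 0$, so that $s'(x) = c(2x - a - b)$ and $s''(x) = 2c$. Substituting into the target inequality, the factors of $c^{2}$ cancel and we are left with
\[
4(x-a)(x-b) \leq (2x - a - b)^{2}.
\]
Setting $u = x-a$ and $v = x-b$ (both positive when $x > b$) this becomes $4uv \leq (u+v)^{2}$, which is just $(u-v)^{2} \geq 0$, i.e., AM--GM. This is in fact valid for all real $x$, not only $x > b$, so the only role of the hypothesis $x > b$ is to guarantee $s'(x) > 0$ so that $f$ is defined and differentiable.

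The main ``obstacle'' is really just bookkeeping: there is no analytic difficulty, only the need to express everything in a form where the quadratic structure is visible. One minor point worth mentioning in the write-up is the edge case $a = b$, in which $s(x) = c(x-a)^{2}$ and $f(x) = x - (x-a) = a$ is constant on $(a,\infty)$, so the conclusion holds trivially; in all other cases $(u - v)^{2} = (b-a)^{2} > 0$, giving strict monotonicity.
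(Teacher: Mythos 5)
Your proof is correct and follows essentially the same route as the paper: compute $f'(x) = 2s(x)s''(x)/s'(x)^2 - 1$ and reduce to the pointwise inequality $2s(x)s''(x) \le s'(x)^2$. The only difference is cosmetic — the paper verifies that inequality via the discriminant of the Taylor shift $s(x+y)$ in $y$, while you verify it by factoring $s$ and invoking $(u-v)^2 \ge 0$; your explicit remarks that $s' \neq 0$ on $(b,\infty)$ and on the double-root case are fine additions.
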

\begin{proof}
We start by writing
\[
f'(x) = 1 - 2\frac{s'(x)^2 - s''(x)s(x)}{s'(x)^2} = 2\frac{s''(x)s(x)}{s'(x)^2} - 1.
\]
By Taylor's formula, we have
\[
s(x + y) = s(x) + y s'(x) + \frac{y^2}{2} s''(x)
\]
which is real-rooted (as a polynomial in $y$) and therefore by the quadratic formula we must have
\[
s'(x)^2 \geq 2 s(x) s''(x)
\]
with equality if and only if $s$ has a double root.
Since $s$ has positive first coefficient, both $s(x)$ and $s''(x)$ are nonnegative for $x \geq b$ --- we therefore have
\[
f'(x) \leq 0
\]
with equality if and only if $s$ has a double root.
\end{proof}


\section{Proof of Theorem~\ref{thm:new}}\label{sec:proof}

The purpose of this section is to prove Theorem~\ref{thm:mixed} using the tools from Sections \ref{sec:md} and \ref{sec:apply}.
Theorem \ref{thm:new} can then be deduced from Theorem~\ref{thm:mixed} by the same argument as \cite[Theorem 1.4]{MSS} (which we briefly review here).

Given random vectors $v_1, \dots, v_m \in \C^d$, one can define the (random) matrix $V = \sum_i v_i v_i^*$ and its (random) characteristic polynomial
\[
 p_V(x) = \mydet{x \mathbf I - V}.
\]
In the case that the random vectors $\{ v_i \}$ are independent, the authors of \cite{MSS} constructed a so-called {\em interlacing family} from the polynomials in the support of $p_V$.
As mentioned in Section~\ref{review}, any such construction provides a polynomial $p^*$ with the following properties:
\begin{enumerate}
\item $p^*$ has all real roots,
\item $\P{}{\mathrm{maxroot}(p_{V}) < \mathrm{maxroot}(p^*)} > 0$.
\end{enumerate}
Since each $p_V$ is the characteristic polynomial of a positive semidefinite matrix, the largest root of $p_V$ is the operator norm of $V$.
Hence a conclusion like the one in Theorem~\ref{thm:new} could be obtained by finding an appropriate bound on $\mathrm{maxroot}(p^*)$.

In the case of the interlacing family constructed in \cite{MSS}, the associated polynomial $p^*$ is the expected characteristic polynomial
\begin{equation}\label{eq:expected_poly}
 p^*(x) = \E{}{ p_V(x)} = \E{}{\mydet{x\mathbf I - \sum_i v_i v_i^*}}.
\end{equation}
As the first step in the process of bounding the largest root of $p^*$, Marcus, Spielman, and Srivastava \cite[Theorem 4.1]{MSS} showed that (\ref{eq:expected_poly}) could be written in a form they call a {\em mixed characteristic polynomial}:
\[
 p^*(x) = \E{}{\mydet{x \mathbf I - \sum_i v_i v_i^*}} 
= \left(\prod_{i=1}^{m} 1 - \partial_{y_i} \right) 
\mydet{\sum_{i=1}^{m} y_{i} A_{i}}
\bigg|_{y_{1} = \dots = y_{m} = x}
\]
where $A_i = \E{}{v_i v_i^*}$ for each $i$.
By translating the restrictions on the $v_i$ in the hypothesis of Theorem~\ref{thm:new} to restrictions on the associated $A_i$, one can see that Theorem~\ref{thm:mixed} is precisely the bound on the largest root of $p^*$ necessary for Theorem~\ref{thm:new}.

Once Theorem~\ref{thm:new} is established, the deduction of Corollary \ref{KS2} then follows the same proof as \cite[Corollary~1.5]{MSS}. 
For the benefit of the reader, we reproduce it here:
\begin{proof}[Proof of Corollary~\ref{KS2}]
For each $i\in [m]$ and $k=1,2$, we define vectors $w_{i,k} \in \C^{2d}$ by
\[
w_{i,1}= \sqrt{2/\eta} \begin{bmatrix} u_i \\
0^d \end{bmatrix},
\
w_{i,2}= \sqrt{2/\eta} \begin{bmatrix} 0^d \\
u_i \end{bmatrix}.
\]
Let $v_1,\ldots,v_m$ be independent random vectors such that $\P{}{v_i=  w_{i,k}}=1/2$, $k=1,2$. 
A simple calculation using \eqref{KS3} shows that these vectors satisfy \eqref{new1} with $\epsilon=2/\eta<1/2$. 
By Theorem \ref{thm:new}, there exists an assignment of each $v_i$ so that
\[
\left\| \sum_{i=1}^m v_i v_i^{*} \right\| =
\left\| \sum_{k=1}^2 \sum_{i\in S_k} w_{i,k} w_{i,k}^{*} \right\|
\leq 1 + 2\sqrt{\epsilon}\sqrt{1 - \epsilon},
\]
where $I_k=\{i: v_i=w_{i,k} \}$. Hence, for $k=1,2$, we have
\[
\left\| \sum_{i\in S_k} u_i u_i^{*} \right\| =\frac{\eta}{2} 
\left\| \sum_{i\in S_k} w_{i,k} w_{i,k}^{*} \right\|
\leq \frac{\eta}{2} \bigl(1 + 2\sqrt{\epsilon(1 - \epsilon)}\bigr)=\frac{\eta}{2}+\sqrt{2(\eta-2)}.
\]
This shows \eqref{KS4} with $\theta=\eta/2-\sqrt{2(\eta-2)}>0$ (when $\eta > 4$).
\end{proof}

\subsection{Proof of Theorem~\ref{thm:mixed}}\label{sec:actualproof}

We first recall the formal definition of the {\em barrier function} from \cite{MSS}:

\begin{definition}
\label{def:barrier}
For a polynomial $Q \in \C[x_1, \dots, x_m]$ and a point $\vec{y} \in \R^m$, we define the {\em barrier function of $Q$ at $\vec{y}$} to be the function 
\[
\Phi^i_Q(\vec{y}) = 
\begin{cases}
\frac{\partial}{\partial x_i} \ln Q(\vec{y}) & \text{when $\vec{y}$ is above the roots of $Q$} \\
\infty & \text{otherwise}
\end{cases}
\]
\end{definition}
This is an extension of the function $\Phi$ introduced in the previous section to allow for different reference points and more variables (the coordinates $x, y$ have been replaced by variables $x_i$, and only the subscript $i$ is used so as to reduce the clutter).

Let $\epsilon < 1/2$ and set
\[
\delta = 1 + \sqrt{\frac{\epsilon}{1 - \epsilon}}
\AND
t = (1 - 2 \epsilon) \sqrt{\frac{\epsilon}{1 - \epsilon}}.
\]
We start with a polynomial $Q_0$ and reference point $w_0$ defined as
\[
Q_0 = \mydet{\sum_i x_i A_i}
\AND
w_0 = t \1
\]
where $\1$ is the vector with $d$ $1$'s.
It is easy to check that $w_0$ is above the roots of $Q_0$ and that $\Phi_{Q_0}^j(w_0) \leq \frac{\epsilon}{t}$ for all $j$ as in the proof of \cite[Theorem 5.1]{MSS}.

Given $Q_i$ and $w_i$, we will construct polynomial $Q_{i+1}$ and reference point $w_{i+1}$ as
\[
Q_{i+1} = (1 - \partial_{i+1}) Q_i 
\AND 
w_{i+1} = w_i + \delta e_{i+1}.
\]
Our goal is to show that $w_{i+1}$ is above the roots of $Q_{i+1}$.
To do so, we will need to understand the effect of applying the $(1 - \partial_{i+1})$ operator on the polynomial $Q_i$, for which we can use the barrier function $\Phi_{Q_{i}}^{i+1}(w_i)$:

\begin{lemma}\label{lem:b2}
If $w_i$ is above the roots of $Q_i$ and 
\[
\Phi_{Q_i}^{i+1}(w_i) \leq \frac{\epsilon}{t}
\]
then 
$w_{i+1}$ is above the roots of $Q_{i+1}$.
\end{lemma}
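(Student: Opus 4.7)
The plan is to follow the strategy of \cite[Lemma~5.10]{MSS}, but to replace the univariate barrier argument used there with the sharper Lemma~\ref{lem:decreasing} that exploits the quadratic character of our polynomials. First, I will reduce the multivariate statement to a one-variable computation. Fix an arbitrary $\vec{r}\ge 0$ and split $\vec{r}=\vec{r}'+r_{i+1}e_{i+1}$ with $r'_{i+1}=0$. Define the univariate restriction
\[
s(y)\;:=\;Q_i\bigl(w_i+\vec{r}'+y\,e_{i+1}\bigr).
\]
Since $Q_{i+1}=(1-\partial_{i+1})Q_i$, we have
\[
Q_{i+1}(w_{i+1}+\vec{r})\;=\;s(\delta+r_{i+1})-s'(\delta+r_{i+1}),
\]
so the conclusion reduces to verifying $\phi(y):=s'(y)/s(y)<1$ at $y=\delta+r_{i+1}$.

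Next I will collect the structural properties of $s$. Because each $A_j$ has rank at most $2$, the expansion of $Q_0=\det(\sum_j x_jA_j)$ as a sum of mixed discriminants has degree at most $2$ in each variable, and the operators $(1-\partial_j)$ with $j\le i$ do not raise the degree in $x_{i+1}$. Hence $s$ has degree $\le 2$, is real-rooted (by real stability of $Q_i$), has nonnegative leading coefficient, and is strictly positive on $[0,\infty)$ since $w_i+\vec{r}'$ lies above the roots of $Q_i$. In particular its largest root $b$ is strictly negative.

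Now I will apply Lemma~\ref{lem:decreasing}: the function $y-2/\phi(y)$ is non-increasing on $(b,\infty)$. Comparing $y=0$ and $y=\delta+r_{i+1}$ yields
\[
\phi(\delta+r_{i+1})\;\le\;\frac{2}{\delta+r_{i+1}+2/\phi(0)}\;\le\;\frac{2}{\delta+2/\phi(0)}.
\]
The standard monotonicity of the barrier function along positive shifts in coordinates other than $i+1$ (see \cite[Lemma~5.8]{MSS}) gives
\[
\phi(0)\;=\;\Phi_{Q_i}^{i+1}\bigl(w_i+\vec{r}'\bigr)\;\le\;\Phi_{Q_i}^{i+1}(w_i)\;\le\;\epsilon/t,
\]
so it suffices to prove $2/(\delta+2t/\epsilon)<1$, i.e.\ $\delta+2t/\epsilon>2$. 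The degenerate cases in which $s$ is linear or constant are handled trivially: if $s$ is linear with positive leading coefficient then $\phi(y)\le 1/\delta<1$ directly, while $s$ constant forces $\phi\equiv 0$.

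Finally, substituting $\delta=1+\sqrt{\epsilon/(1-\epsilon)}$ and $t=(1-2\epsilon)\sqrt{\epsilon/(1-\epsilon)}$ gives
\[
\delta+\tfrac{2t}{\epsilon}\;=\;1+\frac{2-3\epsilon}{\sqrt{\epsilon(1-\epsilon)}},
\]
so the desired inequality becomes $2-3\epsilon>\sqrt{\epsilon(1-\epsilon)}$, which upon squaring factors as $(2\epsilon-1)(5\epsilon-4)>0$ and is therefore valid precisely for $\epsilon<1/2$. The main obstacle is the numerical tightness of this last step: the specific values of $\delta$ and $t$ are calibrated so that exactly the quadratic improvement supplied by Lemma~\ref{lem:decreasing} (over the weaker bound in \cite[Lemma~5.9]{MSS}) is enough to push the barrier strictly below $1$, and the hypothesis $\epsilon<1/2$ is precisely the range in which this margin survives.
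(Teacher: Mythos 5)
Your proof is correct and follows essentially the same route as the paper's: restrict to the $(i+1)$st coordinate, invoke the monotonicity of the barrier function from \cite[Lemma~5.8]{MSS} together with Lemma~\ref{lem:decreasing}, and reduce everything to the single numerical inequality $\delta + 2t/\epsilon > 2$, which is exactly the paper's condition $\epsilon/t < 2/(2-\delta)$ in disguise. The only differences are that you spell out the reduction for an arbitrary point $\vec{r}\ge 0$ of the orthant, treat the degenerate linear/constant cases of $s$ explicitly, and carry out the final algebra that the paper leaves as ``easy to check''; all of these checks are accurate.
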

\begin{proof}
Let $s(x)$ be the univariate polynomial that comes from holding all variables of $Q_i$ other than the $(i+1)$st variable constant, i.e., $s(x)=Q_i(t+\delta,\ldots,t+\delta,x,t,\ldots,t)$.
By the monotonicity of barrier functions \cite[Lemma 5.8]{MSS}, it suffices to show that 
\begin{equation}\label{eqn:plus_delta}
s(t + \delta) - s'(t + \delta) > 0
\end{equation}
given that 
\[
\Phi_s(t) := \frac{s'(t)}{s(t)} \leq \frac{\epsilon}{t}.
\]
Note that equation~(\ref{eqn:plus_delta}) is equivalent to $\Phi_s(t + \delta) < 1$.
By Lemma~\ref{lem:decreasing}, we have
\[
t - \frac{2}{\Phi_s(t)} \geq t +\delta - \frac{2}{\Phi_s(t+\delta)}
\]
and so 
\[
\frac{1}{\Phi_s(t + \delta)} \geq \frac{\delta}{2} + \frac{1}{\Phi_s(t)}.
\]
Thus it suffices to show 
\[
\frac{\delta}{2} + \frac{1}{\Phi_s(t)} > 1
\]
which is equivalent to showing
\[
\Phi_s(t) < \frac{2}{2 - \delta}.
\]
Plugging in the hypothesis, it suffices to show 
\[
\frac{\epsilon}{t} < \frac{2}{2 - \delta}
\]
which reduces to showing
\[
\frac{ \sqrt{\epsilon}\sqrt{1 - \epsilon}}{1 - 2 \epsilon} < \frac{2}{1 - \sqrt{\frac{\epsilon}{1 - \epsilon}}}
\]
when the given values of $\delta$ and $t$ are inserted.
It is then easy to check that this holds for any $\epsilon < 1/2$.
\end{proof}

In order to use Lemma~\ref{lem:b2}, we will need to bound the value of $\Phi_{Q_i}^{i+1}(w_i)$.
We will do this by showing that the transformation from $(Q_i, w_i)$ to $(Q_{i+1}, w_{i+1})$ causes the barrier functions to shrink in all coordinates $j > i+1$.
Note that when moving from $(Q_i, w_i)$ to $(Q_{i+1}, w_{i+1})$, we are altering only the $x_{i+1}$ variable.
Hence to see what happens to the barrier function in coordinate $j > i+1$, we can restrict to those two coordinates (since the restriction of a real stable polynomial is a real stable polynomial) and appeal to Lemma~\ref{lem:phi}.
\begin{lemma}\label{lem:b1}
Let $j$ be a coordinate such that $i+1 < j \leq m$ and
\[
\Phi_{Q_i}^{j}(w_i) \leq \frac{\epsilon}{t}.
\]
Then
\[of
\Phi_{Q_{i+1}}^{j}(w_{i+1}) \leq \frac{\epsilon}{t}.
\]
\end{lemma}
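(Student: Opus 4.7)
The plan is to restrict $Q_i$ to the two coordinates $x_{i+1}$ and $x_j$ and reduce to the bivariate situation of Lemma~\ref{lem:phi}. Concretely, fix every variable of $Q_i$ except $x_{i+1}$ and $x_j$ at its value in $w_i$, and let $p(x,y)$ denote the resulting bivariate polynomial (with $x$ playing the role of $x_{i+1}$ and $y$ the role of $x_j$). Then $q(x,y) := (1-\partial_x)p(x+\delta,y)$ is exactly the restriction of $Q_{i+1}$ to these two coordinates with reference point shifted. Writing $(x_0,y_0) = ((w_i)_{i+1},(w_i)_j)$, one therefore has $\Phi_p^y(x_0,y_0) = \Phi_{Q_i}^j(w_i)$ and $\Phi_q^y(x_0,y_0) = \Phi_{Q_{i+1}}^j(w_{i+1})$, so the desired inequality is precisely the conclusion $\Phi_q^y \leq \Phi_p^y$ delivered by Lemma~\ref{lem:phi}.

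Next I would verify the hypotheses of Lemma~\ref{lem:phi}. Because each $A_k$ has rank at most $2$, the base polynomial $Q_0 = \det(\sum_k x_k A_k)$ has degree at most $2$ in every individual variable, and the operators $(1-\partial_k)$ with $k \neq i+1$ leave the degree in $x_{i+1}$ untouched, so $p$ is quadratic in $x$. Real stability of $Q_i$ descends to $p$ by the standard fact that specialization of a real stable polynomial at real values is again real stable, and $(x_0,y_0)$ is above the roots of $p$ because $w_i$ is above the roots of $Q_i$. The quantitative hypothesis $\Phi_p^x(x_0,y_0) \leq (1-1/\delta)/(2-\delta)$ comes from the inductive bound $\Phi_{Q_i}^{i+1}(w_i) \leq \epsilon/t$, together with the algebraic identity
\[
\frac{\epsilon}{t} = \frac{1-1/\delta}{2-\delta}.
\]
Setting $a = \sqrt{\epsilon/(1-\epsilon)}$ so that $\delta = 1+a$ and $t = (1-2\epsilon)a$, both sides collapse to $\sqrt{\epsilon(1-\epsilon)}/(1-2\epsilon)$, so the hypothesis is met with equality. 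Note also that $\delta \in (1,2)$ since $\epsilon < 1/2$ forces $a < 1$. The remaining input to Lemma~\ref{lem:phi} --- that $(x_0+\delta,y_0)$ is above the roots of $q$ --- is exactly the content of Lemma~\ref{lem:b2}, which is applicable because of the same bound at coordinate $i+1$. Combining everything yields $\Phi_{Q_{i+1}}^j(w_{i+1}) \leq \Phi_{Q_i}^j(w_i) \leq \epsilon/t$, as required.

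The only delicate point --- and the main obstacle --- is bookkeeping: the lemma as stated hypothesizes a bound only at coordinate $j$, but the proof genuinely needs a bound at coordinate $i+1$ as well, in order to feed Lemma~\ref{lem:phi}. This is not a real gap, because the outer induction driving the proof of Theorem~\ref{thm:mixed} maintains the stronger invariant $\Phi_{Q_i}^k(w_i) \leq \epsilon/t$ for \emph{every} $k > i$ (Lemmas~\ref{lem:b2} and~\ref{lem:b1} together preserving this invariant from step to step), but one must be careful to invoke the invariant rather than treat Lemma~\ref{lem:b1} in isolation. The fact that the identity $\epsilon/t = (1-1/\delta)/(2-\delta)$ holds with equality also deserves emphasis, as it shows that the chosen values of $\delta$ and $t$ are tuned precisely to make both bookkeeping lemmas apply simultaneously, reflecting the tightness of the improvement over \cite{MSS} when the rank is restricted to $2$.
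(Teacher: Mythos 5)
Your proof is correct and follows essentially the same route as the paper: restrict $Q_i$ to coordinates $i+1$ and $j$, invoke Lemma~\ref{lem:b2} for the above-the-roots hypothesis of Lemma~\ref{lem:phi}, and close the argument with the identity $\epsilon/t = (1-1/\delta)/(2-\delta)$. Your bookkeeping remark is well taken and in fact slightly more careful than the paper itself: the hypothesis of Lemma~\ref{lem:phi} concerns the barrier function in the shifted coordinate $i+1$ (the paper's proof writes $\Phi_{Q_i}^{j}$ where $\Phi_{Q_i}^{i+1}$ is what is really needed), so the lemma must be read together with the outer induction's invariant $\Phi_{Q_i}^{k}(w_i)\le\epsilon/t$ for all $k>i$, exactly as you note.
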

\begin{proof}
Lemma~\ref{lem:b2} ensures that $w_{i+1}$ is above the roots of $Q_{i+1}$ and so it is sufficient (by Lemma~\ref{lem:phi}) to show 
\[
\Phi_{Q_i}^{j}(w_i) \leq \left(1 - \frac{1}{\delta}\right)\frac{1}{2 - \delta}.
\]
Using the hypothesis, this would be implied by showing
\[
\frac{\epsilon}{t} \leq \left(1 - \frac{1}{\delta}\right)\frac{1}{2 - \delta}.
\]
However one can easily check that 
\[
\frac{\epsilon}{t} = \frac{ \sqrt{\epsilon}\sqrt{1 - \epsilon}}{1 - 2 \epsilon} = \left(1 - \frac{1}{\delta}\right)\frac{1}{2 - \delta}
\]
and so we are done.
\end{proof}

Iterating $m$ times, Lemmas \ref{lem:b2} and \ref{lem:b1} ensure that $w_m$ is above the roots of $Q_m$, where $Q_m$ is exactly the polynomial in Equation~(\ref{eqn:mixed1}) before the variables are set to $x$.
Furthermore, $w_m = (t + \delta)\1$.
Hence $w_m$ being above the roots of $Q_m$ implies the largest root of Equation~(\ref{eqn:mixed1}) is at most 
\[
t + \delta = 1 + 2 \sqrt{\epsilon}\sqrt{1 - \epsilon}
\]
as required for Theorem \ref{thm:mixed}. 

\begin{remark}
The argument here is more delicate than the one given in \cite{MSS}; this can be seen by comparing the statement of Lemma~\ref{lem:phi} to its analogous version \cite[Lemma~5.11]{MSS}.
In \cite{MSS}, any $\delta$ that caused the barrier function to contract also resulted in $w_{i+1}$ being above the roots of $Q_{i+1}$ (a fortiori).
This is not the case here and is the reason that the additional hypothesis of $(x_0, y_0)$ being above the roots is necessary in Lemma~\ref{lem:phi}.
This also becomes evident when considering the space of values $(\delta, t)$ for which Lemma~\ref{lem:b2} and Lemma~\ref{lem:b1} hold. 
In \cite{MSS}, the constraint provided by \cite[Lemma~5.11]{MSS} was the only relevant one in determining the optimal values of $\delta$ and $t$, whereas in our case both Lemma~\ref{lem:phi} and Lemma~\ref{lem:decreasing} provide nontrivial constraints.
\end{remark}


\section{Naimark's complements of frame partitions} \label{sec:frame_partitions}

In this section we establish a result that links properties of two complementary subsets of a Parseval frame with the corresponding subsets of the Naimark's complement. In general, a subset of a Parseval frame does not need to be a frame, and we can only expect it to be a Bessel sequence with bound $1$. In Proposition \ref{ce} we show that if this subset has a Bessel bound strictly less than $1$, then its corresponding Naimark's complement subset is a Riesz sequence.

It is rather surprising that Proposition \ref{ce} has not appeared in the frame theory literature before despite its simplicity and the elementary nature of its proof. However, it can be considered as a quantitative variant of the complementarity principle between spanning and linear independence due to Bodmann, Casazza, Paulsen, and Speegle \cite[Proposition 2.3]{BCPS}. We start with basic conventions in frame theory \cite{Ch}.

\begin{definition}\label{def1}
A family of vectors $\{\phi_i \}_{i\in I}$ in a Hilbert space
$\mathcal H$ is called a {\it frame} for $\mathcal H$ if
there are constants $0<A\le B < \infty$ (called 
{\it lower and upper frame bounds}, respectively)
so that 
\begin{equation}\label{E5}
A\|\phi\|^2 \le \sum_{i\in I}|\langle \phi,\phi_i \rangle |^2
\le
B\|\phi\|^2 \qquad\text{for all }\phi\in \mathcal H.
\end{equation}
If we only have the right hand inequality in \eqref{E5},
 we call $\{\phi_i\}_{i\in I}$
a {\it Bessel sequence} with Bessel bound $B$.
  If $A=B$, $\{\phi_i \}_{i\in I}$ is called a
{\it tight frame} and if $A=B=1$, it is called a
{\it Parseval frame}.
\end{definition}

\begin{definition}\label{def2}
 A family of vectors $\{\phi_i\}_{i\in I}$ in a Hilbert space $\mathcal H$
is a {\it Riesz sequence} if there are constants $A,B>0$ so that
for all $\{a_i\}\in \ell^2(I)$ we have
\begin{equation}\label{R7}
A\sum_{i\in I}|a_i |^2 \le \bigg\|\sum_{i\in I}a_i \phi_i \bigg\|^2 \le
B\sum_{i\in I}|a_i|^2.
\end{equation}
We call $A,B$ {\it lower and upper Riesz
bounds} for $\{\phi_i\}_{i\in I}$.
\end{definition}

Note that it suffices to verify \eqref{R7} only for sequences $\{a_i\}$ with finitely many non-zero coefficients, since a standard convergence argument yields the same bounds \eqref{R7} for all infinitely supported sequences $\{a_i\}\in \ell^2(I)$.
In general we do not require that frame, Bessel, and Riesz bounds in Definitions \ref{def1} and \ref{def2} are optimal. In particular, a Bessel sequence with bound $B$ is automatically a Bessel sequence with bound $B' \ge B$.

\begin{notation}
Throughout the rest of the paper $\{e_i\}_{i\in I}$ will
denote an orthonormal basis for whatever space we are
working in.
\end{notation}

\begin{proposition}\label{ce} Let $P:\ell^2(I) \to \ell^2(I)$ be the orthogonal projection onto a closed subspace $\mathcal H \subset \ell^2(I)$. Then, for any subset $J \subset I$ and $\delta>0$, the following are equivalent:
\begin{enumerate}[(i)]
\item
$\{ P e_i \}_{i \in J}$ is a Bessel sequence with bound $1-\delta$,
\item
$\{ P e_i \}_{i \in J^c}$ is a frame with lower bound $\delta$,
\item
$\{ (\mathbf I - P) e_i \}_{i \in J}$ is a Riesz sequence with lower bound $\delta$, where $\mathbf I$ is the identity on $\ell^2(I)$.
\end{enumerate}
\end{proposition}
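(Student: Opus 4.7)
The plan is to reduce all three conditions to a single inequality concerning the projection $P$ restricted to the subspace $\ell^2(J) = \{x \in \ell^2(I) : \operatorname{supp} x \subset J\}$, and then verify the reformulation matches each condition.

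First I would observe that $\{Pe_i\}_{i\in I}$ is itself a Parseval frame for $\mathcal{H}$. Indeed, for any $\phi \in \mathcal{H}$ we have $P\phi = \phi$, so $\langle \phi, Pe_i \rangle = \langle \phi, e_i \rangle$ and hence
\[
\sum_{i \in I} |\langle \phi, Pe_i \rangle|^2 = \sum_{i \in I} |\langle \phi, e_i \rangle|^2 = \|\phi\|^2.
\]
Splitting this identity along $J$ and $J^c$ immediately gives (i) $\Leftrightarrow$ (ii): the inequality $\sum_{i \in J} |\langle \phi, Pe_i \rangle|^2 \leq (1-\delta)\|\phi\|^2$ is equivalent to $\sum_{i \in J^c} |\langle \phi, Pe_i \rangle|^2 \geq \delta \|\phi\|^2$, and the upper frame bound $1$ in (ii) is automatic from the same Parseval identity.

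For (i) $\Leftrightarrow$ (iii), I would use the adjoint duality between the analysis and synthesis operators. The analysis operator of $\{Pe_i\}_{i \in J}$ is $T_J : \mathcal{H} \to \ell^2(J)$, $T_J \phi = (\langle \phi, e_i\rangle)_{i \in J}$, with adjoint $T_J^* a = P\bigl(\sum_{i \in J} a_i e_i\bigr)$. Hence (i) is equivalent to $\|T_J^*\|^2 \leq 1-\delta$, that is,
\[
\|Px\|^2 \leq (1-\delta)\|x\|^2 \qquad \text{for every } x \in \ell^2(J).
\]
On the other hand, for $x = \sum_{i \in J} a_i e_i$ linearity gives $(\mathbf{I}-P)x = \sum_{i \in J} a_i (\mathbf{I}-P)e_i$, and since $\mathbf{I}-P$ is an orthogonal projection,
\[
\Bigl\|\sum_{i \in J} a_i (\mathbf{I}-P)e_i \Bigr\|^2 = \|(\mathbf{I}-P)x\|^2 = \|x\|^2 - \|Px\|^2 = \sum_{i \in J} |a_i|^2 - \|Px\|^2.
\]
Thus the Riesz lower bound condition in (iii) is precisely the same inequality $\|Px\|^2 \leq (1-\delta)\|x\|^2$ for $x \in \ell^2(J)$. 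The Riesz upper bound $1$ is automatic, since $\mathbf{I}-P$ is a contraction.

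The argument is mostly bookkeeping once the right operators are in place; I do not foresee a genuine obstacle. The only step that requires care is identifying $T_J^*$ correctly as $P$ restricted to $\ell^2(J)$, and observing that the upper frame and upper Riesz bounds are free of charge from contractivity of $P$ and $\mathbf{I}-P$, so that only the two ``lower bound'' inequalities need to be compared.
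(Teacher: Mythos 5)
Your proof is correct and follows essentially the same route as the paper: the Parseval identity split over $J$ and $J^c$ for (i)$\Leftrightarrow$(ii), and the Pythagorean identity $\|x\|^2=\|Px\|^2+\|(\mathbf I-P)x\|^2$ on $\ell^2(J)$ together with the analysis/synthesis adjoint duality $\|T\|=\|T^*\|$ for (i)$\Leftrightarrow$(iii). The only cosmetic difference is that you package both lower-bound conditions as the single operator inequality $\|Px\|^2\le(1-\delta)\|x\|^2$ on $\ell^2(J)$, which the paper states implicitly via the synthesis operator.
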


\begin{proof}
Since $\{P e_i\}_{i\in I}$ is a Parseval frame for $\mathcal H$,   we have
\begin{equation}\label{ce1}
\|\phi\|^2=\sum_{i\in J} |\lan \phi, Pe_i \ran |^2 + \sum_{i\in J^c} |\lan \phi, P e_i \ran |^2 \qquad\text{for all } \phi \in \mathcal H.
\end{equation}
Thus, 
\begin{equation}\label{ce2}
\sum_{i\in J} |\lan \phi, Pe_i \ran |^2 \le (1-\delta) \|\phi\|^2
\iff  \sum_{i\in J^c} |\lan \phi, P e_i \ran |^2 \ge \delta \|\phi\|^2 \qquad\text{for all } \phi \in \mathcal H.
\end{equation}
By \eqref{ce1}, the Bessel sequence $\{ P e_i \}_{i \in J^c}$ inherits the Bessel bound $1$ as a subset of a Parseval frame. Thus, \eqref{ce2} shows the equivalence of (i) and (ii).

To show the equivalence of (i) and (iii), note that for any sequence of coefficients $\{a_i\} \in \ell^2(J)$,
\begin{equation}\label{ce3}
\sum_{i\in J} |a_i|^2 = \bigg\| \sum_{i\in J} a_i P e_i \bigg\|^2 + \bigg\|\sum_{i\in J} a_i (\mathbf I - P) e_i \bigg\|^2.
\end{equation}
Thus,
\begin{equation}\label{ce4}
\bigg\| \sum_{i\in J} a_i P e_i \bigg\|^2 \le (1-\delta) \sum_{i\in J} |a_i|^2 
\iff
\bigg\| \sum_{i\in J} a_i (\mathbf I - P) e_i \bigg\|^2 \ge \delta \sum_{i\in J} |a_i|^2.
\end{equation}
By \eqref{ce3}, the family $\{ (\mathbf I - P) e_i \}_{i \in J}$ has automatically the Riesz upper bound $1$.
Observe that the inequality in left hand side of \eqref{ce4} is equivalent to (i). This follows from the well-known fact that adjoint of the analysis operator
\[
T: \mathcal H \to \ell^2(I), \qquad T \phi = \{\lan \phi, P e_i \ran\}_{i\in J}, \quad \phi \in \mathcal H,
\]
is the synthesis operator
\[
T^*: \ell^2(I) \to \mathcal H, \qquad T^*(\{a_i\}_{i\in J}) = \sum_{i\in J} a_i Pe_i, \quad \{a_i\}_{i\in J} \in \ell^2(J).
\]
Since $||T||=||T^*||$, \eqref{ce4} yields the equivalence of (i) and (iii).
\end{proof}

\begin{remark}
A curious reader might ask what condition needs to be imposed about $\{ (\mathbf I - P) e_i \}_{i \in J^c}$ to obtain the equivalence in Proposition \ref{ce}. Surprisingly, this condition can not be easily stated in terms of Bessel, Riesz, or frame bounds. Instead, it is not difficult to show that the following restricted Riesz upper bound condition does the job:
\begin{equation}\label{ce5}\tag{iv}
\bigg\| \sum_{i\in J^c} \lan \phi, Pe_i \ran (\mathbf I - P) e_i \bigg\|^2 \le (1-\delta)\sum_{i\in J^c} |\lan \phi, Pe_i \ran|^2 \qquad\text{for all } \phi \in \mathcal H.
\end{equation}
Since this observation will not be used subsequently in the paper, we leave the verification of details to the reader. 
\end{remark}

As an immediate consequence of Proposition \ref{ce} we have

\begin{corollary}\label{cek}
Let $P: \ell^2(I) \to \ell^2(I)$ be the orthogonal projection onto a closed subspace $\mathcal H \subset \ell^2(I)$. Then, for any subset $J \subset I$ and $\delta>0$, the following are equivalent:
\begin{enumerate}[(i)]
\item
$\{ P e_i \}_{i \in J}$ is a frame with frame bounds $\delta$ and $1-\delta$,
\item
$\{ P e_i \}_{i \in J^c}$ is a frame with frame bounds $\delta$ and $1-\delta$,
\item
both $\{ P e_i \}_{i \in J}$  and $\{ P e_i \}_{i \in J^c}$ are Bessel sequences with bound $1-\delta$,
\item
both $\{ (\mathbf I - P) e_i \}_{i \in J}$ and $\{ (\mathbf I - P) e_i \}_{i \in J^c}$ are Riesz sequences with lower bound $\delta$.
\end{enumerate}
\end{corollary}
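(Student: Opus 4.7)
The plan is to derive Corollary \ref{cek} from Proposition \ref{ce} by applying that proposition with both $J$ and $J^c$ in the role of the subset, which is symmetric since $(J^c)^c = J$. Concretely, applying the equivalence (i)$\Leftrightarrow$(ii) of Proposition \ref{ce} to $J$ gives
\[
\{Pe_i\}_{i\in J}\text{ Bessel with bound }1-\delta \iff \{Pe_i\}_{i\in J^c}\text{ frame with lower bound }\delta,
\]
and swapping the role of $J$ and $J^c$ yields the companion statement with the two index sets exchanged. I would prepare both equivalences at the start and use them as the workhorses.

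The chain I would run is (iii)$\iff$(iv)$\iff$(i)$\iff$(ii), with (iii) as the central node. For (iii)$\iff$(iv) I would simply apply (i)$\Leftrightarrow$(iii) of Proposition \ref{ce} once on $J$ and once on $J^c$, which translates each Bessel bound $1-\delta$ into the corresponding Riesz lower bound $\delta$ for $\{(\mathbf I - P)e_i\}$. For (iii)$\Rightarrow$(i) I would observe that the Bessel bound $1-\delta$ on $\{Pe_i\}_{i\in J}$ is already the upper frame bound, while the Bessel bound $1-\delta$ on $\{Pe_i\}_{i\in J^c}$ converts, via the $J^c$-version of Proposition \ref{ce}(i)$\Leftrightarrow$(ii), into the lower frame bound $\delta$ on $\{Pe_i\}_{i\in J}$; conversely, (i) gives the Bessel bound $1-\delta$ on $\{Pe_i\}_{i\in J}$ directly, and its lower frame bound $\delta$ converts back to the Bessel bound $1-\delta$ on $\{Pe_i\}_{i\in J^c}$. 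By complete symmetry under $J \leftrightarrow J^c$, the same argument yields (iii)$\iff$(ii).

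Since each arrow is an immediate rewrite through Proposition \ref{ce}, there is no real obstacle to the argument; the only thing to be careful about is bookkeeping — in particular, making sure that in each application the claimed Bessel bound $1-\delta$ is strictly smaller than $1$ so that the hypothesis of Proposition \ref{ce} (which requires $\delta > 0$) is genuinely invoked, and that the roles of $J$ and $J^c$ are consistently tracked in each direction of the four equivalences.
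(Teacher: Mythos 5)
Your proposal is correct and follows essentially the same route as the paper: the paper's proof also takes (iii) as the hub and applies Proposition \ref{ce} simultaneously to $\{Pe_i\}_{i\in J}$ and $\{Pe_i\}_{i\in J^c}$ (i.e., with $J$ and $J^c$ each playing the role of the subset) to obtain (i), (ii), and (iv), and conversely notes that each of these implies (iii). Your version just spells out the bookkeeping that the paper leaves implicit.
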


\begin{proof}
Suppose that (iii) holds. Applying Proposition \ref{ce} simultaneously to Bessel sequences $\{ P e_i \}_{i \in J}$  and $\{ P e_i \}_{i \in J^c}$ yields the remaining properties (i), (ii), and (iv). Conversely, any of these properties implies (iii) in light of Proposition \ref{ce}.
\end{proof}


\section{Asymptotic bounds in Feichtinger Conjecture}\label{sec:asymptotics}

In this section we establish quantitative bounds in Feichtinger Conjecture. To achieve this we shall employ the results of the previous section and the landmark result of Marcus, Spielman, and Srivastava \cite[Corollary 1.5]{MSS}. In the language of Bessel sequences this result takes the following form, where $\mathcal{H}_n$ denotes $n$-dimensional real or complex Hilbert space $\mathbb{R}^n$ or $\mathbb{C}^n$.

\begin{theorem}\label{MSS}
Let $\{u_i\}_{i=1}^M \subset \mathcal H_n$ be a Bessel sequence with bound $1$ and $\|u_i\|^2 \le \delta$ for all $i$. Then for any positive integer $r$, there exists a partition $\{I_1,\ldots, I_r\}$ of $[M]$ such that each $\{u_i\}_{i\in  I_j}$, $j=1,\ldots,r$, is a Bessel sequence with bound 
\[
\left(\frac{1}{\sqrt{r}} + \sqrt{\delta}\right)^2.
\]
\end{theorem}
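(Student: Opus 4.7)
The plan is to deduce Theorem \ref{MSS} from Theorem \ref{thm:old} by the same block-tensor construction that underlies Corollary \ref{KS2}, now carried out with $r$ slots instead of $2$. Two preliminaries are needed: one to upgrade the Bessel hypothesis to a Parseval hypothesis, and one to extract a partition of $[M]$ from a successful realization in the enlarged Hilbert space.

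For the first step, set $S = \sum_{i=1}^M u_i u_i^*$, which satisfies $0 \preceq S \preceq I$, and take the spectral decomposition $I - S = \sum_{l=1}^n \lambda_l f_l f_l^*$ with $\lambda_l \in [0,1]$. For each $l$ I would replace the rank-one piece $\lambda_l f_l f_l^*$ by $N_l = \lceil \lambda_l/\delta\rceil$ copies of $\sqrt{\lambda_l/N_l}\, f_l$, each of squared norm at most $\delta$; appending these vectors to $\{u_i\}_{i=1}^M$ produces an augmented family $\{u_i\}_{i=1}^{M'}$ that is a Parseval frame for $\mathcal H_n$ and still obeys $\|u_i\|^2 \le \delta$. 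Moreover, if the augmented family can be partitioned with the claimed bound, the restriction of that partition to the original indices $[M]$ inherits the bound, since whenever $P,Q$ are positive semidefinite and $P + Q \preceq c I$ one has $Q \preceq c I$, so deleting the extra vectors from each block only decreases the Bessel bound.

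For the main step, assume $\sum_i u_i u_i^* = I_n$ and define independent random vectors in $\mathcal K := \mathcal H_n \otimes \mathbb C^r$ by
\[
v_i = \sqrt{r}\, u_i \otimes e_k \quad \text{with probability } 1/r, \qquad k = 1, \dots, r,
\]
where $\{e_k\}$ is the standard basis of $\mathbb C^r$. A short calculation gives $\mathbb E[v_i v_i^*] = u_i u_i^* \otimes I_r$ and $\mathbb E[\|v_i\|^2] = r\|u_i\|^2 \le r\delta$, so the hypotheses of Theorem \ref{thm:old} hold on $\mathcal K$ with $\epsilon = r\delta$. Hence there is a realization with
\[
\bigg\|\sum_i v_i v_i^*\bigg\| \le (1+\sqrt{r\delta})^2.
\]

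To finish, any realization of $v_i$ selects a single $k(i) \in [r]$, which determines a partition $I_k = \{i : k(i) = k\}$ of $[M']$. Because $u_i \otimes e_k$ lies in the $k$-th diagonal block of $\mathcal K$, the realized matrix $\sum_i v_i v_i^*$ is block-diagonal with $k$-th block equal to $r \sum_{i \in I_k} u_i u_i^*$, and the operator-norm bound above translates into $\|\sum_{i \in I_k} u_i u_i^*\| \le (1/\sqrt r + \sqrt\delta)^2$ for every $k$, which is the claimed Bessel bound. The only substantive bookkeeping is the augmentation step and the observation that Bessel bounds pass to sub-collections; beyond that I do not foresee any serious obstacle, as the construction is essentially the $r$-slot analogue of the proof of Corollary \ref{KS2} given above.
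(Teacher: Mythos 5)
Your proof is correct. It is worth noting, though, that the paper does not actually prove Theorem \ref{MSS} from scratch: it cites \cite[Corollary 1.5]{MSS} for the Parseval case and, in Remark \ref{eb}, appeals to the Schur--Horn theorem (as in the proof of Corollary \ref{PM}) to relax the Parseval hypothesis to a Bessel one. Your main step --- the random vectors $v_i=\sqrt{r}\,u_i\otimes e_k$ with probability $1/r$, the block-diagonal structure of a realization, and the rescaling $(1+\sqrt{r\delta})^2/r=(1/\sqrt r+\sqrt\delta)^2$ --- is exactly the argument behind the cited MSS corollary, so there you are reproducing the standard proof rather than inventing a new one. Where you genuinely diverge is the augmentation step: instead of invoking Schur--Horn majorization, you diagonalize $I-S=\sum_l\lambda_l f_lf_l^*$ and split each eigendirection into $N_l=\lceil\lambda_l/\delta\rceil$ copies of $\sqrt{\lambda_l/N_l}\,f_l$, which is an elementary and fully explicit construction that works here precisely because only the upper bound $\|u_i\|^2\le\delta$ (and not equal norms) is required of the added vectors; the paper needs the full Schur--Horn machinery only in Corollary \ref{PM}, where equal-norm completions are demanded. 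Your observation that deleting positive semidefinite summands can only decrease the Bessel bound correctly closes the reduction. In short: correct, self-contained, and slightly more elementary on the completion step than the route the paper gestures at.
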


\begin{remark}\label{eb}
Note that the original formulation \cite{MSS} of Theorem \ref{MSS} requires that $\{u_i\}_{i=1}^M$ is Parseval frame. 
This can be relaxed since any Bessel sequence $\{u_i\}_{i=1}^M$ with bound $1$ can be extended to a Parseval frame by adding additional vectors satisfying $\|u_i\|^2 \le \delta$ for $i=M+1, \ldots,\tilde M$, $\tilde M>M$. This is a consequence of the Schur--Horn Theorem \cite{AMRS, BJ}, see also the proof of Corollary \ref{PM}.
\end{remark}

Corollary \ref{KS2} gives us a quantitative version of Weaver Conjecture $KS_2$ with sharper constants than those deducible from Theorem \ref{MSS}. In particular, a simple rescaling of Corollary \ref{KS2} combined with Remark \ref{eb} yields the following theorem, which we state for Bessel sequences.

\begin{theorem}\label{mss}
Let $0<\delta_0<1/4$ and $\ve_0=1/2-\sqrt{2\delta_0(1-2\delta_0)}$.
Suppose that $\{\phi_i\}_{i=1}^M$ is a Bessel sequence in $\mathcal H_n$ with Bessel bound 1 and that $\|\phi_i\|^2 \le \delta_0$ for all $i$. Then there exists a partition $\{I_1, I_2\}$ of $[M]$ such that each $\{\phi_i\}_{i\in  I_j}$, $j=1,2$, is a Bessel sequence with bound $1-\ve_0$.
\end{theorem}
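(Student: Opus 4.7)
The plan is to reduce Theorem~\ref{mss} to Corollary~\ref{KS2} by passing to a Parseval frame and applying an appropriate rescaling. First I will use Remark~\ref{eb} (an application of the Schur--Horn theorem) to extend $\{\phi_i\}_{i=1}^M$ to a Parseval frame $\{\phi_i\}_{i=1}^{\tilde M}$ for $\mathcal{H}_n$ by appending finitely many additional vectors, chosen so that the uniform norm bound $\|\phi_i\|^2 \le \delta_0$ is preserved for all $i \le \tilde M$. This is possible because $\delta_0 < 1/4$ leaves plenty of room below $1$ to distribute the remaining diagonal entries of $\mathbf I - \sum_{i=1}^M \phi_i\phi_i^*$ as prescribed norms of the new vectors.

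Next I will rescale. Set $u_i = \phi_i/\sqrt{\delta_0}$, so $\|u_i\| \le 1$. Since $\{\phi_i\}_{i=1}^{\tilde M}$ is Parseval, for every unit vector $u$,
\[
\sum_{i=1}^{\tilde M} |\langle u, u_i\rangle|^2 \;=\; \frac{1}{\delta_0}\sum_{i=1}^{\tilde M}|\langle u, \phi_i\rangle|^2 \;=\; \frac{1}{\delta_0}\;=:\;\eta.
\]
Because $\delta_0 < 1/4$, we have $\eta > 4$, so Corollary~\ref{KS2} applies and yields a partition $\{I_1, I_2\}$ of $[\tilde M]$ such that
\[
\sum_{i \in I_k}|\langle u, u_i\rangle|^2 \;\le\; \eta - \theta \qquad(k=1,2,\ \|u\|=1),
\]
where (from the proof of that corollary) $\theta = \eta/2 - \sqrt{2(\eta-2)}$.

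Rescaling back gives $\sum_{i\in I_k}|\langle u,\phi_i\rangle|^2 \le \delta_0(\eta-\theta) = 1 - \delta_0\theta$, and a direct calculation
\[
\delta_0 \theta \;=\; \frac{1}{2} - \delta_0\sqrt{2\bigl(1/\delta_0 - 2\bigr)} \;=\; \frac{1}{2} - \sqrt{2\delta_0(1-2\delta_0)} \;=\; \ve_0
\]
identifies the resulting Bessel bound as exactly $1-\ve_0$. Finally, I will restrict the partition to $[M]$ by replacing each $I_k$ with $I_k \cap [M]$; since dropping vectors can only decrease Bessel bounds, each restricted subfamily $\{\phi_i\}_{i\in I_k\cap[M]}$ remains Bessel with bound $1-\ve_0$, completing the proof.

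I do not anticipate a serious obstacle: the Schur--Horn extension is standard (and already invoked in Remark~\ref{eb}), and the only real content beyond Corollary~\ref{KS2} is the arithmetic identity that turns the scaled gap $\delta_0\theta$ into the stated quantity $\ve_0$. The delicate point worth double-checking is that the rescaling $u_i = \phi_i/\sqrt{\delta_0}$ indeed yields $\|u_i\|\le 1$ and that $\epsilon = 2/\eta = 2\delta_0 < 1/2$, which is precisely the hypothesis needed to invoke Theorem~\ref{thm:new} through Corollary~\ref{KS2}.
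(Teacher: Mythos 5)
Your proposal is correct and is exactly the argument the paper intends: the paper derives Theorem~\ref{mss} by "a simple rescaling of Corollary~\ref{KS2} combined with Remark~\ref{eb}," which is precisely your Schur--Horn extension to a Parseval frame, the rescaling $u_i=\phi_i/\sqrt{\delta_0}$ with $\eta=1/\delta_0>4$ and $\epsilon=2\delta_0<1/2$, and the identity $\delta_0\theta=\ve_0$. All the checks you flag (the norm bound after rescaling, the range of $\epsilon$, and the harmlessness of restricting the partition back to $[M]$) go through as you describe.
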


\begin{notation}Since the above constants might potentially be improved in the future, we shall keep $\delta_0$ and $\epsilon_0$ as base parameters that shall propagate to all remaining results in this section. Consequently, we shall fix the constants $\delta_0$ and $\ve_0$ as in Theorem \ref{mss} throughout this section. In particular, combining Corollary \ref{cek} with Theorem \ref{mss} yields the following result with the same constants $\delta_0$ and $\epsilon_0$.
\end{notation}

\begin{corollary}\label{AM}
Suppose that $\{\phi_i\}_{i=1}^M$ is a Parseval frame for $\mathcal H_n$ and $\|\phi_i\|^2 \ge 1-\delta_0$ for all $i$. Then there exists a partition $\{I_1, I_2\}$ of $[M]$ such that each $\{\phi_i\}_{i\in  I_j}$, $j=1,2$, is a Riesz sequence with lower bound $\ve_0$.
\end{corollary}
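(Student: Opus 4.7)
The plan is to pass to the Naimark complement of the given Parseval frame, apply Theorem~\ref{mss} there, and then convert the Bessel partition back into a Riesz partition via Corollary~\ref{cek}. To that end, invoke Naimark's dilation theorem to realize $\{\phi_i\}_{i=1}^M$ as $\phi_i = Pe_i$, where $P:\ell^2([M])\to\ell^2([M])$ is the orthogonal projection onto an $n$-dimensional subspace $\mathcal H$ that we identify with $\mathcal H_n$.

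Next, consider the complementary family $\{(\mathbf I-P)e_i\}_{i=1}^M$ in the subspace $(\mathbf I-P)\ell^2([M])$. Since $\|Pe_i\|^2+\|(\mathbf I-P)e_i\|^2=1$ and $\|\phi_i\|^2\ge 1-\delta_0$, we obtain the norm bound
\[
\|(\mathbf I-P)e_i\|^2\le\delta_0\qquad\text{for all }i=1,\ldots,M.
\]
Moreover, $\{(\mathbf I-P)e_i\}_{i=1}^M$ is a Parseval frame for $(\mathbf I-P)\ell^2([M])$, and in particular a Bessel sequence with bound $1$. Thus the hypotheses of Theorem~\ref{mss} are met for the Naimark complement, and we obtain a partition $\{I_1,I_2\}$ of $[M]$ such that each subfamily $\{(\mathbf I-P)e_i\}_{i\in I_j}$, $j=1,2$, is a Bessel sequence with bound $1-\ve_0$.

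Finally, apply Corollary~\ref{cek} with the roles of the projections swapped, that is, to the projection $\mathbf I-P$ (whose ``Naimark complement'' is exactly $P$). The condition that both $\{(\mathbf I-P)e_i\}_{i\in I_1}$ and $\{(\mathbf I-P)e_i\}_{i\in I_2}$ are Bessel sequences with bound $1-\ve_0$ is precisely condition (iii) of Corollary~\ref{cek} (applied to $\mathbf I-P$), and hence yields condition (iv): both $\{Pe_i\}_{i\in I_1}=\{\phi_i\}_{i\in I_1}$ and $\{Pe_i\}_{i\in I_2}=\{\phi_i\}_{i\in I_2}$ are Riesz sequences with lower bound $\ve_0$, as desired.

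There is no serious obstacle here: the proof is a clean three-step chain (Naimark dilation, Theorem~\ref{mss} applied to the complement, Corollary~\ref{cek} to transport the conclusion back). The only minor point that requires care is keeping the bookkeeping straight between the two projections $P$ and $\mathbf I-P$, and in particular noting that the condition $\|\phi_i\|^2\ge 1-\delta_0$ on the original frame translates exactly into the small-norm hypothesis $\|(\mathbf I-P)e_i\|^2\le\delta_0$ needed to feed into Theorem~\ref{mss}.
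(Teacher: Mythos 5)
Your proof is correct and follows essentially the same route as the paper's: Naimark dilation to write $\phi_i = Pe_i$, Theorem~\ref{mss} applied to the complementary family $\{(\mathbf I-P)e_i\}$ (whose norms satisfy $\|(\mathbf I-P)e_i\|^2\le\delta_0$), and Corollary~\ref{cek} (with the projection $\mathbf I - P$ in the role of $P$) to transport the Bessel bound $1-\ve_0$ back to the Riesz lower bound $\ve_0$ for $\{\phi_i\}_{i\in I_j}$. Your write-up is in fact slightly more explicit than the paper's about which projection Corollary~\ref{cek} is being applied to.
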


\begin{proof}
By Naimark's Theorem we can write $\phi_i=P e_i$, where $P$ is an orthogonal projection of $\mathcal H_M$ onto $n$-dimensional subspace identified with $\mathcal H_n$. Applying Theorem \ref{mss} to $\{(\mathbf I - P)e_i\}_{i=1}^M$ yields a partition $\{I_1, I_2\}$ of $[M]$ such that each $\{(\mathbf I - P)e_i\}_{i\in  I_j}$, $j=1,2$, is a Bessel sequence with bound $1-\ve_0$. By Corollary \ref{cek}, each $\{Pe_i\}_{i\in I_j}$, $j=1,2$, is a Riesz sequence with lower bound $\ve_0$.
\end{proof}

Next, we extend the validity of Corollary \ref{AM} to Bessel sequences. This requires a more sophisticated approach than what was outlined in Remark \ref{eb}. 

\begin{corollary}\label{PM}
Suppose that $\{\phi_i\}_{i=1}^M$ is a Bessel sequence with bound $B$ for $\mathcal H_n$ and $\|\phi_i\|^2 \ge B(1-\delta_0)$ for all $i$. Then there exists a partition $\{I_1, I_2\}$ of $[M]$ such that each $\{\phi_i\}_{i\in  I_j}$, $j=1,2$, is a Riesz sequence with lower bound $B\ve_0$.
\end{corollary}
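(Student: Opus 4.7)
The plan is to reduce to the Bessel bound $1$ case and then construct a Gram-matrix dilation of the Bessel sequence into an orthonormal family sitting in a larger ambient Hilbert space. In this enlarged picture each original vector will be paired with a ``complementary'' vector of small norm, which is the object to which Theorem \ref{mss} gets applied. The resulting partition of $[M]$ will then be pulled back to the original sequence by exploiting the orthonormality of the pairs.

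I would first set $\tilde\phi_i = \phi_i/\sqrt{B}$, reducing to the case $B=1$ with $\|\tilde\phi_i\|^2 \ge 1-\delta_0$ and Bessel bound $1$. The Gram matrix $G = [\langle \tilde\phi_i, \tilde\phi_j\rangle]_{i,j=1}^M$ then satisfies $0 \le G \le \mathbf I_M$. Define $\xi_i \in \ell^2(M)$ to be the $i$-th column of $(\mathbf I_M - G)^{1/2}$, so that the Gram matrix of $\{\xi_i\}$ is $\mathbf I_M - G$; in particular $\|\xi_i\|^2 = 1-\|\tilde\phi_i\|^2 \le \delta_0$, and the direct-sum vectors $\Psi_i := \tilde\phi_i \oplus \xi_i \in \mathcal H_n \oplus \ell^2(M)$ form an orthonormal family.

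Since $\{\xi_i\}$ is itself a Bessel sequence in $\ell^2(M)$ with bound $\|\mathbf I_M - G\| \le 1$ and satisfies $\|\xi_i\|^2 \le \delta_0 < 1/4$, Theorem \ref{mss} yields a partition $\{I_1, I_2\}$ of $[M]$ such that each $\{\xi_i\}_{i \in I_j}$ is Bessel with bound $1-\ve_0$. The orthonormality of $\{\Psi_i\}$ then gives, for any finitely supported $\{a_i\}_{i \in I_j}$,
\[
\sum_{i \in I_j} |a_i|^2 = \bigg\|\sum_{i \in I_j} a_i \Psi_i\bigg\|^2 = \bigg\|\sum_{i \in I_j} a_i \tilde\phi_i\bigg\|^2 + \bigg\|\sum_{i \in I_j} a_i \xi_i\bigg\|^2,
\]
so $\big\|\sum_{i \in I_j} a_i \tilde\phi_i\big\|^2 \ge \ve_0 \sum_{i \in I_j} |a_i|^2$. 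Rescaling by $\sqrt{B}$ then delivers the Riesz lower bound $B\ve_0$ for $\{\phi_i\}_{i \in I_j}$.

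The main obstacle to avoid is the naive route of extending $\{\tilde\phi_i\}$ to a Parseval frame in $\mathcal H_n$ in order to invoke Corollary \ref{AM} directly: this would require adding vectors of squared norm at least $1-\delta_0$, which is generally infeasible via Schur--Horn when $\mathbf I - \tilde S$ has only small eigenvalues. The Gram-matrix dilation is the key workaround; it plays the role that Naimark's theorem played in Corollary \ref{AM}, but allows the auxiliary small-norm Bessel sequence to live in $\ell^2(M)$ rather than in $\mathcal H_n$, so no norm bookkeeping on an extension is needed.
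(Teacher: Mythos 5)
Your proof is correct, and it takes a genuinely different route from the paper. The paper extends $\{\phi_i/\sqrt{B}\}$ to a Parseval frame of an enlarged space $\mathcal H_{n+N}$: it uses the Schur--Horn theorem to manufacture $K=N+n$ additional vectors of constant squared norm $C\in[1-\delta_0,1]$ whose frame operator is $\mathbf I_{n+N}-S\oplus\mathbf 0_N$ (the padding by $N$ extra unit eigenvalues is exactly what makes the majorization condition attainable), and then invokes Corollary \ref{AM} --- i.e.\ the Naimark complement machinery of Corollary \ref{cek} together with Theorem \ref{mss} --- before restricting the partition back to $[M]$. You instead dilate at the level of the Gram matrix: the complementary vectors $\xi_i=\col{i}{(\mathbf I_M-G)^{1/2}}$ are written down explicitly, Theorem \ref{mss} is applied to them directly (all hypotheses check out: Gram matrix $\mathbf I_M-G$ gives Bessel bound $\le 1$ and $\|\xi_i\|^2=1-\|\tilde\phi_i\|^2\le\delta_0$), and the lower Riesz bound drops out of the Pythagorean identity for the orthonormal system $\Psi_i=\tilde\phi_i\oplus\xi_i$. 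What your approach buys is the elimination of the Schur--Horn step and the ``$N$ sufficiently large'' bookkeeping, making the argument self-contained; what you reuse implicitly is the content of Proposition \ref{ce}(iii), since your final Pythagorean computation (together with $\|T\|=\|T^*\|$ to convert the Bessel bound on $\{\xi_i\}_{i\in I_j}$ into a synthesis-operator bound) is exactly the argument given there. In effect you have realized the Naimark dilation concretely through the Gram matrix rather than abstractly through a frame extension, and your closing remark correctly identifies why the naive extension inside $\mathcal H_n$ cannot work.
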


\begin{proof}
Without loss of generality we can assume that the Bessel bound $B=1$.
Suppose that the frame operator of $\{\phi_i\}_{i=1}^M$, which is given by
\[
S = \sum_{i=1}^M \phi_ i \phi_i^*,
\]
has eigenvalues $1\ge \lambda_1 \ge \lambda_2 \ge \ldots \ge \lambda_n \ge 0$. For a fixed $N$, consider an operator $\tilde S = \mathbf I_{n+N} - S \oplus \mathbf 0_N$ on $\mathcal H_{n+N}=\mathcal H_n\oplus \mathcal H_N$, where $\mathbf 0_N$ is the zero operator on $\mathcal H_N$. Then, $\tilde S$ has the following eigenvalues listed in decreasing order 
\begin{equation}\label{PM2}
\underbrace{1,\ldots,1}_{N},1-\lambda_n, \ldots, 1-\lambda_1.
\end{equation}
Our goal is to find a collection of vectors $\{\phi_{M+i}\}_{i=1}^K$ in $\mathcal H_{n+N}$ such that:
\begin{enumerate}[(i)]
\item its frame operator is $\tilde S$, and
\item
$\|\phi_{M+i} \|^2=C$ for all $i=1,\ldots, K$ for some constant $C \in [1-\delta_0,1]$.
\end{enumerate} 
By the Schur--Horn Theorem \cite{AMRS, BJ} this is possible if and only if the sequence \eqref{PM2} majorizes
\[
\underbrace{C,\ldots,C}_K.
\]
This, in turn, is implied by
\begin{equation}\label{PM3}
K \ge N+n \qquad\text{and}\qquad CK=N+\sum_{i=1}^n (1-\lambda_i).
\end{equation}
By choosing sufficiently large $N$ and $K=N+n$, we have that
\[
1-\delta_0 \le C=\frac{N+\sum_{i=1}^n (1-\lambda_i)}{N+n} \le 1.
\]
This shows the existence of vectors $\{\phi_{M+i}\}_{i=1}^K$ satisfying (i) and (ii).

Now, $\{\phi_i\}_{i=1}^{M+K}$ is a Parseval frame for $\mathcal H_{n+N}$ such that $\| \phi_i \|^2 \ge 1-\delta_0$ for all $i$. By Corollary \ref{AM} there exists a partition $\tilde I_1$, $\tilde I_2$ of $[M+K]$ such that each $\{\phi_i\}_{i\in  \tilde I_j}$, $j=1,2$, is a Riesz sequence with lower bound $\ve_0$. Then, $I_j= \tilde I_j \cap [M]$ is the required partition of $[M]$.
\end{proof}

We are now ready to show the asymptotic estimate on the size of a partition in the Feichtinger Conjecture.

\begin{theorem} \label{fei}
Suppose that $\{\phi_i\}_{i=1}^M$ is a Bessel sequence with bound $1$ for $\mathcal H_n$ and $\|\phi_i\|^2 \ge \ve>0$ for all $i$. Then there exists $r=r(\ve)=O(1/\ve)$ and a partition $\{I_1,\ldots, I_r\}$ of $[M]$ such that each $\{\phi_i\}_{i\in  I_j}$, $j=1,\ldots, r$, is a Riesz sequence with lower bound $\ve_0\ve$. In addition, if $\|\phi_i\|^2 = \ve$ for all $i$, then the upper Riesz bound of each $\{\phi_i\}_{i\in  I_j}$ is $\frac{\ve}{1-\delta_0}$.
\end{theorem}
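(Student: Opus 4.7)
The plan is to handle the equal-norm case $\|\phi_i\|^2=\ve$ first, then reduce the general case to it by a one-line rescaling. Both tools from Section~\ref{sec:asymptotics}---Theorem~\ref{MSS} to tame the Bessel bound, then Corollary~\ref{PM} to cut each piece into two Riesz sequences---will be applied in this order.

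\emph{Step 1 (equal-norm case).} Assume $\|\phi_i\|^2=\ve$. Let $c:=(1-\delta_0)^{-1/2}-1>0$ and $r_1:=\lceil 1/(\ve c^2)\rceil$, so $1/\sqrt{r_1}\le\sqrt{\ve}\,c$. Apply Theorem~\ref{MSS} with $\delta=\ve$ to partition $[M]$ into $r_1=O(1/\ve)$ subsets, each a Bessel sequence with bound
\[
B\le\bigl(1/\sqrt{r_1}+\sqrt{\ve}\bigr)^2\le\ve(1+c)^2=\frac{\ve}{1-\delta_0}.
\]
On each such subset the relation $\|\phi_i\|^2=\ve=\frac{\ve}{1-\delta_0}(1-\delta_0)$ holds with equality, so the hypothesis of Corollary~\ref{PM} is met with ``Bessel bound'' $\ve/(1-\delta_0)$. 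Applying it splits each subset into two Riesz sequences with lower bound $\ve_0\cdot\ve/(1-\delta_0)\ge\ve_0\ve$, and with upper Riesz bound inherited from the Bessel bound, namely at most $\ve/(1-\delta_0)$. The total number of pieces is $2r_1=O(1/\ve)$.

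\emph{Step 2 (general case).} For $\|\phi_i\|^2\ge\ve$, set $\tilde\phi_i:=\sqrt{\ve}\,\phi_i/\|\phi_i\|$, so $\|\tilde\phi_i\|^2=\ve$. Because the scaling factor $\sqrt{\ve}/\|\phi_i\|\le 1$, for any $\phi\in\mathcal{H}_n$
\[
\sum_i|\langle\phi,\tilde\phi_i\rangle|^2=\sum_i\frac{\ve}{\|\phi_i\|^2}|\langle\phi,\phi_i\rangle|^2\le\sum_i|\langle\phi,\phi_i\rangle|^2\le\|\phi\|^2,
\]
so $\{\tilde\phi_i\}$ remains Bessel with bound $1$. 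Step~1 yields a partition $\{I_1,\dots,I_r\}$ with $r=O(1/\ve)$ such that each $\{\tilde\phi_i\}_{i\in I_j}$ is Riesz with lower bound $\ve_0\ve$. Pulling back, for finitely supported $\{b_i\}$ with $a_i:=b_i\|\phi_i\|/\sqrt{\ve}$,
\[
\bigg\|\sum_{i\in I_j}b_i\phi_i\bigg\|^2=\bigg\|\sum_{i\in I_j}a_i\tilde\phi_i\bigg\|^2\ge\ve_0\ve\sum_{i\in I_j}|a_i|^2=\ve_0\sum_{i\in I_j}|b_i|^2\|\phi_i\|^2\ge\ve_0\ve\sum_{i\in I_j}|b_i|^2,
\]
which is the required Riesz lower bound for $\{\phi_i\}_{i\in I_j}$.

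The main obstacle is the delicate calibration in Step~1: the partition size $r_1$ from Theorem~\ref{MSS} must be large enough to drop each resulting Bessel bound to $\ve/(1-\delta_0)$ (so that Corollary~\ref{PM} is applicable), yet small enough to respect the target $r=O(1/\ve)$. This is possible precisely because $c=(1-\delta_0)^{-1/2}-1>0$, forcing $r_1\sim 1/(\ve c^2)$; any attempt to stratify vectors by norm dyadically instead would require a scale factor smaller than $1/(1-\delta_0)$, giving only $O(\log(1/\ve))$ strata but adding unnecessary book-keeping and slightly worse constants. Once Step~1 is in place, Step~2 is a routine rescaling argument.
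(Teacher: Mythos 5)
Your proposal is correct and follows essentially the same route as the paper: reduce to the equal-norm case by the rescaling $\phi_i\mapsto\sqrt{\ve}\,\phi_i/\|\phi_i\|$, apply Theorem~\ref{MSS} with a partition size chosen exactly so that each piece has Bessel bound at most $\ve/(1-\delta_0)$, and then invoke Corollary~\ref{PM} to split each piece into two Riesz sequences; your condition $1/\sqrt{r_1}\le\sqrt{\ve}\bigl((1-\delta_0)^{-1/2}-1\bigr)$ is just the paper's inequality \eqref{fei2} in sharper form. The only cosmetic differences are the order of the two steps and that you spell out the pull-back of the Riesz lower bound under rescaling, which the paper leaves implicit.
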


\begin{proof} 
First, observe that without loss of generality we can assume that $\|\phi_i\|^2=\ve$ for all $i$. Indeed, we can replace each $\phi_i$ by $\frac{\sqrt{\ve}}{\|\phi_i\|} \phi_i$ and then apply Theorem \ref{fei} to get the general case.

By Theorem \ref{MSS} for each $\tilde r$ we can find a partition $\{\tilde I_j\}_{j=1}^{\tilde r}$ of $[M]$ such that each $\{\phi_i\}_{i\in  \tilde I_j}$  is a Bessel sequence with bound 
\[
B=\bigg(\frac{1}{\sqrt{\tilde r}} + \sqrt{\ve}\bigg)^2.
\]
Then we wish to apply Corollary \ref{PM} to each such $\{\phi_i\}_{i\in  \tilde I_j}$. This is possible if we choose $\tilde r$ such that
\begin{equation}\label{fei2}
\|\phi_i\|^2 = \ve \ge B(1-\delta_0)= \bigg(\frac{1}{\sqrt{\tilde r}} + \sqrt{\ve}\bigg)^2(1-\delta_0).
\end{equation}
A simple calculation shows that the above inequality simplifies to
\[
\frac{2}{\sqrt{\tilde r \ve}} + \frac{1}{\tilde r \ve} \le \frac{\delta_0}{1-\delta_0}.
\]
Hence, it suffices to choose 
\[
\tilde r \ge \frac{9}{\ve} \bigg(\frac{1-\delta_0}{\delta_0} \bigg)^2.
\]
By Corollary \ref{PM}, each $\{\phi_i\}_{i\in  \tilde I_j}$ can be partitioned into two 2 Riesz sequences with lower bound $B\ve_0 \ge \ve_0 \ve$ and upper bound $B\le \frac{\ve}{1-\delta_0}$.
This gives the required partition of size $r=2\tilde r$ and completes the proof of Theorem \ref{fei}.
\end{proof}

\begin{remark} Note that choosing $\tilde r$ so that \eqref{fei2} is almost an equality yields the slightly better lower bound $\frac{\ve_0}{1-\delta_0} \ve$ in the conclusion of Theorem \ref{fei}. Moreover, since we can take any $\delta_0<1/4$, we obtain an explicit estimate on the size of a partition $r=r(\ve)\le 162/\ve$. Surely, the number $162$ is far from sharp, but it merely gives a crude upper bound on optimal constant.
\end{remark}

\subsection{Infinite dimensional results}
Corollary \ref{KS2} can be easily extended to the infinite dimensional setting using the ``pinball principle" \cite[Proposition 2.1]{CCLV}, which we state here for the reader's convenience.

\begin{theorem}\label{pinball} 
Fix a natural number $r$ and assume for every natural number $n$, we have a partition $\{I_i^n\}_{i=1}^r$ of $[n]$. Then there are natural numbers $\{n_1 < n_2 < \cdots \}$ so that if $j \in I_i^{n_j}$ for some $i \in [r]$, then $j \in I_i^{n_k}$ for all $k \ge j$. For any $i \in [r]$ define $I_i = \{j: j\in I_i^{n_j}\}$. Then,
\begin{enumerate}[(i)]
\item $\{I_i\}_{i=1}^r$ is a partition of $\N$.
\item If $I_i = \{j_1 < j_2 < \cdots\}$, then for every natural number $k$ we have $\{j_1,j_2,\ldots,j_k\} \subset I_i^{n_{j_k}}.$
\end{enumerate}
\end{theorem}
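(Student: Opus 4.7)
The plan is a standard diagonal pigeonhole extraction. I will construct a nested decreasing sequence of infinite sets $\N = A_0 \supset A_1 \supset A_2 \supset \cdots$ together with colors $c_1, c_2, \ldots \in [r]$ with the following property: for every $n \in A_j$ one has $n \ge j$ and $j \in I_{c_j}^n$. Once this is done, I will thin to the desired subsequence by choosing $n_j \in A_j$ with $n_j > n_{j-1}$, and then set $I_i = \{j \in \N : c_j = i\}$.

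To build $A_j$ from an infinite set $A_{j-1}$, I consider the still-infinite collection $\{n \in A_{j-1} : n \ge j\}$. On each such $n$, the element $j$ is placed into exactly one of the finitely many parts $I_1^n, \ldots, I_r^n$ of the partition of $[n]$, so this assignment defines a function into the finite set $[r]$. The pigeonhole principle produces a color $c_j \in [r]$ together with an infinite subset $A_j \subseteq \{n \in A_{j-1} : n \ge j\}$ on which that color is attained, i.e., $j \in I_{c_j}^n$ for all $n \in A_j$. Since $A_j$ is infinite, I can select $n_j \in A_j$ with $n_j > n_{j-1}$.

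It then remains to verify the stability statement and parts (i), (ii). For any $k \ge j$, the inclusion $n_k \in A_k \subseteq A_j$ forces $j \in I_{c_j}^{n_k}$; in particular $j \in I_{c_j}^{n_j}$, so the hypothesis $j \in I_i^{n_j}$ in the theorem pins down $i = c_j$, and the conclusion $j \in I_i^{n_k}$ follows. Part (i) is then immediate: since $\{I_i^{n_j}\}_{i=1}^r$ partitions $[n_j] \ni j$, every $j \in \N$ lies in exactly one $I_{c_j}^{n_j}$, hence in exactly one $I_{c_j}$. For (ii), if $I_i = \{j_1 < j_2 < \cdots\}$ and $\ell \le k$, then $j_\ell \in I_i^{n_{j_\ell}}$ by definition of $I_i$, and the stability statement applied to the element $j_\ell$ with index $j_k \ge j_\ell$ gives $j_\ell \in I_i^{n_{j_k}}$.

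I do not foresee any real obstacle here: the mechanism is the familiar extraction of a ``limit partition'' from a sequence of partitions of growing initial segments of $\N$. The only mild bookkeeping subtlety is that each $A_j$ must be built inside $\{n : n \ge j\}$ in order to guarantee $n_j \ge j$, which in turn is what makes the hypothesis of the stability statement available at the $j$-th index.
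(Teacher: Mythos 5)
Your argument is correct and complete: the nested pigeonhole extraction $A_0\supset A_1\supset\cdots$ with $A_j\subseteq\{n\ge j\}$ gives exactly the stability property, and (i), (ii) follow as you say. Note that the paper does not prove this result at all but quotes it from \cite[Proposition 2.1]{CCLV} (the ``pinball principle''), and your diagonalization is essentially the standard proof given there.
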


\begin{theorem}
\label{main1} 
Suppose $\{\phi_i\}_{i\in I}$ is a Bessel sequence in a separable Hilbert space $\mathcal{H}$ with constant $\eta>4$, which consists of vectors of norms $\|\phi_i\|\leq 1$. That is, 
\[
\sum_{i\in I} |\langle \phi,\phi_i \rangle |^2 \le \eta \|\phi\|^2
\qquad\text{for all }\phi\in\mathcal{H}.
\]
Then there exists a constant $\theta=\theta(\eta)>0$, depending only on $\eta>4$, such that the index set $I$ can be decomposed into subsets $I_1$ and $I_2$, so that the Bessel sequences $\{\phi_i\}_{i\in I_j}$, $j=1,2$, have bounds $\le \eta-\theta$, i.e.,
\[
\sum_{i\in I_j} |\langle \phi,\phi_i \rangle |^2 \le (\eta-\theta) \|\phi\|^2
\qquad\text{for all }\phi\in\mathcal H, \ j=1,2.
\]
\end{theorem}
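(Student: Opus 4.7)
The plan is to reduce Theorem \ref{main1} to Corollary \ref{KS2}: first prove a finite-index version by extending the given family to a tight frame in its span, and then lift to the infinite index set via the pinball principle (Theorem \ref{pinball}). Since $\mathcal H$ is separable, at most countably many of the $\phi_i$ are nonzero (expand each $\phi_i$ in a countable orthonormal basis and use Bessel summability), so we may assume $I = \mathbb{N}$ after assigning any zero vectors to $I_1$.

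For the finite case, fix $n$ and let $\mathcal H_n' := \mathrm{span}\{\phi_i\}_{i=1}^n$, which is finite dimensional. The operator $T := \eta \, \mathbf I_{\mathcal H_n'} - \sum_{i=1}^n \phi_i \phi_i^*$ is positive semidefinite by the Bessel hypothesis, so I would spectrally decompose $T = \sum_k \mu_k e_k e_k^*$ and split each rank-one term $\mu_k e_k e_k^*$ into $\lceil \mu_k \rceil$ equal copies, producing a finite family $\{\psi_j\}_{j \in J_n}$ in $\mathcal H_n'$ with $\|\psi_j\|^2 = \mu_k/\lceil \mu_k \rceil \le 1$ and $\sum_j \psi_j \psi_j^* = T$. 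The combined family $\{\phi_i\}_{i=1}^n \cup \{\psi_j\}_{j \in J_n}$ then has frame operator $\eta\, \mathbf I_{\mathcal H_n'}$ with all vectors of norm at most $1$, satisfying the hypotheses of Corollary \ref{KS2}. That corollary produces a partition $\tilde I_1^n \sqcup \tilde I_2^n$ of the extended index set with each subfamily Bessel with bound $\eta - \theta$, where $\theta := \eta/2 - \sqrt{2(\eta-2)} > 0$ depends only on $\eta$. Setting $I_k^n := \tilde I_k^n \cap [n]$ yields a partition of $[n]$ whose parts inherit the Bessel bound $\eta - \theta$.

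To pass from finite to infinite, I would feed the sequence of partitions $\{I_1^n, I_2^n\}_{n \ge 1}$ into Theorem \ref{pinball}, obtaining a partition $\{I_1, I_2\}$ of $\mathbb{N}$ such that every finite initial segment of each $I_k$ is contained in some $I_k^{n}$. For any $\phi \in \mathcal H$ and finite $F \subseteq I_k$, the inclusion $F \subseteq I_k^{n}$ gives $\sum_{i \in F} |\langle \phi, \phi_i \rangle|^2 \le (\eta - \theta)\|\phi\|^2$, and taking the supremum over such $F$ transfers the bound to all of $I_k$.

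The main obstacle is securing \emph{uniformity} of the gap $\theta$ across all truncations $n$; this uniformity is precisely the content of Corollary \ref{KS2}, where $\theta = \theta(\eta)$ depends only on $\eta$ and not on the dimension or cardinality. A minor technical point is guaranteeing $\|\psi_j\| \le 1$ for the auxiliary vectors used to build the $\eta$-tight frame over $\mathcal H_n'$, which the splitting step above handles cleanly.
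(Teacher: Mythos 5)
Your proof is correct and follows essentially the same route as the paper: partition the finite truncations $\{\phi_i\}_{i=1}^n$ using the quantitative two-set Weaver bound and then pass to $I=\N$ via the pinball principle, with the uniformity of $\theta$ coming from Corollary~\ref{KS2}. The only difference is cosmetic: where the paper invokes Theorem~\ref{mss} (whose reduction to Corollary~\ref{KS2} uses the Schur--Horn extension of Remark~\ref{eb}), you extend $\{\phi_i\}_{i=1}^n$ to an $\eta$-tight frame for its span directly by splitting the rank-one spectral components of $\eta\,\mathbf I - \sum_i \phi_i\phi_i^*$, which is an equally valid and somewhat more self-contained way to meet the hypotheses of Corollary~\ref{KS2}.
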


\begin{proof}
Without loss of generality, we can assume that the index set $I = \N$. For each $n \in \N$, the set $\{\phi_i\}_{i=1}^n$ is a Bessel sequence in $\mathcal{H}$ with Bessel bound $\eta$. By Theorem \ref{mss}, for each $n$, there exists a partition $\{I_1^n, I_2^n\}$ of $[n]$ such that for each $j = 1,2$, $\{\phi_i\}_{i\in I^n_j}$ is a Bessel sequence with bound $\eta - \theta$. Let $\{I_1, I_2\}$ be the partition of $\N$ obtained by applying Theorem \ref{pinball} to the sequence of partitions $\{I_1^n, I_2^n\}_{n=1}^\infty$. For any $j = 1,2$, we write $I_j = \{j_1 < j_2 < \cdots\}$. By Theorem \ref{pinball}(ii) for any $k\in \N$, $\{\phi_{j_1},\ldots, \phi_{j_k}\}$ is a Bessel sequence with bound $\eta - \theta$. Since $k\in\N$ is arbitrary, $\{\phi_i : i\in I_j\}$ is a Bessel sequence with bound $\eta - \theta$ for $j = 1,2$, as desired. 
\end{proof}

 In a similar way, the ``pinball principle'' implies the infinite dimensional Theorem \ref{fei-infty} from finite dimensional Theorem \ref{fei}. Theorem \ref{fei-infty} is simply a rescaled variant of Theorem \ref{main2}.

\begin{theorem} \label{fei-infty}
Suppose that $\{\phi_i\}_{i\in I} $ is a Bessel sequence with bound $B$ for a separable Hilbert space $\mathcal H$ and $\|\phi_i\| \ge 1$ for all $i$. Then there exists $r=O(B)$ and a partition $\{I_1,\ldots, I_r\}$ of $I$ such that each $\{\phi_i\}_{i\in  I_j}$, $j=1,\ldots, r$, is a Riesz sequence with lower bound $\ve_0$. Moreover, if $\|\phi_i\| = 1$ for all $i$, then the upper bound is $\frac{1}{1-\delta_0}$.
\end{theorem}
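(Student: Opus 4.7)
My plan is to lift the finite-dimensional Theorem~\ref{fei} to the infinite-dimensional setting using the pinball principle of Theorem~\ref{pinball}, following the template of the proof of Theorem~\ref{main1}. Since $\mathcal{H}$ is separable and the Bessel condition together with $\|\phi_i\|\ge 1$ forces the indexing set to be at most countable, we may assume $I=\mathbb{N}$.

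First I would normalize to match the hypothesis of Theorem~\ref{fei}. Consider the rescaled sequence $\tilde\phi_i=\phi_i/\sqrt{B}$, which is a Bessel sequence with bound $1$ and satisfies $\|\tilde\phi_i\|^2\ge 1/B$. Apply Theorem~\ref{fei} to the finite truncation $\{\tilde\phi_i\}_{i=1}^n$ with $\varepsilon=1/B$ to produce, for every $n$, a partition $\{I_1^n,\ldots,I_r^n\}$ of $[n]$ into $r=O(1/\varepsilon)=O(B)$ blocks such that each $\{\tilde\phi_i\}_{i\in I_j^n}$ is a Riesz sequence with lower bound $\varepsilon_0/B$. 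Rescaling back by $\sqrt{B}$ multiplies Riesz bounds by $B$, so each $\{\phi_i\}_{i\in I_j^n}$ is a Riesz sequence with lower bound $\varepsilon_0$. In the unit-norm case $\|\phi_i\|=1$, we have $\|\tilde\phi_i\|^2=1/B=\varepsilon$ identically, so the addendum in Theorem~\ref{fei} yields upper Riesz bound $\varepsilon/(1-\delta_0)=1/(B(1-\delta_0))$ for the rescaled vectors, which becomes $1/(1-\delta_0)$ after undoing the rescaling.

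Next I would apply Theorem~\ref{pinball} to the sequence of partitions $\{I_1^n,\ldots,I_r^n\}_{n=1}^\infty$, which requires a common value of $r$; this is exactly what the previous step furnishes, since the bound $r=O(B)$ from Theorem~\ref{fei} depends only on $B$ and not on the truncation index. The pinball principle then produces a partition $\{I_1,\ldots,I_r\}$ of $\mathbb{N}$ with the property that, enumerating $I_j=\{j_1<j_2<\cdots\}$, every initial segment $\{j_1,\ldots,j_k\}$ is contained in some $I_j^{n_{j_k}}$, hence inherits the uniform Riesz bounds $\varepsilon_0$ (and $1/(1-\delta_0)$ in the unit-norm case).

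Finally, to promote these finite-segment Riesz estimates to the full infinite family $\{\phi_i\}_{i\in I_j}$, I would invoke the elementary observation recorded after Definition~\ref{def2} that the Riesz inequalities \eqref{R7} need only be verified for finitely supported coefficient sequences in $\ell^2(I_j)$; any such finite support lies in some initial segment $\{j_1,\ldots,j_k\}$ to which the finite Riesz inequality already applies with the same constants. The only step requiring genuine care is the uniformity of $r$ across all truncations $n$, without which the pinball principle does not apply — but this is guaranteed by the quantitative form of Theorem~\ref{fei}, so no serious obstacle arises.
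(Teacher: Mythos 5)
Your proposal is correct and follows essentially the same route as the paper: rescale by $1/\sqrt{B}$, apply Theorem~\ref{fei} to the finite truncations (noting that $r=O(B)$ is uniform in $n$), combine the partitions via the pinball principle of Theorem~\ref{pinball}, and pass from initial segments to the full index sets using the fact that the Riesz inequalities need only be checked on finitely supported coefficient sequences. The only (harmless) difference is that the paper first reduces to the unit-norm case, whereas you run the general case through Theorem~\ref{fei} with $\ve=1/B$ directly.
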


\begin{proof}
Without loss of generality we can assume that the index set $I=\mathbb N$ and $\|\phi_i\| = 1$ for all $i$. For every $n\in\N$, we apply Theorem \ref{fei} to the Bessel sequence $\{\frac1{\sqrt{B}}\phi_i\}_{i=1}^n$ to obtain a partition $\{I^n_1,\ldots, I^n_r\}$ of $[n]$ such that each subsequence $\{\frac{1}{\sqrt{B}}\phi_i\}_{i\in I_j^n}$ is a Riesz sequence with bounds $\ve_0/B$ and $\frac{1}{B(1-\delta_0)}$. The size of this partition is some fixed $r =O(B)$ as $B\to \infty$. Let $\{I_1, \ldots, I_r\}$ be the partition of $\N$ obtained by applying Theorem \ref{pinball} to the sequence of partitions $\{I_1^n,\ldots, I_r^n\}_{n=1}^\infty$. For any $j\in [r]$, we write $I_j = \{j_1 < j_2 < \cdots\}$. By Theorem \ref{pinball}(ii) for any $k\in \N$, $\{\phi_{j_1},\ldots, \phi_{j_k}\}$ is a Riesz sequence with bounds $\ve_0$ and $\frac{1}{1-\delta_0}$.
Since $k\in\N$ is arbitrary, we conclude that $\{\phi_i\}_{i\in I_j}$ is a Riesz sequence with the same bounds for every $j=1,\ldots,r$. 
\end{proof}

It is also worth investigating the size of a partition if we insist on having nearly tight Riesz sequences. That is, Riesz sequences with bounds $1-\ve$ and $1+\ve$, where $\ve>0$ is an arbitrarily small parameter. That is, we are asking for the size of partition in $R_\ve$ Conjecture that was shown by Casazza and Tremain \cite{CT} to be equivalent with the Kadison--Singer problem.

\begin{theorem} \label{fei-r}
Suppose that $\{\phi_i\}_{i\in I}$ is a unit norm Bessel sequence with bound $B$ for a separable Hilbert space $\mathcal H$. Then for any $\ve>0$ there exists a partition $\{I_1,\ldots, I_r\}$ of $I$ of size $r=O(B/\ve^4)$, such that each $\{\phi_i\}_{i\in  I_j}$, $j=1,\ldots, r$, is a Riesz sequence with bounds  $1-\ve$ and $1+\ve$.
\end{theorem}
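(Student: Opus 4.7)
We follow a two-step partition strategy, applying Theorem~\ref{MSS} twice in conjunction with the Naimark complement identities of Proposition~\ref{ce}. After reducing to the finite-dimensional case (the infinite-dimensional reduction uses the pinball principle, Theorem~\ref{pinball}, exactly as in the proofs of Theorems~\ref{main1} and~\ref{fei-infty}), we first perform a coarse partition: rescale $u_i = \phi_i/\sqrt{B}$, so that $\{u_i\}$ is a Bessel sequence with bound $1$ and $\|u_i\|^2 = 1/B$, and apply Theorem~\ref{MSS} with $r_1 = \ceil{C_1 B/\ve^2}$ parts for a suitable universal constant $C_1$. This produces a partition $\{\tilde I_j\}_{j=1}^{r_1}$ of $I$ such that each $\{\phi_i\}_{i\in \tilde I_j}$ is a Bessel sequence with bound $B_1 = (1+\sqrt{B/r_1})^2 \le 1 + \ve/2$. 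In particular, the upper Riesz bound $\le 1+\ve$ is automatically inherited by every subsequence produced in the next step, so the task reduces to obtaining a lower Riesz bound $\ge 1-\ve$ inside each $\tilde I_j$.

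Next, fix $j$ and set $\tilde\phi_i = \phi_i/\sqrt{B_1}$, so that $\|\tilde\phi_i\|^2 = 1/B_1$ and $\{\tilde\phi_i\}_{i \in \tilde I_j}$ has Bessel bound $1$. Mimicking the proof of Corollary~\ref{PM}, use the Schur--Horn theorem to extend this family to a Parseval frame $\{\tilde\phi_i\}_{i\in \hat I_j}$ in a larger Hilbert space, choosing the added vectors to have uniform norm. By Naimark's theorem, $\tilde\phi_i = P e_i$ for an orthogonal projection $P$, and then the Naimark complement satisfies $\|(I-P)e_i\|^2 \le (B_1-1)/B_1 \le (\ve/2)/(1+\ve/2)$ on the original indices (with a comparably small quantity on the extension indices). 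Now apply Theorem~\ref{MSS} a second time to the Parseval frame $\{(I-P)e_i\}_{i \in \hat I_j}$, with parameter $\delta = O(\ve)$ and $r_2 = \ceil{C_2/\ve^2}$ parts, to obtain a partition $\{J_{j,k}\}_{k=1}^{r_2}$ of $\hat I_j$ such that each $\{(I-P)e_i\}_{i\in J_{j,k}}$ is Bessel with bound at most $1-(1-\ve)/B_1$. Exchanging the roles of $P$ and $I-P$ in Proposition~\ref{ce}, each $\{\tilde\phi_i\}_{i\in J_{j,k}}$ is a Riesz sequence with lower bound at least $(1-\ve)/B_1$; restricting to $J_{j,k}\cap \tilde I_j$ and rescaling by $\sqrt{B_1}$ then gives $\{\phi_i\}_{i\in J_{j,k}\cap \tilde I_j}$ as a Riesz sequence with bounds in $[1-\ve, 1+\ve]$.

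Combining the two stages produces a partition of total size $r = r_1 r_2 = O(B/\ve^2)\cdot O(1/\ve^2) = O(B/\ve^4)$, as claimed. The most delicate point is the second step: the Schur--Horn extension must be arranged so that the Naimark-complement vectors all have norm squared of order $\ve$, and the constants in the second application of Theorem~\ref{MSS} must be tracked carefully to confirm that $r_2 = O(1/\ve^2)$ parts indeed suffice to push the Bessel bound of $\{(I-P)e_i\}_{i\in J_{j,k}}$ below the quantitative threshold $1 - (1-\ve)/B_1 \asymp \ve$ demanded by Proposition~\ref{ce}.
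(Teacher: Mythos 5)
Your proposal is correct, but it reaches the lower Riesz bound by a genuinely different mechanism than the paper. The paper's proof first invokes Theorem~\ref{fei-infty} to get $O(B)$ pieces that are already Riesz sequences with the universal bounds $\ve_0$ and $\tfrac{1}{1-\delta_0}$, and then, for each piece, applies Theorem~\ref{MSS} \emph{twice} --- once to the piece itself and once to its biorthogonal dual system --- with $O(1/\ve^2)$ parts each, so that both upper Riesz bounds drop to $1+\ve$; Lemma~\ref{subriesz} then converts the dual's upper bound into the primal's lower bound $1/(1+\ve)\ge 1-\ve$, and the common refinement of the three partitions gives $O(B)\cdot O(1/\ve^2)\cdot O(1/\ve^2)=O(B/\ve^4)$. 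You instead get the upper bound $1+\ve$ in one shot with $r_1=O(B/\ve^2)$ parts, and then obtain the lower bound by the Schur--Horn/Naimark route of Corollary~\ref{PM}: after rescaling by $\sqrt{B_1}$ the complement vectors have $\|(\mathbf I-P)e_i\|^2=1-1/B_1\le \ve/2$ (and the extension vectors can be given norm squared $C\ge 1-\ve/2$ by taking $N$ large), and Proposition~\ref{ce} with the roles of $P$ and $\mathbf I-P$ exchanged turns a Bessel bound $1-(1-\ve)/B_1$ for the complement into the desired lower Riesz bound; all the quantitative checks you flag do go through. One observation worth recording: because the complement vectors have norm squared $O(\ve)$ and the target Bessel bound $1-(1-\ve)/B_1$ is at least $\ve$, the inequality $\bigl(\tfrac{1}{\sqrt{r_2}}+\sqrt{\ve/2}\bigr)^2\le\ve$ already holds with $r_2=O(1/\ve)$ rather than the $O(1/\ve^2)$ you allot, so your scheme, optimized, actually yields a partition of size $O(B/\ve^3)$ --- an improvement on the paper's $O(B/\ve^4)$ that is consistent with (and relevant to) the open problem in the paper's closing remark about the optimal $\ve$-dependence. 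As written, your choice $r_2=O(1/\ve^2)$ is merely non-optimal, not wrong, and the claimed bound $O(B/\ve^4)$ follows.
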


In the proof of Theorem \ref{fei-r} we will use the following lemma. The case when $J=I$ is a well-known fact, see \cite[Section 3.6]{Ch}. For the sake of completeness we will give the proof of Lemma \ref{subriesz}.

\begin{lemma}\label{subriesz} 
Suppose $\{\phi_i\}_{i\in I}$ is a Riesz basis in a Hilbert space $\mathcal H$, and let $\{\phi^*_i\}_{i\in I}$ be its unique biorthogonal (dual) Riesz basis, i.e.,
\[
\lan \phi_i,\phi_j^* \ran =\delta_{i,j} \qquad\text{for all }i,j\in I.
\]
Then, for any subset $J\subset I$, the Riesz sequence bounds of $\{\phi_i\}_{i\in J}$ are $A$ and $B$ if and only if the Riesz sequence bounds of $\{\phi^*_i\}_{i\in J}$ are $1/B$ and $1/A$.
\end{lemma}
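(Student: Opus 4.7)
The plan is to translate both sides of the equivalence into spectral conditions on synthesis operators and then exploit biorthogonality as an operator identity. Define $T, T' : \ell^2(J) \to \mathcal H$ by $T((a_i)_{i\in J}) = \sum_{i\in J} a_i \phi_i$ and $T'((a_i)_{i\in J}) = \sum_{i\in J} a_i \phi_i^*$; these are bounded because $\{\phi_i\}_{i\in I}$ and $\{\phi_i^*\}_{i\in I}$ are Riesz bases of $\mathcal H$. The Riesz sequence bounds of $\{\phi_i\}_{i\in J}$ are precisely $\inf \mathrm{spec}(T^*T)$ and $\sup \mathrm{spec}(T^*T)$, and similarly for $T'$, so the lemma reduces to the spectral assertion that $\mathrm{spec}(T^*T) \subset [A,B]$ if and only if $\mathrm{spec}((T')^* T') \subset [1/B, 1/A]$.

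The first step is to convert biorthogonality into operator identities. Evaluating on standard basis vectors and using $\langle \phi_i, \phi_j^* \rangle = \delta_{ij}$ for $i,j \in J$ gives
\[
T^* T' = (T')^* T = I_{\ell^2(J)}.
\]
Since biorthogonality is symmetric between $\{\phi_i\}$ and $\{\phi_i^*\}$, the reverse implication in the ``iff'' reduces to the forward one by interchanging the roles of the two sequences, so I would assume $A\,I \le T^* T \le B\,I$ and aim to derive $(1/B)\,I \le (T')^* T' \le (1/A)\,I$.

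The lower bound $(T')^* T' \ge (1/B)\,I$ is immediate from the first identity: for any $a \in \ell^2(J)$,
\[
\|a\|^2 = \|T^* T' a\|^2 \le \|T\|^2 \|T' a\|^2 \le B \|T' a\|^2,
\]
giving $\|T' a\|^2 \ge \|a\|^2 / B$. The step I expect to be the main obstacle is the matching upper bound $(T')^* T' \le (1/A)\,I$: running the same trick in reverse starting from $(T')^* T = I$ yields only $(T')^* T' \ge (1/A)\,I$, which is the wrong direction. To close the argument one must exploit the global Riesz-basis structure, namely that $\{\phi_i^*\}_{i\in I}$ is dual to $\{\phi_i\}_{i\in I}$, so the full synthesis operators $U_0, V_0 : \ell^2(I) \to \mathcal H$ satisfy $V_0 = (U_0^*)^{-1}$. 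Restricting this global operator identity to the $J$-coordinates and combining with the two biorthogonality identities above should upgrade the one-sided inequality to the desired two-sided reciprocal spectral bound, thereby completing the proof.
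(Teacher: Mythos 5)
Your lower-bound half is correct and is essentially the paper's own argument recast in operator language: the identity $T^*T'=I_{\ell^2(J)}$ together with $\|T\|^2\le B$ gives $\|a\|^2\le B\|T'a\|^2$, i.e.\ an upper Riesz bound $B$ for $\{\phi_i\}_{i\in J}$ forces the lower Riesz bound $1/B$ for $\{\phi_i^*\}_{i\in J}$ (and, exchanging the two biorthogonal families, an upper bound $1/A$ for $\{\phi_i^*\}_{i\in J}$ forces the lower bound $A$ for $\{\phi_i\}_{i\in J}$). But the step you defer --- deducing $(T')^*T'\le (1/A)\,I$ from $T^*T\ge A\,I$ --- is not merely the hard part: it is false for proper subsets $J$, so no appeal to the global identity $V_0=(U_0^*)^{-1}$ can close it. Concretely, in $\C^2$ take $\phi_1=e_1$ and $\phi_2=e_1+te_2$ with $t$ small; then $\phi_1^*=e_1-t^{-1}e_2$ and $\phi_2^*=t^{-1}e_2$. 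For $J=\{1\}$ the singleton $\{\phi_1\}$ has optimal Riesz bounds $A=B=1$, while $\|\phi_1^*\|^2=1+t^{-2}$, which blows past the claimed upper bound $1/A=1$ (read backwards, the same example kills the converse implication as well). What the global structure actually yields is $(T')^*T'=\bigl((U_0^*U_0)^{-1}\bigr)_{JJ}\succeq \bigl((U_0^*U_0)_{JJ}\bigr)^{-1}=(T^*T)^{-1}$, because a principal submatrix of an inverse is the inverse of a Schur complement; this is a one-sided operator inequality and only reproduces the lower bound you already have.

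You are in good company: the lemma as printed is an ``if and only if'' that is too strong, and the paper's proof (like yours) genuinely establishes only the two one-sided implications ``upper Riesz bound of one family $\Rightarrow$ reciprocal lower Riesz bound of the biorthogonal family.'' That weaker pair is exactly what is needed in the proof of Theorem \ref{fei-r}: there both $\{\phi_i\}_{i\in I_j}$ and $\{\phi_i^*\}_{i\in I_j}$ are separately given upper Riesz bounds $1+\ve$ by Theorem \ref{MSS}, and Lemma \ref{subriesz} is invoked only to convert each upper bound into a lower bound $1/(1+\ve)$ for the other family (the full-index-set case $J=I$, where the reciprocity of bounds does hold since the Gram operators are genuine inverses of each other, is used once at the start). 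So the right repair of your write-up is not to hunt for the missing argument but to weaken the statement to these two implications --- which your $T^*T'=I$ computation already proves.
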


\begin{proof} Suppose that $\{\phi_i\}_{i\in J}$ has upper Riesz bound $B$. This is equivalent to the Bessel condition
\begin{equation}\label{sr2}
\sum_{i\in J} |\lan f, \phi_i \ran |^2 \le B ||f||^2
\qquad\text{for all }f\in \mathcal H.
\end{equation}
For any sequence $\{a_i\}_{i\in J} \in \ell^2$, there exists a unique $f\in \mathcal H$ such that
\[
\lan f, \phi_i \ran = \begin{cases} a_i & i\in J,\\
0 & \text{otherwise}.
\end{cases}
\]
Since $f= \sum_{i\in J} a_i \phi^*_i$, by \eqref{sr2} we have
\[
\bigg\| \sum_{i\in J} a_i \phi^*_i \bigg\|^2 = ||f||^2 \ge \frac{1}{B} \sum_{i\in J} |\lan f, \phi_i \ran |^2 = \frac{1}{B} \sum_{i\in J} |a_i|^2.
\]
Conversely, if $\{\phi^*_i\}_{i\in J}$ has lower Riesz bound $1/B$, then \eqref{sr2} holds and $\{\phi_i\}_{i\in J}$ has upper Riesz bound $B$. 
By symmetry, $\{\phi^*_i\}_{i\in J}$ has upper Riesz bound $1/A$ if and only if $\{\phi_i\}_{i\in J}$ has lower Riesz bound $A$, which completes the proof of the lemma.
\end{proof}

\begin{proof}[Proof of Theorem \ref{fei-r}] By the ``pinball principle'' as in the proof of Theorems \ref{main1} and  \ref{fei-infty} it suffices to restrict our attention to the finite dimensional case.

In the first step we apply Theorem \ref{fei-infty} to find a partition of size $O(B)$ into Riesz sequences with bounds $\ve_0$ and $\frac1{1-\delta_0}$. Suppose that $\{\phi_i\}_{i=1}^M$ is one of these, i.e., a unit-norm Riesz sequence with bounds $\ve_0$ and $\frac1{1-\delta_0}$. Let $\{\phi^*_i\}_{i=1}^M$ be its unique biorthogonal (dual) Riesz sequence in $\mathcal H'=\operatorname{span}\{\phi_i\}_{i=1}^M$ that has bounds $1-\delta_0$ and $\frac{1}{\ve_0}$ by Lemma \ref{subriesz}. In the second step we apply Theorem \ref{MSS} to both of these Riesz sequences to reduce their upper Riesz bounds to $1+\ve$. This requires partitions of size $\tilde r=O(1/\ve^2)$ since we need to guarantee that
\[
\bigg(\frac{1}{\sqrt{\tilde r}}+1\bigg)^2 \le 1+\ve.
\]
Now it suffices to consider a common refinement partition $I_1,\ldots, I_r$ of size $r=O(B/\ve^4)$ of a partition in the first step and two partitions in the second step. For any $j=1,\ldots, r$, $\{\phi_i\}_{i\in I_j}$ is a Riesz sequence with upper bound $1+\ve$. Since $\{\phi^*_i\}_{i\in I_j}$ is also a Riesz sequence with the same upper bound, Lemma \ref{subriesz} implies that $\{\phi_i\}_{i\in I_j}$ has lower bound $1/(1+\ve) \ge 1 - \ve$. This completes the proof of Theorem \ref{fei-r}.  
\end{proof}

\begin{remark} It is an open problem what is the optimal dependence of the size of the partition on $\ve$. The linear dependence on $B$ is optimal, but it is less clear whether one can reduce dependence on $\ve$ from $O(1/\ve^4)$ to some lower exponent. This problem is closely related with finding the optimal size of partition in Anderson's Paving Conjecture, see \cite[Theorem 6.1]{MSS}. It is known \cite[Theorem 6]{CEKP} that size of partition must be at least $O(1/\ve^2)$ as $\ve \to 0$.
\end{remark}

Repeating the standard arguments as in \cite{CCLV}, Theorem \ref{fei-infty} yields the same asymptotic bounds on the size of partition for Bourgain--Tzafriri Conjecture.

\begin{theorem} There exist universal constants $c,\ve_0, \delta_0>0$ so that for any $B>1$ the following holds. Suppose $T: \mathcal H \to \mathcal H$ is a linear operator  with norm $\|T\| \le \sqrt{B}$ and $\|Te_i\|=1$ for all $i\in I$, where $\{e_i\}_{i\in I}$ is an orthonormal basis of a separable Hilbert space $\mathcal H$. 
Then, there exists a partition $\{I_1,\ldots, I_r\}$ of the index set $I$ of size $r\le cB$, so that for all $j=1,\ldots, r$ and all choice of scalars $\{a_i\}_{i\in I_j} \in \ell^2$ we have
\[
\ve_0 \sum_{i\in I_j} |a_i|^2 \le \bigg\| \sum_{i\in I_j} a_i Te_i \bigg\|^2 \le \frac{1}{1-\delta_0} \sum_{i\in I_j} |a_i|^2.
\]
\end{theorem}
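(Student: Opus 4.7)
The plan is to recognize that the hypotheses of this theorem are precisely set up to apply Theorem~\ref{fei-infty} directly, so the proof reduces to a short transfer from the operator $T$ to the Bessel sequence $\{Te_i\}_{i\in I}$.

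First I would verify that $\{Te_i\}_{i\in I}$ is a unit-norm Bessel sequence with bound $B$. The unit-norm condition is immediate from the hypothesis $\|Te_i\| = 1$. For the Bessel bound, note that for any $x \in \mathcal H$,
\[
\sum_{i\in I} |\langle x, Te_i \rangle|^2 = \sum_{i\in I} |\langle T^*x, e_i \rangle|^2 = \|T^*x\|^2 \le \|T^*\|^2 \|x\|^2 = \|T\|^2 \|x\|^2 \le B \|x\|^2,
\]
using that $\{e_i\}_{i\in I}$ is an orthonormal basis and Parseval's identity in the first equality.

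Next I would apply Theorem~\ref{fei-infty} to the sequence $\{Te_i\}_{i\in I}$ with Bessel bound $B$ and unit norms. This produces a partition $\{I_1, \ldots, I_r\}$ of $I$ with $r = O(B)$ such that for each $j = 1,\ldots, r$, the family $\{Te_i\}_{i\in I_j}$ is a Riesz sequence with lower bound $\varepsilon_0$ and upper bound $\tfrac{1}{1-\delta_0}$. Translating the Riesz sequence inequalities (Definition~\ref{def2}) into the claimed form gives, for any $\{a_i\}_{i\in I_j} \in \ell^2$,
\[
\varepsilon_0 \sum_{i\in I_j} |a_i|^2 \le \bigg\| \sum_{i\in I_j} a_i T e_i \bigg\|^2 \le \frac{1}{1-\delta_0} \sum_{i\in I_j} |a_i|^2,
\]
which is exactly the desired conclusion with the universal constant $c$ absorbing the implicit constant from the $O(B)$ bound in Theorem~\ref{fei-infty}.

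Since the bulk of the work has already been done in Theorem~\ref{fei-infty} (which in turn rests on Theorem~\ref{fei} and the pinball principle), there is essentially no obstacle here; the only step that requires any thought is noticing that $\|T^*\| = \|T\|$ is the right quantity to use when checking the Bessel condition, rather than attempting to estimate $\sum \|Te_i\|^2$ directly, which need not be finite in the infinite-dimensional setting.
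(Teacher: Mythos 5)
Your proposal is correct and matches the paper's intent: the paper itself derives this theorem by observing that $\{Te_i\}_{i\in I}$ is a unit-norm Bessel sequence with bound $\|T\|^2\le B$ and then invoking Theorem~\ref{fei-infty}, exactly as you do. Your verification of the Bessel bound via $\sum_i|\langle x,Te_i\rangle|^2=\|T^*x\|^2\le\|T\|^2\|x\|^2$ is the standard argument the paper alludes to.
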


As a consequence of our results we obtain explicit bounds on the partition size for Fourier frames. If $E \subset [0,1]$ has positive Lebesgue measure, then the collection of functions $\phi_n(t)=e^{2\pi i nt} \chi_E(t)$, $n\in \Z$, is a Parseval frame for $L^2(E)$, often called a Fourier frame. Since this is an equal norm frame, i.e., $||\phi_n||^2=|E|$ for all $n\in \Z$, Theorem \ref{fei-infty} yields the following corollary. Moreover, by the results of Lawton \cite{La} and Paulsen \cite[Theorem 1.2]{Pa}, the index sets can be chosen to be a syndetic set. Recall that $I\subset \Z$ is syndetic if for some finite set $F\subset \Z$ we have
\[
\bigcup_{n\in F} (I-n) = \Z.
\]

\begin{corollary} There exists a universal constant $c>0$ such that for any subset $E \subset [0,1]$ with positive measure, the corresponding Fourier frame $\{\phi_n\}_{n\in\Z}$ can be decomposed as the union of $r \le c/|E|$ Riesz sequences $\{\phi_n\}_{n\in I_j}$, $j=1,\ldots,r$. Moreover, each index set $I_j$ can be chosen to be a syndetic set.
\end{corollary}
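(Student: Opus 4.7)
The plan is to apply Theorem \ref{fei-infty} to a suitably rescaled version of the Fourier frame, and then invoke the results of Lawton \cite{La} and Paulsen \cite[Theorem 1.2]{Pa} to upgrade the resulting partition to one whose blocks are syndetic, without inflating the number of blocks beyond a constant factor.

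First, I note that $\{\phi_n\}_{n\in\Z}$ is a Parseval frame for $L^2(E)$, so in particular a Bessel sequence with bound $1$, and $\|\phi_n\|^2 = |E|$ for every $n$. To meet the unit-norm hypothesis of Theorem \ref{fei-infty}, I rescale by setting $\tilde\phi_n = \phi_n/\sqrt{|E|}$. The family $\{\tilde\phi_n\}_{n\in\Z}$ is then a unit-norm Bessel sequence in $L^2(E)$ with bound $B = 1/|E|$. Applying Theorem \ref{fei-infty} produces a partition $\{I_1,\ldots,I_r\}$ of $\Z$ with $r = O(B) = O(1/|E|)$ such that each $\{\tilde\phi_n\}_{n\in I_j}$ is a Riesz sequence with bounds $\ve_0$ and $1/(1-\delta_0)$. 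Scalar multiplication by $\sqrt{|E|}$ rescales both Riesz bounds by a factor of $|E|$ but preserves the Riesz property, so the same partition $\{I_1,\ldots,I_r\}$ works for the original Fourier frame $\{\phi_n\}_{n\in\Z}$. This proves the first assertion with a universal constant $c>0$ independent of $E$.

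For the syndetic refinement, I invoke \cite[Theorem 1.2]{Pa} together with \cite{La}. Specialized to our setting, these results assert that any index set of $\Z$ supporting a Riesz subsequence of a Fourier frame can be arranged to be syndetic while preserving the Riesz property, essentially by a shift-averaging/translation argument exploiting the homogeneity of the frame under translation in $\Z$. Applying this upgrade to each block $I_j$ in the partition produced above, and then combinatorially refining to recover a genuine partition of $\Z$, yields the desired syndetic partition.

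The step I expect to be the main obstacle is the quantitative tracking in this last upgrade: replacing each $I_j$ by a syndetic set still indexing a Riesz subsequence could in principle require further subdivision, potentially inflating the partition size. The key point is that the construction of \cite{Pa,La} depends only on the Riesz bounds of the sequence (which are universal $\ve_0$ and $1/(1-\delta_0)$ in our setting, independent of $E$) and not on the particular measurable set $E$. Consequently, the syndetic refinement increases the number of pieces by only a universal constant factor, so the final partition still satisfies $r \le c/|E|$ for a universal $c$.
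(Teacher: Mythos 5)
Your proposal is correct and follows essentially the same route as the paper: rescale the equal-norm Fourier frame to unit norm (Bessel bound $B=1/|E|$), apply Theorem \ref{fei-infty} to get $r=O(1/|E|)$ Riesz sequences, and cite Lawton and Paulsen for syndeticity. The only minor point is that Paulsen's Theorem 1.2 converts a partition into $K$ Riesz basic sequences directly into a partition into $K$ syndetic Riesz basic sequences, so no further subdivision or constant-factor loss is needed at that step.
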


\bibliographystyle{amsplain}

\end{document}